\newtheorem{theorem}{Theorem}[section]
\newtheorem{lemma}[theorem]{Lemma}
\newtheorem{corollary}[theorem]{Corollary}
\newtheorem{proposition}[theorem]{Proposition}
\theoremstyle{definition}
\newtheorem{definition}[theorem]{Definition}
\theoremstyle{remark}
\newtheorem{remark}[theorem]{Remark}
\newtheorem {question}[theorem]{Question}
\numberwithin{equation}{section}
\newcommand{\NN}{\mathbb{N}}
\newcommand{\D}{\mathcal{D}}
\newcommand{\C}{\mathcal{C}}
\newcommand{\V}{\mathcal{V}}
\newcommand{\U}{\mathcal{U}}
\newcommand{\W}{\mathcal{W}}
\renewcommand{\P}{\mathcal{P}}
\newcommand{\RCA}{\mathsf{RCA}}
\newcommand{\ACA}{\mathsf{ACA}}
\newcommand{\WKL}{\mathsf{WKL}}
\newcommand{\ATR}{\mathsf{ATR}}
\newcommand{\PIOOCA}{\Pi^1_1\mbox{-}\mathsf{CA}}
\newcommand{\ISig}{\mathsf{I}\Sigma^0}
\newcommand{\BSig}{\mathsf{B}\Sigma^0}
\newcommand{\HT}{\mathsf{HT}}
\newcommand{\CT}{\mathsf{CT}}
\newcommand{\dom}{\operatorname{dom}}
\newcommand{\Orb}{\mathsf{Orb}}
\newcommand{\flim}[1]{#1\mbox{-}\lim}
\newcommand{\FS}{\operatorname{FS}}
\newcommand{\FU}{\operatorname{FU}}
\newcommand{\VW}{\operatorname{V}}
\newcommand{\Wop}{\operatorname{W}}
\renewcommand{\setminus}{\smallsetminus}
\newcommand{\FUT}{\mathsf{FUT}}
\newcommand{\IFUT}{\mathsf{IFUT}}
\newcommand{\varp}{\star}
\newcommand{\FIN}{\operatorname{FIN}}
\newcommand{\fulim}{\flim{\operatorname{FU}^{\leq 2}}}
\title[The RM of Carlson's theorem for located words]{The reverse mathematics of\\Carlson's theorem for located words}
\author{Tristan Bompard}
\email{tristanbompard2.7@gmail.com}
\author{Lu Liu}
\address{School of Mathematics and Statistics, HNP-LAMA,
Central South University,
City Changsha, Hunan Province,
China. 410083}
\email{g.jiayi.liu@gmail.com}
\author{Ludovic Patey}
\address{CNRS, Équipe de Logique\\Universit\'e de Paris\\ Paris, FRANCE}
\email{ludovic.patey@computability.fr}
\urladdr{http:/ludovicpatey.com}
\thanks{}
\begin{document}

\begin{abstract}
In this article, we give two proofs of Carlson's theorem for located words in~$\ACA^+_0$.
The first proof is purely combinatorial, in the style of Towsner's proof of Hindman's theorem. The second uses topological dynamics to show that an iterated version of Hindman's theorem for bounded sums implies Carlson's theorem for located words. %We also show the equivalence between the iterated version of Carlson's theorem and of Hindman's theorem with variants of the Auslander-Ellis theorem.
\end{abstract}

\maketitle

\section{Introduction}

We study the metamathematics of a partition theorem for words due to Carlson~\cite{carlson1988some} from the viewpoint of reverse mathematics. Also our motivation is foundational, the metamathematical study of theorems in combinatorics usually consists in seeking for combinatorially simpler proofs of existing theorems. This is in particular the case of this article, where we give two new proofs of Carlson's theorem: a direct purely combinatorial one in the style of Towsner~\cite{towsner2012simple}, and another from the Finite Union theorem using the tools of topological dynamics. This article might therefore be of interest to both combinatoricians who can ignore the metamathematical considerations, and logicians who wonder about the optimal axioms to prove Carlson's theorem.

\subsection{Reverse mathematics}
This is a foundational program started in 1974 by Harvey Friedman, whose goal is to study the optimal axioms for proving ordinary theorems. It uses the framework of subsystems of second-order arithmetics, with a base theory, $\RCA_0$, capturing \emph{computable mathematics}. The early study of reverse mathematics has shown the emergence of four systems of axioms, namely, $\WKL_0$, $\ACA_0$, $\ATR_0$ and $\PIOOCA_0$, listed in increasing order in terms of logical strength, such that most theorems are either provable in~$\RCA_0$ (hence computably true), or equivalent modulo~$\RCA_0$ to one of these four systems. This observation is known as the \emph{Big Five phenomenon}. See Simpson~\cite{simpson2009subsystems} for a reference book on the early reverse mathematics.

The study of combinatorial theorems, especially coming from Ramsey's theory, has relativized the Big Five phenomenon. Ramsey's theorem for pairs is the most famous example of theorem which fails this observation in a strong sense: its logical strength is strictly in between $\RCA_0$ and $\ACA_0$ (see Specker~\cite{specker1971ramseys}, Jockusch~\cite{jockusch1972ramseys} and Seetapun and Slaman~\cite{seetapun1995strength}), and incomparable with~$\WKL_0$ (see Jockusch~\cite{jockusch1972ramseys} and Liu~\cite{liu2012rt2_2}). Combinatorial theorems are notoriously hard to study in reverse mathematics, and often require to find new elementary proofs of existing theorems. This is in particular the case of Hindman's theorem, whose logical strength is still an active study of research. See Hirschfeldt~\cite{hirschfeldt2015slicing} or Dzhafarov and Mummert~\cite{dzhafarov2022reverse} for an introduction to the  reverse mathematics of combinatorial theorems.

\subsection{Hindman's theorem}

Hindman's theorem~\cite{hindman1974finite} is a partition theorem about the integers.
Given a set~$X \subseteq \NN$, we let $\FS(X)$ be the set of non-empty finite sums of distinct elements from~$X$, that is,
$$
\FS(X) = \{ \Sigma F : F \subseteq X \wedge 0 < |F| < \infty \}
$$

A \emph{finite coloring} of a set~$X$ is a function of the form $X \to C$, where~$C$ is a finite set, identified as a set of colors.
Given a finite coloring~$f : X \to C$, we say that a subset~$Y \subseteq X$ is \emph{$f$-homogeneous} if every element in~$Y$ is given the same color by~$f$.

\begin{theorem}[Hindman]\label[theorem]{thm:hindman}
For every finite coloring $f : \NN \to C$, there is an infinite set~$Y \subseteq \NN$ such that
$\FS(Y)$ is $f$-homogeneous.
\end{theorem}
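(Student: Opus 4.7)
The plan is to prove Hindman's theorem via the classical Galvin--Glazer argument, which lifts the finite combinatorial problem to a fixed-point problem on the Stone--\v{C}ech compactification $\beta\NN$ of $\NN$ and then extracts an explicit monochromatic sequence from a single idempotent ultrafilter.

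First I would equip $\beta\NN$, the space of ultrafilters on $\NN$, with the topology generated by basic clopen sets $\{p : A \in p\}$ for $A \subseteq \NN$, making it a compact Hausdorff space into which $\NN$ embeds as the principal ultrafilters. I would then extend the addition on $\NN$ by setting
$$p + q = \{A \subseteq \NN : \{n \in \NN : A - n \in q\} \in p\}, \qquad A - n = \{m : m + n \in A\},$$
and verify that $(\beta\NN,+)$ is an associative semigroup whose left-multiplication maps $q \mapsto p + q$ are all continuous. The closed sub-semigroup $\beta\NN \setminus \NN$ of non-principal ultrafilters is also easily seen to be closed under $+$.

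Next I would invoke Ellis's lemma: a minimal closed sub-semigroup of $\beta\NN \setminus \NN$, extracted by Zorn's lemma, consists entirely of idempotents, yielding a non-principal $p$ with $p = p + p$. Given the finite coloring $f : \NN \to C$, ultrafilter-additivity forces exactly one class $A := f^{-1}(c)$ to lie in $p$. The key structural fact, a direct unpacking of $A \in p + p$, is that the \emph{starred} subset $A^\star := \{n \in A : A - n \in p\}$ also belongs to $p$ and satisfies $A^\star - n \in p$ for every $n \in A^\star$.

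Finally I would perform the standard recursion: pick $y_0 \in A^\star$; having selected $y_0 < \dots < y_k \in A^\star$ with $\FS(\{y_0, \dots, y_k\}) \subseteq A^\star$, intersect $A^\star$ with $A^\star - s$ over all $s \in \FS(\{y_0, \dots, y_k\})$ to obtain a set still in $p$, and choose $y_{k+1}$ strictly above $y_k$ from this intersection (possible since $p$ is non-principal, hence every set in $p$ is infinite). Then $\FS(\{y_i\}_{i \in \NN}) \subseteq A = f^{-1}(c)$, proving $f$-homogeneity. The main obstacle is the technical verification that $+$ is associative and left-continuous on $\beta\NN$; once this and Ellis's lemma are in hand, the combinatorics is mechanical. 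A more elementary alternative, closer to the reverse-mathematical perspective of this paper, would be Towsner's direct argument, which iteratively refines a sequence of colorings while maintaining finite-sum homogeneity without any appeal to ultrafilters.
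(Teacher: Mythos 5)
Your proposal is the classical Galvin--Glazer ultrafilter argument, and as ordinary mathematics it is correct: the idempotent $p = p + p$, the key lemma that $A^\star = \{n \in A : A - n \in p\}$ lies in $p$ and satisfies $A^\star - n \in p$ for every $n \in A^\star$, and the closing recursion are exactly the standard proof. One technical slip to fix: with the definition you display ($A \in p+q$ iff $\{n : A - n \in q\} \in p$), the continuous translations on $\beta\NN$ are the maps $p \mapsto p + q$ with the \emph{second} argument fixed, not $q \mapsto p + q$; for nonprincipal $p$ the latter is not continuous. This only means that Ellis's lemma must be run on the continuous side (work with $T + p$ and $\{q \in T : q + p = p\}$), or equivalently that the roles of $p$ and $q$ in the definition of $+$ should be swapped; either repair is routine, and the unpacking of $A \in p + p$ is unaffected since both arguments coincide there. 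Beyond this, note that the paper does not prove Hindman's theorem at all: it is quoted as background, and the paper's concern is which proofs can be formalized in weak subsystems of second-order arithmetic. From that standpoint your route is the least suitable of the known proofs, since $\beta\NN$, Zorn's lemma and idempotent ultrafilters are third-order, choice-dependent objects whose metamathematical content is only accessible indirectly (via the conservativity analyses of Montalb\'an--Shore and Kreuzer mentioned in the introduction). The alternative you name at the end --- Towsner's direct combinatorial refinement argument, or Hindman's original proof as analyzed by Blass, Hirst and Simpson --- is the one aligned with the paper's methods, gives the $\ACA^+_0$ upper bound, and is the style of argument the paper adapts to Carlson's theorem for located words.
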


We shall refer to the previous theorem as Hindman's theorem ($\HT$).
There exist multiple proofs of Hindman's theorem. The original proof from Hindman~\cite{hindman1974finite}, a short proof from Baumgartner~\cite{baumgartner1974short}, an ultrafilter proof from Galvin and Glazer (see Hindman and Strauss~\cite{hindman2012algebra}), a proof using topological dynamics by Furstenberg and Weiss~\cite{furstenberg1978topological}, and a simple proof by Towsner~\cite{towsner2012simple}.
The two first proofs were analyzed in reverse mathematics by Blass, Hirst, and Simpson~\cite{blass1987logical}. They showed that the original proof from Hindman holds in~$\ACA^+_0$, while Baumgartner's proof can be formalized in the much stronger system~$\Pi^1_2\mbox{-}\mathsf{TI}_0$. The analysis of the proofs from Galvin and Glazer and from Furstenberg and Weiss are more tricky, since they use third-order objects : ultrafilters in the former case, and Ellis enveloping semigroups in the latter case. Montalban and Shore~\cite{montalban2018conservativity} studied the ultrafilter proof using the tools of conservativity, and proved that the existence of an idempotent ultrafilter is a conservative extension of $\ACA_0$ augmented with an iterated version of Hindman's theorem. Kreuzer~\cite{kreuzer2013minimal} studied the existence of idempotent ultrafilters from the viewpoint of higher-order reverse mathematics, and proved that iterated Hindman's theorem is equivalent to the Auslander-Ellis theorem in topological dynamics. Lastly, the proof from Towsner holds in~$\ACA^+_0$. The exact reverse mathematical strength of Hindman's theorem remains one of the biggest open questions in reverse mathematics.

Hindman's theorem is a typical example where the quest for optimal axioms motivates the search for new elementary proofs of existing theorems.
This motivated for example the purely combinatorial proof by Towsner~\cite{towsner2012simple}, which isarguably elementary as it is obtain by combining and iterating only very simple constructions. On the other hand, reverse mathematics is sensitive to the operation of iterating constructions. Iterations are very elementary from a mathematical perspective, but this yields sometimes computationally complex objects which then require strong existence axioms.

Hindman's theorem is equivalent to a partition theorem about finite sets, known as the Finite Union theorem.
Let~$\P_f(\NN)$ be the set of all non-empty finite subsets of~$\NN$.
A subset~$X \subseteq \P_f(\NN)$ is a \emph{block sequence} if for every~$E, F \in X$, either $\max E < \min F$, or $\max F < \min E$. Given a set~$X \subseteq \P_f(\NN)$,
we write $\FU(X)$ for the set of all non-empty finite unions of elements from~$X$, that is,
$$
\FU(X) = \{ \cup F : F \subseteq X \wedge 0 < |F| < \infty \}
$$
\begin{theorem}[Finite Union]\label[theorem]{thm:finite-union}
For every finite coloring $f : \P_f(\NN) \to C$, there is an infinite block sequence~$Y \subseteq \P_f(\NN)$ such that
$\FU(Y)$ is $f$-homogeneous.
\end{theorem}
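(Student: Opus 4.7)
The plan is to derive the Finite Union theorem from Hindman's theorem (Theorem~\ref{thm:hindman}) via the standard binary representation bijection. To each finite set $F \in \P_f(\NN)$ I associate the integer $b(F) := \sum_{n \in F} 2^n$; this is a bijection from $\P_f(\NN)$ onto $\NN \setminus \{0\}$, whose inverse $S$ sends~$n$ to the set of positions of its $1$-bits. The key observation is that if $E, F \in \P_f(\NN)$ satisfy $\max E < \min F$, then $b(E) + b(F) = b(E \cup F)$ with no carries; consequently, whenever $X \subseteq \P_f(\NN)$ is a block sequence, $b$ restricts to a bijection between $\FU(X)$ and $\FS(b(X))$.

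Given a finite coloring $f : \P_f(\NN) \to C$, I would define $g : \NN \setminus \{0\} \to C$ by $g(n) := f(S_n)$ and apply Hindman's theorem to obtain an infinite set $Y \subseteq \NN \setminus \{0\}$ with $\FS(Y)$ $g$-homogeneous of some color~$c$. It then suffices to extract from $\FS(Y)$ an infinite sequence $z_0 < z_1 < \cdots$ for which $(S_{z_k})_k$ is a block sequence: any finite union of the $S_{z_k}$'s then equals $S_{z_{k_1} + \cdots + z_{k_r}}$, and since $z_{k_1} + \cdots + z_{k_r}$ lies in $\FS(Y)$ and therefore receives color~$c$, the corresponding union has $f$-color~$c$.

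The $z_k$ are built inductively. At stage $k$, set $M := \max S_{z_{k-1}}$ (with $M := -1$ at stage~$0$). Each $y$ in the remaining infinite portion of $Y$ is colored by its low part $S_y \cap [0, M]$, a finite coloring with at most $2^{M+1}$ colors; pigeonhole yields an infinite $Y' \subseteq Y$ on which this low part is constantly some $L \subseteq [0, M]$. Setting $v := \sum_{\ell \in L} 2^\ell$ and picking $r := 2^{M+1}$ distinct elements $y_1, \ldots, y_r \in Y'$, one writes each $y_i = v + h_i$ where $h_i$ is divisible by~$2^{M+1}$. Then $z_k := y_1 + \cdots + y_r = rv + \sum_i h_i$ is divisible by~$2^{M+1}$, so $\min S_{z_k} > M$, as required.

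The main obstacle is precisely this extraction step: a naïve passage to a subset of~$Y$ need not yield a block sequence (for instance if every element of $Y$ has a $1$ in bit position~$0$), which forces one to work inside $\FS(Y)$ and to choose the multiplicity~$r$ carefully so that the low-bit contributions cancel. From a reverse-mathematical standpoint, the reduction goes through in $\RCA_0$ augmented by Hindman's theorem, since the pigeonhole applied at each stage is on a finite coloring of an infinite set and demands no additional induction strength.
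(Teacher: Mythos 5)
Your proof is correct, and it follows the same overall strategy as the paper -- reducing the Finite Union theorem to Hindman's theorem through the binary correspondence $F \mapsto \sum_{n \in F} 2^n$ -- but the crucial extraction step rests on a genuinely different lemma. The paper routes the implication through strong Hindman's theorem and \Cref{lem:normalization-ip-set}: there one shows by induction that $\FS(X)$ contains elements whose least binary digit $\lambda$ is arbitrarily large (two finite sums sharing the same least digit add with a carry that clears it), and then greedily builds finite subsets of $X$ whose sums form a 2-apart set $\leq_{\FS} X$; this normalization is applied on the instance side, before Hindman's theorem is invoked. You instead post-process the Hindman solution $Y$: at each stage you find, by pigeonhole, $2^{M+1}$ unused elements of $Y$ agreeing modulo $2^{M+1}$, and their sum is divisible by $2^{M+1}$, so its binary support lies entirely above $M$. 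Both arguments produce the same object -- a 2-apart set $Z \leq_{\FS} Y$, i.e.\ one whose elements are sums of pairwise disjoint finite subsets of $Y$, so that finite unions of supports correspond to elements of $\FS(Y)$ -- but your multiplicity trick avoids the paper's induction on $\lambda$ and is arguably more direct, at the price of summing many terms rather than two. Two small points of care: the summands chosen at stage $k$ must indeed be disjoint from those used earlier (you say this, and it is exactly what guarantees $z_{k_1} + \cdots + z_{k_m} \in \FS(Y)$, besides excluding the degenerate summand $0$ at stage $0$); and the infinite pigeonhole you invoke at each stage, with a number of colors growing in $M$, is not literally available in $\RCA_0$ -- either replace it by the finite pigeonhole applied to sufficiently many elements of $Y$, which makes the whole extraction computable in $Y$ and provable in $\RCA_0$, or note that Hindman's theorem already implies $\ACA_0$ over $\RCA_0$, so the point is harmless for the reduction you claim.
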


The Finite Union theorem ($\FUT$) is known to be equivalent to Hindman's theorem over~$\RCA_0$, through the one-to-one correspondence which to~$n \in \NN$ associates the finite set~$F \in \P_f(\NN)$ such that $\sum_{i \in F} 2^i = n$. The Finite Union theorem is more convenient than Hindman's theorem from many viewpoints, as the set~$\FU(Y)$ is closed under finite unions, while the set~$\FS(Y)$ is not closed under finite sums.

\subsection{Carlson's theorem for words}

Carlson's theorem for words~\cite{carlson1988some} is a unifying theorem generalizing both Hindman's theorem and the Hales-Jewett theorem. It comes from a long line of partition theorems about variable words, among which the Hales-Jewett theorem~\cite{hales1963regularity}, the Graham-Rothschild theorem~\cite{graham1971ramseys} and the Carlson-Simpson lemma~\cite{carlson1984dual}.

\begin{definition}
A \emph{variable word} over an alphabet~$A$ is a finite sequence $w$ over~$A \cup \{\star\}$, where $\star$ is a variable which occurs at least once in~$w$. Given a variable word~$w$ and a letter~$a \in A$,
we write $w[a]$ for the string of length~$|w|$ where all the occurrences of~$\star$ are replaced by~$a$.
\end{definition}

We write $\Wop(A)$ and $\VW(A)$ for the sets of all words and variable words over~$A$, respectively.
The most basic -- and arguably the most important -- theorem about variable words is the Hales-Jewett theorem. It plays the same role on the semigroup of words as the pigeonhole principle does in the semigroup of integers.

\begin{theorem}[Hales-Jewett]
Fix a finite alphabet~$A$.
For every finite coloring $f : \Wop(A) \to C$, there is a variable word~$w$ such that $\{ w[a] : a \in A \}$ is $f$-homogeneous.
\end{theorem}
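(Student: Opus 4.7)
The plan is to reduce the stated theorem to the classical finitary Hales--Jewett theorem, which asserts: for every finite alphabet $A$ and every $k \in \NN$ there is an integer $N = \mathrm{HJ}(|A|, k)$ such that every $k$-coloring of $A^N$ admits a combinatorial line, i.e.\ a variable word $w \in \VW(A)$ of length $N$ with $\{w[a] : a \in A\}$ monochromatic. Granting this, the infinitary statement follows at once: given $f : \Wop(A) \to C$, set $N := \mathrm{HJ}(|A|, |C|)$ and apply the finitary theorem to the restriction $f \uh A^N$ to obtain the desired variable word.

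For the finitary theorem itself I would proceed by induction on $|A|$. The case $|A| = 1$ is trivial since $\{w[a] : a \in A\}$ is a singleton for every variable word $w$. For the inductive step, fix $A = A' \cup \{b\}$ with $|A'| = k$ and an $r$-coloring $g$ of $A^N$, where $N$ is split into $r$ consecutive blocks of lengths $N_1, \ldots, N_r$ determined by iterated applications of the induction hypothesis to $A'$ (with an exponentially larger number of colors encoding the residual dependence on the remaining blocks). Processing the blocks from last to first, one produces for each $i$ a variable word $v_i \in \VW(A')$ with the property that the value of $g$ on any full substitution is insensitive to replacing $v_i[a]$ by $v_i[a']$ for $a, a' \in A'$, regardless of what is chosen in the other blocks.

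Once $v_1, \ldots, v_r$ are constructed, fix some $a \in A'$ and consider the $r+1$ staircase words $W_i := v_1[a] \cdots v_i[a] v_{i+1}[b] \cdots v_r[b]$ for $0 \leq i \leq r$. By pigeonhole on $r$ colors, two indices $i < j$ yield staircases $W_i, W_j$ of the same color, so the variable word $w := v_1[a] \cdots v_i[a] v_{i+1}[\star] \cdots v_j[\star] v_{j+1}[b] \cdots v_r[b]$ satisfies $w[b] = W_i$ and $w[a] = W_j$ sharing a color, and the invariance property of the $v_\ell$'s for $i < \ell \leq j$ then guarantees that $w[a']$ receives the same color for every $a' \in A'$, providing a full combinatorial line. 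The main technical obstacle is the bookkeeping of residual colorings across the backward induction; the resulting bounds on $\mathrm{HJ}(|A|, k)$ are primitive recursive, so the argument formalizes in $\RCA_0$, placing Hales--Jewett strictly below the Hindman-style statements that drive the rest of the paper.
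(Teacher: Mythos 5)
The paper never proves this statement: Hales--Jewett is quoted as classical background and later (Theorem~\ref{thm:hales-jewett}) invoked as a black box, so your proposal is being measured against the literature rather than against an argument in the text. Your reduction of the infinitary statement to the finitary one (restrict $f$ to $A^N$ for $N = \mathrm{HJ}(|A|,|C|)$) is fine, and the combinatorial core of your finitary argument is sound: it is the classical induction on $|A|$, organized so that the induction hypothesis is applied \emph{inside} each of $r$ blocks (to make the colour insensitive to which letter of $A'$ is substituted there) and a pigeonhole over the $r+1$ staircase words $W_0,\dots,W_r$ handles the extra letter $b$; the final verification that $w[b]=W_i$, $w[a]=W_j$ and that $w[a']$ keeps the same colour for $a'\in A'$ is correct. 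One point needs repair, though you partly flag it yourself: the insensitivity of $v_i$ ``regardless of what is chosen in the other blocks'' is literally unachievable. It would force $N_i \geq \mathrm{HJ}(|A'|, |C|^{|A|^{N-N_i}})$ for \emph{every} $i$ simultaneously, and since $\mathrm{HJ}(k,c) \geq \log_k c$ for $k\geq 2$, this makes each $N_i$ exceed an exponential of the others, which is impossible. What your staircase step actually needs, and what the backward processing delivers, is the asymmetric version: insensitivity with arbitrary contents in the not-yet-processed blocks and with the later blocks restricted to the finitely many substitution instances $v_m[c]$, $c\in A$, of the already-chosen words; with that formulation the block lengths can be defined consistently and the argument closes.

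The genuinely wrong claim is the last one: this proof does \emph{not} yield primitive recursive bounds. Because each block length is obtained by applying $\mathrm{HJ}(|A'|,\cdot)$ to a number of colours doubly exponential in the preceding block lengths, passing from alphabet size $k$ to $k+1$ amounts to iterating the function $\mathrm{HJ}(k,\cdot)$ (composed with exponentials) a variable number of times, so the Grzegorczyk level increases with the alphabet size and the bound $(k,r)\mapsto \mathrm{HJ}(k,r)$ grows at Ackermann rate --- exactly the defect of the pre-Shelah proofs. Primitive recursive (wowzer-type) bounds come from Shelah's rearrangement, where the pigeonhole is performed \emph{inside} the blocks (on the two distinguished letters) and the induction hypothesis is applied once \emph{across} the $n=\mathrm{HJ}(k,r)$ blocks. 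This matters for your formalization claim: for a fixed standard alphabet your argument does go through in $\RCA_0$ (each fixed primitive recursive bound is provably total in $\ISig_1$), but the statement as the paper uses it has the alphabet as a parameter, and $\RCA_0$ does not prove the totality of Ackermann-type functions; the assertion that Hales--Jewett holds in $\RCA_0$, as in Theorem~\ref{thm:hales-jewett}, rests on Shelah-style bounds rather than on the argument you give.
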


The Hales-Jewett theorem was generalized to arbitrary finite dimensions by Graham and Rothschild, and to infinite dimensions by Carlson and Simpson.
Carlson's theorem for words generalizes both the Carlson-Simpson lemma and Hindman's theorem as follows.

\begin{definition}
Given a finite or infinite sequence of variable words $(w_n)_{n < \ell}$ with $\ell \in \NN \cup \{\NN\}$,
a word~$u \in \Wop(A \cup \{\varp\})$ is \emph{extracted}\footnote{The terminology comes from \cite{dodos2016ramsey}.} from $(w_n)_{n < \ell}$ if there exists a finite sequence of indices $j_0 < \dots < j_{n-1}$ and a finite sequence of letters $a_0, \dots, a_{n-1} \in A \cup \{\varp\}$ such that
$$u = w_{j_0}[a_0] w_{j_1}[a_1] \dots w_{j_{n-1}}[a_{n-1}]$$
We write $\langle(w_n)_{n < \ell}\rangle_A$ and $\langle(w_n)_{n < \ell}\rangle_{A \varp}$ for the set of all words and variable words extracted from $(w_n)_{n < \ell}$, respectively.
\end{definition}

\begin{theorem}[Carlson for words]\label[theorem]{thm:carlson-theorem}
For every finite coloring~$f : \Wop(A) \to C$, there is an infinite sequence $(w_n)_{n \in \NN}$ of variable words such that $\langle(w_n)_{n \in \NN}\rangle_A$ is $f$-homogeneous.
\end{theorem}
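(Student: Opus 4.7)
The plan is to follow the second approach advertised in the abstract: deduce \Cref{thm:carlson-theorem} from an iterated version of Hindman's theorem for bounded sums via topological dynamics. I would view $\Wop(A\cup\{\varp\})$ as a partial semigroup under concatenation, enriched with the unary substitution operations $w\mapsto w[a]$ for $a\in A\cup\{\varp\}$, which distribute over concatenation. In this language, the extractions of \Cref{thm:carlson-theorem} are exactly finite concatenations together with a choice of substitution letter from $A$ at each position, so the conclusion we seek is that some infinite sequence $(w_n)$ of variable words satisfies
\[
f\bigl(w_{j_0}[a_0]w_{j_1}[a_1]\cdots w_{j_{k-1}}[a_{k-1}]\bigr) = c
\]
for some fixed color $c \in C$, all finite $j_0<\cdots<j_{k-1}$, and all $a_0,\dots,a_{k-1}\in A$.

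I would construct $(w_n)$ recursively, maintaining the invariant that every extraction from $(w_0,\dots,w_{n-1})$ is colored $c$. At stage $n$, the iterated Hindman-type principle provides an infinite sequence of candidate variable words whose finite concatenations, under any fixed substitution pattern, remain compatible with the invariant when appended to any prior extraction. Inside this sequence, a finite sequence of applications of the Hales--Jewett theorem produces a single variable word $w_n$ whose substitutions $w_n[a]$ for $a\in A$ all share the color $c$; a compactness argument ensures that infinitely many candidates survive at every stage, so the recursion goes through and yields the desired $(w_n)$.

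The main obstacle is to arrange the iteration within $\ACA^+_0$. Each stage requires applying \Cref{thm:finite-union} to a coloring that depends on the previously chosen history of substitution patterns, and the overall construction uses an $\omega$-iteration of $\FUT$, which matches the arithmetical strength afforded by $\ACA^+_0$ via the $\omega$-jump. Following Towsner's strategy for \Cref{thm:hindman}, I would avoid invoking idempotent ultrafilters (third-order objects) and instead build explicit filters of ``large'' sets at each level of the iteration, checking that the resulting $(w_n)$ is arithmetical in the $\omega$-jump of $f$. A more delicate point is to ensure that the Hales--Jewett witnesses at stage $n$ are compatible with the block structure produced by the Hindman-type step; this is precisely where topological dynamics supplies the conceptual glue, by furnishing a minimal idempotent that absorbs concatenation and substitution simultaneously, and which the iterated Finite Union theorem is strong enough to mimic effectively.
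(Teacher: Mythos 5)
Your high-level plan matches the paper's second proof (deduce Carlson's theorem from an iterated bounded Finite Union principle via topological dynamics), but there is a genuine gap at exactly the point where all the work lies: you never supply the mechanism that lets the greedy recursion continue. Maintaining the invariant ``every extraction from $(w_0,\dots,w_{n-1})$ has color $c$'' is not enough; the whole difficulty of Hindman/Carlson-type theorems is that after finitely many choices there is no reason a further variable word $w_n$ should exist whose substitutions extend all previous extractions in color $c$ while keeping the construction extendable forever. Your sentence ``the iterated Hindman-type principle provides an infinite sequence of candidate variable words whose finite concatenations, under any fixed substitution pattern, remain compatible with the invariant'' is essentially the statement to be proved: $\IFUT^{\leq 2}$ is a statement about colorings of finite unions of finite sets, with no alphabet or substitution structure, and it does not directly yield any sequence of variable words. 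The bridge is the content of \Cref{sect:topo-dynamics,sect:aet}: one uses $\IFUT^{\leq 2}$ to produce IP-limits $\fulim_X(f)$ (\Cref{lem:vw-fs2-limit-exists}), arranges the limit to lie in a minimal orbit closure (\Cref{lem:word-existence-minimal-in-orbit-closure}, \Cref{lem:vw-fs2-ur-limit-exists}), so that the resulting coloring $g$ is uniformly recurrent and weakly proximal to $f$ (the Auslander--Ellis theorem for $C^{\FIN_A}$, \Cref{thm:aet-for-words}); the Hales--Jewett theorem then upgrades this to strong proximality (\Cref{lem:vw-ur-wp-strongly-proximal}), and only then does the recursion you describe go through, with the target color being $g(\emptyset)$, determined by the dynamics, rather than a color fixed in advance (\Cref{lem:vw-strong-prox-hom}). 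Without constructing such a $g$, neither your compactness remark nor stagewise applications of Hales--Jewett give extendability.

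Two further points. First, the ``minimal idempotent that absorbs concatenation and substitution'' you invoke is precisely the third-order object (idempotent ultrafilter, or element of the Ellis semigroup) that is unavailable in $\ACA^+_0$; the paper replaces it by uniformly recurrent colorings and minimal subshifts of $C^{\FIN_A}$, and your alternative of ``explicit filters of large sets'' is a different, Towsner-style strategy for which you give no definition of largeness and no maintenance argument. Second, the paper deliberately works with located words: Carlson's theorem for words is then obtained by the collapsing morphism $\iota(p)=w_{n_0}[p(n_0)]\cdots w_{n_{k-1}}[p(n_{k-1})]$ (\Cref{prop:equiv-carlson-versions}). Working directly in $\Wop(A\cup\{\varp\})$ under concatenation is possible in principle, but the shift operators, orbit closures and IP-limits used above are defined on $\FIN_A$, so at a minimum your argument would need all of that machinery redeveloped for the concatenation semigroup.
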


Carlson's theorem for words implies Hindman's theorem by considering colorings which depend only on the length of the words.
Indeed, for every~$f : \NN \to C$, letting $g : \Wop(A) \to C$ be defined by $g(u) = f(|u|)$, then for any infinite $g$-homogeneous sequence of variable words~$(w_n)_{n \in \NN}$, letting~$Y = \{ |w_n| = n \in \NN \}$, the set~$\FS(Y)$ is $f$-homogeneous.

\begin{remark}
Carlson~\cite{carlson1988some} actually proved a stronger theorem about finite colorings of variable words : For every finite coloring~$f : \VW(A) \to C$, there is an infinite sequence $(w_n)_{n \in \NN}$ of variable words such that $\langle(w_n)_{n \in \NN}\rangle_{A \varp}$ is $f$-homogeneous. This statement, that we shall refer as Carlson's theorem for variable words, implies its version for words. The only known proofs of Carlson's theorem for variable words involve ultrafilters~\cite{carlson1988some} or the Ellis envelopping semigroup~\cite{furstenberg1989idempotents}.
\end{remark}

While Carlson's theorem for variable words is only known to admit an ultrafilter proof and a topological dynamical proof, Karagiannis~\cite{karagiannis2013combinatorial} gave a Baumgartner-style proof of Carlson's theorem for words. Despite their combinatorial simplicity, Baumgartner-style proofs involve strong set existence axioms from a metamethematical viewpoint. In this article, we give an alternative proof of Carlson's theorem for words in the style of Towsner~\cite{towsner2012simple}, which yields the same known upper bound as Hindman's theorem, namely, $\ACA^{+}_0$.

Carlson's theorem for words admits a formulation in terms of located words, which plays the same role as the Finite Union theorem for Hindman's theorem.

\begin{definition} Fix a finite alphabet.
\begin{enumerate}
	\item A \emph{located word} over~$A$ is a function from a finite nonempty subset of~$\NN$ into~$A$. Let~$\FIN_A$ be the collection of all located words over~$A$.
	\item A \emph{located variable word} over~$A$ is a finite partial function from~$\NN$ into~$A \cup \{\varp\}$, that takes the value~$\varp$ at least once. Let~$\FIN_{A \varp}$ be the collection of all located variable words.
\end{enumerate}
\end{definition}

Note that we have $\FIN_{A \cup \{\varp\}} = \FIN_A \sqcup \FIN_{A \varp}$.
Given a located variable word~$p \in \FIN_{A \varp}$ and a letter~$a \in A$, we write $p[a]$ for the located words with $\dom(p[a]) = \dom p$ and
$p[a](x) = a$ if $x = \star$, and $p[a](x) = p(x)$ otherwise. We also let $p[\varp] = p$.
We equip the collections $\FIN_A$ and $\FIN_{A \varp}$ with a partial ordering defined as
$$
p < q \mbox{ if } \max \dom p < \min \dom q
$$

\begin{definition}
A \emph{block sequence} is a totally ordered set~$X \subseteq \FIN_{A \varp}$.
Given a block sequence~$X$, we let
\begin{enumerate}
	\item $[X]_A = \{ p_0[a_0] \cup \dots \cup p_k[a_k] \in \FIN_A : p_0, \dots, p_k \in \FIN_{A \varp}, a_0, \dots, a_k \in A \}$
	\item $[X]_{A\varp} =\{ p_0[a_0] \cup \dots \cup p_k[a_k] \in \FIN_{A\varp} : p_0, \dots, p_k \in \FIN_{A \varp}, a_0, \dots, a_k \in A \cup \{\varp\} \}$
\end{enumerate}
\end{definition}

In other words, $[X]_{A\varp} = [X]_{A \cup \{\varp\}} \cap \FIN_{A \varp}$.
%\begin{definition}
%Given a finite or infinite sequence of variable words $(w_n)_{n < \ell}$ with $\ell \in \NN \cup \{\NN\}$,
%a word~$\sigma \in A^{<\NN}$ is \emph{extracted}\footnote{The terminology comes from \cite{dodos2016ramsey}.} from $(w_n)_{n < \ell}$ if there exists a finite sequence of indices $j_0 < \dots < j_{n-1}$ and a finite sequence of letters $a_0, \dots, a_{n-1} \in A$ such that
%$$\sigma = w_{j_0}[a_0] w_{j_1}[a_1] \dots w_{j_{n-1}}[a_{n-1}]$$
%We write $\EW((w_n)_{n < \ell})$ for the set of all words extracted from $(w_n)_{n < \ell}$.
%\end{definition}

\begin{theorem}[Carlson for located words]\label[theorem]{thm:carlson-theorem}
For every finite coloring~$f : \FIN_A \to C$, there is an infinite block sequence $X \subseteq \FIN_{A \varp}$ such that $[X]_A$ is $f$-homogeneous.
\end{theorem}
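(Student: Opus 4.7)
The plan is to follow the second route sketched in the abstract: derive Carlson's theorem for located words from an iterated version of Hindman's theorem (equivalently the Finite Union theorem) via a topological-dynamics-style argument. The key structural observation is that $\FIN_{A\varp}$ is a partial semigroup under block-ordered disjoint union, and each letter $a \in A$ gives a substitution map $p \mapsto p[a]$, so that an element of $[X]_A$ is exactly a finite disjoint union $p_{j_0}[a_0] \cup \dots \cup p_{j_{n-1}}[a_{n-1}]$ with $p_{j_0} < \dots < p_{j_{n-1}}$ in $X$ and letters $a_0, \dots, a_{n-1} \in A$. So the goal is to find an infinite block sequence on which $f$ is invariant under both the choice of block indices and the choice of letter instantiations.

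Given a finite coloring $f : \FIN_A \to C$, I would construct $X = (p_n)_{n \in \NN}$ by a diagonal reservoir argument. Consider the lifted coloring $\tilde f : \FIN_{A\varp} \to C^A$ defined by $\tilde f(p) = (f(p[a]))_{a \in A}$, which packages all possible $f$-colors of single-element substitutions. Starting from an infinite reservoir $X_0 \subseteq \FIN_{A\varp}$, at stage $n$ I refine the current reservoir $X_n$ to an infinite subsequence $X_{n+1}$ and select the next element $p_n$ so that, for any finite block-ordered tail $q_0 < \dots < q_{k-1}$ in $X_{n+1}$ and any letter choices $a_0, \dots, a_n, b_0, \dots, b_{k-1} \in A$, the color $f(p_0[a_0] \cup \dots \cup p_n[a_n] \cup q_0[b_0] \cup \dots \cup q_{k-1}[b_{k-1}])$ is a fixed constant $c \in C$. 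The refinement at each stage combines a finitary Hales--Jewett-style pigeonhole (trivializing letter choices) with the Finite Union theorem (homogenizing index choices), applied to a coloring derived from $\tilde f$. Taking the diagonal yields the desired block sequence with $[X]_A$ monochromatic in color $c$.

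The main obstacle is controlling the arithmetic complexity through the stages. Each refinement raises the complexity of the reservoir by a finite jump, and iterating countably many times effectively takes the $\omega$-jump of $f$, which is precisely the extra power afforded by $\ACA^+_0$ over $\ACA_0$. A delicate point is that the Hales--Jewett and Finite Union refinement steps must be interleaved so that neither destroys the homogeneity achieved by the other; working directly with the product coloring $\tilde f$ and a Milliken--Taylor-style strengthening of the Finite Union theorem appears to be the cleanest way to handle this, provided one verifies that the strengthened statement remains provable in $\ACA^+_0$ along the lines of the Blass--Hirst--Simpson analysis of Hindman's theorem.
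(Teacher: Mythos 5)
There is a genuine gap at the heart of your construction: the refinement step you describe at each stage is not justified, and in fact it is essentially the theorem itself. Unwinding your invariant, the only genuinely new requirement at stage $n$ (the cases where the chosen prefix word already meets the old invariant are fine) is to find $p_n$ and an infinite reservoir $X_{n+1}$ such that $f(p_n[a] \cup t) = c$ for \emph{every} letter $a \in A$ and \emph{every} word $t$ extracted from $X_{n+1}$, with the color $c$ fixed in advance at stage $0$. A finitary Hales--Jewett application controls letter substitutions only on a single located variable word (or a fixed finite pattern), and the Finite Union theorem controls unions of blocks but says nothing about the letter choices made inside each future block; neither, nor any stagewise interleaving of the two, gives you control over unions of \emph{arbitrarily many} future reservoir elements each carrying its own letter -- but that is exactly what $[X_{n+1}]_A$ ranges over, i.e.\ exactly the content of Carlson's theorem for located words. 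Worse, even granting such control, you cannot prescribe the color $c$ in advance: this is the classical obstruction to greedy proofs of Hindman-type theorems, and it is precisely what the heavier machinery (Towsner's half/full matches, or recurrence and proximality in topological dynamics, or idempotent ultrafilters) exists to circumvent. The appeal to ``a Milliken--Taylor-style strengthening of $\FUT$'' does not fill the hole, since Milliken--Taylor-type statements still color finite sets and do not see the $A$-substitutions on the tail.

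For comparison, the paper's dynamical route never fixes a target color and never tries to homogenize tails directly against $f$. It first produces, from $\IFUT^{\leq 2}$ (iterated Finite Union for unions of at most two blocks, applied to countably many colorings at once), a coloring $h = \fulim_X(f)$ lying in a \emph{minimal} orbit closure; minimality makes $h$ uniformly recurrent, and the FU-limit makes $h$ weakly proximal to~$f$. A finite Hales--Jewett application then upgrades weak proximality plus uniform recurrence to strong proximality, and only at that point does one run the easy recursion: the color of the solution is $h(\emptyset)$, dictated by the recurrent coloring rather than chosen beforehand, and the recurrence of $h$ is exactly what guarantees each extension step of the recursion succeeds (\Cref{lem:vw-strong-prox-hom}). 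If you want to salvage your outline, you need an analogue of this intermediate object -- a recurrent coloring proximal to $f$, or Towsner's full-match sets plus a compactness argument -- to replace the unsupported stagewise refinement.
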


Carlson's theorem for located words implies its version for words by collapsing the domain of the partial functions to obtain an initial segment of~$\NN$.
This implication will be formally proven in~\Cref{prop:equiv-carlson-versions}. Like the Finite Union theorem, Carlson's theorem for located words is more convenient to manipulate since~$[X]_A$ is closed under finite unions.

\subsection{Organization of the paper}

In \Cref{sect:versions-carlson}, we discuss various versions of Carlson's theorem for words and justify  the use of located words by the ability to iterate the theorem. In \Cref{sect:towsner-ct}, we give a Towsner-style proof of Carlson's theorem for located words. This proof can be formalized in~$\ACA^{+}_0$. In \Cref{sect:topo-dynamics}, we develop some basics of topological dynamics for located words, as a preparation for \Cref{sect:aet}. In \Cref{sect:aet} we state two versions of the Auslander-Ellis theorem for located words, and use them to give an alternative proof of Carlson's theorem for located words. We also prove that AET for located words follows from an iterated version of the Finite Union theorem. Since the latter theorem holds in~$\ACA^{+}_0$, this yields a second proof of Carlson's theorem for located words in~$\ACA^{+}_0$. %Last, in \Cref{sect:iterated-versions}, we prove the equivalence of the iterated versions of the Finite Union theorem and Carlson's theorem for located words.
    Lasst, in \Cref{sect:open-questions}, we state a few remaining open questions.

\section{Versions of Carlson's theorem for words}\label[section]{sect:versions-carlson}

One desirable feature of a partition theorem is the ability to iterate it, to obtain a simultaneous solution to multiple instances. This can be achieve whenever the combinatorial space representing the solution is isomorphic to the original space. We then obtain a stronger statement, saying that any partition of a specific combinatorial space admits a combinatorial subspace within one of the parts. In this section, we study the corresponding strengthenings for the Hindman's theorem, Carlson's theorem for words, and justify why the Finite Union theorem and Carlson's theorem for located words is more convenient in terms of iterations. In what follows, we consider only partial semigroups where the product is defined only on distinct elements.

\subsection{Strong Finite Union theorem and Hindman's theorem}
In this section, we shall see that both the Finite Union theorem and Hindman's theorem are equivalent to their strong version over~$\RCA_0$. However, there exists a natural semigroup isomorphism between the Finite Union theorem, its strong version and the strong version of Hindman's theorem, while the equivalence with Hindman's theorem requires some extra work.

The Finite Union theorem admits a natural iterable version thanks to the closure property of its combinatorial space.

\begin{theorem}[Strong Finite Union]
For every infinite block sequence~$X \subseteq \P_f(\NN)$ and every finite coloring $f : \FU(X) \to C$,
there is an infinite block sequence~$Y \subseteq \FU(X)$ such that $\FU(Y)$ is $f$-homogeneous.
\end{theorem}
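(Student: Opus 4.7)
The plan is to reduce the Strong Finite Union theorem directly to the Finite Union theorem via a change of variables, using the fact that the combinatorial space $\FU(X)$ is isomorphic as a partial semigroup to $\P_f(\NN)$. This kind of reduction is the reason the Finite Union theorem is said to be ``more convenient in terms of iterations'' in the paragraph above.

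First I would fix an infinite block sequence $X = (x_n)_{n \in \NN}$ with $\max x_n < \min x_{n+1}$ for every $n$, together with a finite coloring $f : \FU(X) \to C$. Define the map
\[
\phi : \P_f(\NN) \to \FU(X), \qquad \phi(F) = \bigcup_{n \in F} x_n.
\]
Since $X$ is a block sequence, the $x_n$ are pairwise disjoint, so $\phi$ is a bijection between $\P_f(\NN)$ and $\FU(X)$, and it commutes with finite unions: $\phi(F \cup G) = \phi(F) \cup \phi(G)$. Moreover, $\phi$ preserves the block ordering, because if $\max F < \min G$ then $\max \phi(F) = \max x_{\max F} < \min x_{\min G} = \min \phi(G)$ (and conversely). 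In particular, $\phi$ maps block sequences in $\P_f(\NN)$ to block sequences in $\FU(X)$, and vice versa.

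Next I would pull the coloring back along $\phi$: define $g : \P_f(\NN) \to C$ by $g(F) = f(\phi(F))$. Applying \Cref{thm:finite-union} to~$g$ yields an infinite block sequence $Z \subseteq \P_f(\NN)$ such that $\FU(Z)$ is $g$-homogeneous. Setting $Y = \phi(Z)$, the observations from the previous paragraph guarantee that $Y$ is an infinite block sequence contained in $\FU(X)$ and that $\FU(Y) = \phi(\FU(Z))$, which is $f$-homogeneous by definition of $g$. This completes the argument.

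I expect no serious obstacle: the whole proof is a formal dictionary translation, and every step (defining $\phi$, checking bijectivity, checking that block structure and finite unions are preserved) is elementary enough to be carried out in $\RCA_0$, so the equivalence with the Finite Union theorem announced in the surrounding text comes out for free in both directions (the reverse implication being the trivial special case $X = \{\{n\} : n \in \NN\}$, for which $\FU(X) = \P_f(\NN)$). The only place where one must be a little careful is ensuring that the indexing bijection $n \mapsto x_n$ is available as a function in the base theory, which is immediate since $X$ is given as an infinite block sequence and can therefore be enumerated in increasing order of $\min$.
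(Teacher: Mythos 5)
Your proof is correct and follows essentially the same route as the paper: both pull the coloring back along the canonical isomorphism $E \mapsto \bigcup_{n \in E} F_n$ from $(\P_f(\NN),\cup)$ to $(\FU(X),\cup)$, apply the Finite Union theorem, and push the resulting block sequence forward. The checks you add (bijectivity, preservation of unions and of the block ordering, availability of the enumeration in $\RCA_0$) are exactly the implicit content of the paper's argument.
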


The strong Finite Union theorem is an immediate consequence of the Finite Union theorem since for every infinite block sequence~$X = \{ F_0 < F_1 < \dots \} \subseteq \P_f(\NN)$, there is a natural isomorphism $\iota$ from $(\P_f(\NN), \cup)$ to $(\FU(X), \cup)$ defined by $\iota(E) = \bigcup_{n \in E} F_n$.

\begin{lemma}[$\RCA_0$]
The Finite Union theorem implies its strong version.	
\end{lemma}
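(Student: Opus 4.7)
The plan is to make the isomorphism sketched right before the lemma statement precise, and then pull back a coloring through it. So suppose we are given, in $\RCA_0$, an infinite block sequence $X = \{F_0 < F_1 < \dots\} \subseteq \P_f(\NN)$ and a finite coloring $f : \FU(X) \to C$. First I would define $\iota : \P_f(\NN) \to \FU(X)$ by $\iota(E) = \bigcup_{n \in E} F_n$. Because $X$ is a block sequence, $\iota$ is injective, preserves unions (i.e.\ $\iota(E \cup E') = \iota(E) \cup \iota(E')$), and is order-preserving in the block-sequence sense: if $\max E < \min E'$ then $\max \iota(E) = \max F_{\max E} < \min F_{\min E'} = \min \iota(E')$. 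Its image is exactly $\FU(X)$ by definition of $\FU$. Note that $\iota$ is $\Delta^0_1$ in $X$, so all these constructions are available in $\RCA_0$.

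Next I would pull back the coloring: define $g : \P_f(\NN) \to C$ by $g(E) = f(\iota(E))$. This $g$ is $\Delta^0_1$ in $f$ and $X$, so it exists in $\RCA_0$. Now apply the ordinary Finite Union theorem (\Cref{thm:finite-union}) to $g$ to obtain an infinite block sequence $Z = \{E_0 < E_1 < \dots\} \subseteq \P_f(\NN)$ such that $\FU(Z)$ is $g$-homogeneous, with color $c \in C$ say.

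Finally I would set $Y = \{\iota(E_0), \iota(E_1), \dots\} \subseteq \FU(X)$. Since $\iota$ is order-preserving, $Y$ is an infinite block sequence. To check $f$-homogeneity of $\FU(Y)$ with color $c$, take an arbitrary non-empty finite union $\iota(E_{n_0}) \cup \dots \cup \iota(E_{n_k})$ with $n_0 < \dots < n_k$; by the union-preservation of $\iota$ this equals $\iota(E_{n_0} \cup \dots \cup E_{n_k})$, and $E_{n_0} \cup \dots \cup E_{n_k} \in \FU(Z)$, so its $f$-color is $g(E_{n_0} \cup \dots \cup E_{n_k}) = c$.

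There is no real obstacle here; the only point one has to be mildly careful about is that everything is arithmetically tame enough to live in $\RCA_0$. The coloring $g$ and the set $Y$ are both computable from $X$ and $f$, and the verification uses only the elementary algebraic properties of $\iota$ (injectivity, preservation of unions, preservation of the block-sequence order), all of which are provable in $\RCA_0$ directly from the definitions.
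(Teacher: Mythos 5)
Your proof is correct and follows essentially the same route as the paper: pull the coloring back along the canonical isomorphism $\iota(E) = \bigcup_{n \in E} F_n$, apply the Finite Union theorem to the pulled-back coloring, and push the resulting block sequence forward. The extra verifications you include (union-preservation, block-order preservation, and $\Delta^0_1$-definability of $g$ and $Y$) are exactly the details the paper leaves implicit.
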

\begin{proof}
Given $X = \{ F_0 < F_1 < \dots \}$ and $f : \FU(X) \to C$, let~$g : \P_f(\NN) \to C$ be defined by $g(E) = f(\iota(E))$. By the Finite Union theorem, there is an infinite block sequence~$Z \subseteq \P_f(\NN)$ such that $\FU(Z)$ is $g$-homogeneous. Let~$Y = \{ \iota(E) : E \in Z \}$. The set~$Y \subseteq \FU(X)$ is an infinite block sequence and $\FU(Y) = \{ \iota(E) : E \in \FU(Z) \}$, so $\FU(Y)$ is $f$-homogeneous.
\end{proof}

It follows that the Finite Union theorem is equivalent to its strong version over~$\RCA_0$. The strong version of Hindman's theorem requires more work, as given an infinite set~$X \subseteq \NN$ and an infinite set~$Y \subseteq \FS(X)$, $\FS(Y)$ is not necessarily a subset of~$\FS(X)$.

\begin{theorem}[Strong Hindman]
For every infinite set~$X \subseteq \NN$ and every finite coloring $f : \FS(X) \to C$,
there is an infinite set~$Y$ such that $\FS(Y) \subseteq \FS(X)$ and $\FS(Y)$ is $f$-homogeneous.
\end{theorem}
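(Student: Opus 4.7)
The plan is to reduce Strong Hindman to the Finite Union theorem, which we have just seen to be equivalent to Hindman's theorem over $\RCA_0$. The key observation is that if $y_1, \dots, y_m$ are sums of elements of $X$ whose underlying index sets are pairwise disjoint, then $y_1 + \dots + y_m$ is again a sum of distinct elements of $X$, hence lies in $\FS(X)$; so the closure defect of $\FS(X)$ under addition disappears once we work at the level of indices.

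Concretely, given an infinite $X = \{x_0 < x_1 < \dots\} \subseteq \NN$ (where without loss of generality $0 \notin X$, since $0$ contributes nothing to a sum of distinct elements) and a finite coloring $f : \FS(X) \to C$, I would introduce the auxiliary coloring $g : \P_f(\NN) \to C$ defined by $g(F) = f\bigl(\sum_{i \in F} x_i\bigr)$. Applying the Finite Union theorem to $g$ yields an infinite block sequence $Z = \{E_0 < E_1 < \dots\} \subseteq \P_f(\NN)$ whose set of finite unions $\FU(Z)$ is $g$-homogeneous. Setting $y_k = \sum_{i \in E_k} x_i$, I take $Y = \{y_k : k \in \NN\}$ as the candidate witness.

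The verification is essentially formal: every element of $\FS(Y)$ has the form $\sum_{k \in G} y_k = \sum_{i \in \bigcup_{k \in G} E_k} x_i$ for some finite nonempty $G \subseteq \NN$, and $\bigcup_{k \in G} E_k \in \FU(Z)$ consists of pairwise distinct indices by the block-disjointness of $Z$. Hence this sum is a genuine element of $\FS(X)$, with $f$-color equal to $g\bigl(\bigcup_{k \in G} E_k\bigr)$, which is constant by homogeneity. Thus $\FS(Y) \subseteq \FS(X)$ and $\FS(Y)$ is $f$-homogeneous.

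The only step that is not a pure translation through the index bijection is verifying that $Y$ is actually infinite. For that I would observe that each $x_i \geq 1$, so a fixed integer admits only finitely many representations as a sum of distinct elements of $X$; hence the sequence $(y_k)_{k \in \NN}$ cannot take only finitely many values, and $Y$ must be infinite. I expect this counting step to be the main (if mild) obstacle, as it is the only place the argument uses something beyond the abstract semigroup structure of $(\NN, +)$; everything else transports mechanically along the correspondence between block-disjoint index sets and their $X$-sums.
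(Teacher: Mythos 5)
Your proof is correct, but it takes a different route from the paper. The paper derives Strong Hindman from plain Hindman's theorem: it first normalizes $X$ to an infinite 2-apart set $X_0 \leq_{\FS} X$ via Hindman's normalization lemma (\Cref{lem:normalization-ip-set}), which yields a genuine isomorphism $(\NN,+) \cong (\FS(X_0),+)$, and then transfers an ordinary Hindman solution back through this isomorphism. You instead invoke the Finite Union theorem directly on the coloring $g(F) = f\bigl(\sum_{i \in F} x_i\bigr)$ of index sets, exploiting the fact that sums with pairwise disjoint index supports recombine inside $\FS(X)$; this makes the verification of $\FS(Y) \subseteq \FS(X)$ and of homogeneity purely formal and avoids 2-apartness altogether (your counting argument for the infinitude of $Y$ is fine, and can even be replaced by the observation that $y_k \geq x_{\min E_k} \geq k$, which also shows $Y$ exists by bounded search in $\RCA_0$). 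What each approach buys: yours is the more mechanical and arguably more natural reduction, in line with the paper's own remark that $\FUT$-type statements correspond directly to the strong versions; but note that over $\RCA_0$ the implication from Hindman's theorem to $\FUT$ itself requires a 2-apartness normalization of exactly the kind in \Cref{lem:normalization-ip-set}, so your argument relocates rather than eliminates that lemma, and it cannot serve as the paper's proof that Hindman's theorem alone implies its strong version (the step $(1) \rightarrow (2)$ feeding into \Cref{prop:equiv-hindman-fut}) without presupposing the $\mathsf{HT} \rightarrow \FUT$ direction of that equivalence.
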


Unlike the Finite Union theorem, there is no natural isomorphism from $(\NN, +)$ to $(\FS(X), +)$, unless it satisfies some extra conditions.

\begin{definition}[Carlucci et al.~\cite{carlucci2020new}]
Given $n = 2^{n_0} + \dots + 2^{n_{\ell-1}}$ with $n_0 < \dots < n_{\ell-1}$, we let~$\lambda(n) = n_0$, $\mu(n) = n_{\ell-1}$. An infinite set~$A = \{ a_0 < a_1 < \dots \}$ is \emph{2-apart} if for every~$n$, $\mu(a_n) < \lambda(a_{n+1})$.
\end{definition}

The notion of 2-apartness for sets of integers is equivalent to the notion of block sequence for sets of finite sets. Suppose $X \subseteq \NN$ is infinite and 2-apart. Then there is an isomorphism $\iota$ from $(\NN, +)$ to $(\FS(X), +)$ defined by $\iota(\sum_{i \in E} 2^i) =  \sum_{i \in E} a_i$, where~$X = \{a_0 < a_1 < \dots \}$. The notion of 2-apartness is not preserved by subsets of finite sums in general: there exist sets $X, Y \subseteq \NN$ such that $X$ is 2-apart, $\FS(Y) \subseteq \FS(X)$ but $Y$ is not 2-apart. The following stronger relation preserves 2-apartness.

\begin{definition}
Given two infinite sets~$X, Y \subseteq \NN$, we write $Y \leq_{\FS} X$ if there is an infinite block sequence~$H \subseteq \P_f(X)$ such that $Y = \{ \Sigma F : F \in H \}$.
\end{definition}

Note that if~$Y \leq_{\FS} X$, then $\FS(Y) \subseteq \FS(X)$. Moreover, if $X$ is 2-apart, then so is~$Y$. The following lemma was proven by Hindman~\cite{hindman1974finite} in his original paper.

\begin{lemma}[Hindman, $\RCA_0$]\label[lemma]{lem:normalization-ip-set}
For every infinite set~$X \subseteq \NN$, there is an infinite 2-apart set~$Y \leq_{\FS} X$.
\end{lemma}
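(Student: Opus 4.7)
The plan is to build an infinite block sequence $H = (F_n)_{n \in \NN} \subseteq \P_f(X)$ by recursion such that the finite sums $y_n = \Sigma F_n$ are 2-apart, and then set $Y = \{ y_n : n \in \NN \}$. By construction $Y \leq_{\FS} X$ via $H$, so the only real content is the 2-apartness.

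The key observation driving the recursion is that for every positive integer $m$ and every $M \in \NN$, one has $\lambda(m) > M$ if and only if $2^{M+1}$ divides $m$. Thus, assuming $F_0 < F_1 < \dots < F_n$ have already been chosen and writing $M_n = \mu(y_n)$, it suffices to find a nonempty finite $F_{n+1} \subseteq X$ with $\min F_{n+1} > \max F_n$ whose sum is divisible by $2^{M_n+1}$: this guarantees $\lambda(y_{n+1}) \geq M_n+1 > \mu(y_n)$, which is exactly the 2-apartness condition for the next step.

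To produce such $F_{n+1}$, I use a straightforward pigeonhole argument. Enumerate $X \cap (\max F_n, \infty) = \{x_0 < x_1 < \dots\}$, form the partial sums $s_k = \sum_{i<k} x_i$ for $k = 0, 1, \dots, 2^{M_n+1}$, and note that these $2^{M_n+1}+1$ values cannot all be distinct modulo $2^{M_n+1}$. Pick $j < k$ with $s_j \equiv s_k \pmod{2^{M_n+1}}$ and set $F_{n+1} = \{x_j, x_{j+1}, \dots, x_{k-1}\}$; then $\Sigma F_{n+1} = s_k - s_j$ is a positive multiple of $2^{M_n+1}$, as required. Since the search is effectively bounded (by $2^{M_n+1}+1$ elements of $X$), the whole construction is $\Delta^0_1$ in $X$ and carries out in $\RCA_0$.

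The only mild subtlety is the observation that one cannot in general arrange 2-apartness with singletons: $X$ might lie in a single residue class modulo a small power of $2$ (for example, $X$ could consist only of odd numbers, in which case $\lambda(x)=0$ for every $x\in X$). This is precisely what forces the use of finite sums and makes the pigeonhole step necessary; once that idea is in hand, the rest is bookkeeping.
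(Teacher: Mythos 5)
Your proof is correct, and it takes a genuinely different route for the key existence step than the paper does. Both arguments share the same outer skeleton: recursively build a block sequence $F_0 < F_1 < \dots$ of finite subsets of $X$ so that $\lambda(\Sigma F_{n+1}) > \mu(\Sigma F_n)$, then take $Y = \{\Sigma F_n : n \in \NN\}$. The difference is how one finds the next block. The paper first proves, by induction on $k$, that $\FS(Z)$ contains an element with $\lambda \geq k$ for every infinite $Z \subseteq \NN$, using the carry trick: if no sum has $\lambda \geq k+1$, pick two disjoint finite subsets whose sums both have least binary digit exactly $k$; their union's sum is then divisible by $2^{k+1}$, a contradiction. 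You instead get the next block directly by the classical pigeonhole argument on partial sums: among the $2^{M_n+1}+1$ partial sums of the first $2^{M_n+1}$ elements of $X$ above $\max F_n$, two agree modulo $2^{M_n+1}$, so a consecutive block of those elements has sum divisible by $2^{M_n+1}$, hence $\lambda \geq M_n+1$. What your approach buys is effectivity and a cleaner formalization: each step is an explicitly bounded search, so the whole construction is a $\Delta^0_1(X)$ primitive recursion whose totality needs only $\Sigma^0_1$ induction, comfortably within $\RCA_0$, whereas the paper's preliminary claim is an induction on a more complex (roughly $\Pi^0_2$) statement about all tails of $X$. What the paper's argument buys is brevity of the combinatorial core (no explicit counting), at the cost of that auxiliary induction. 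You also correctly aim for $\lambda(y_{n+1}) > \mu(y_n)$ (via $M_n = \mu(y_n)$), which is exactly the 2-apartness requirement, and your blocks are nonempty, pairwise separated, and drawn from $X$ above $\max F_n$, so $H = \{F_n : n \in \NN\}$ witnesses $Y \leq_{\FS} X$ and $Y$ is infinite since $y_{n+1} \geq 2^{M_n+1} > y_n$. No gaps.
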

\begin{proof}
We first prove by induction over~$k \in \NN$ that for every infinite set~$X \subseteq \NN$,
there is some~$n \in \FS(X)$ such that $\lambda(n) \geq k$.
The case $k = 0$ is trivial by taking $n = \min X$.
Suppose by induction hypothesis it holds for~$k$, but not for~$k+1$.
Then for every finite set~$F \subseteq X$, there is some~$n \in \FS(X \setminus F)$ such that $\lambda(n) = k$.
Let $F_0, F_1$ be two non-empty subsets of~$X$ such that $\max F_0 < \min F_1$ and $\lambda(\sum F_i) = k$.
Then $\lambda(\sum F_0 \cup F_1) > k$, contradicting our hypothesis. This proves our claim.

Using the claim, we construct an infinite sequence $F_0, F_1, \dots$ of finite non-empty subsets of~$X$ such that $\max F_n < \min F_{n+1}$ and $\lambda(\sum F_{n+1}) > \lambda(\sum F_n)$.
First, $F_0 = \{ \min X \}$. Assume $F_0, \dots, F_n$ are defined.
By our claim, since $X \setminus \{0, 1, \dots, \max F_n\}$ is infinite, there is a finite set
$F_{n+1} \subseteq X \setminus \{0, 1, \dots, \max F_n\}$ such that $\lambda(\sum F_{n+1}) > \lambda(\sum F_n)$.
By construction, the set $Y = \{ \sum F_n : n \in \NN \}$ is 2-apart and $B \leq_{\FS} A$.
\end{proof}

We are now ready to prove strong Hindman's theorem from Hindman's theorem.

\begin{lemma}[$\RCA_0$]
Hindman's theorem implies its strong version.	
\end{lemma}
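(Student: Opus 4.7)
The plan is to reduce the arbitrary infinite set $X$ to a 2-apart one via \Cref{lem:normalization-ip-set}, pull the coloring back along the resulting semigroup isomorphism to invoke Hindman's theorem, and then apply the normalization lemma a second time in order to transport the homogeneous set back to $\FS(X)$. Concretely, I would first choose an infinite 2-apart set $X' \leq_{\FS} X$, enumerated as $X' = \{a_0 < a_1 < \dots\}$. Since $\FS(X') \subseteq \FS(X)$, we may restrict $f$ to $\FS(X')$. The 2-apartness of $X'$ guarantees that the map $\iota(\sum_{i \in E} 2^i) = \sum_{i \in E} a_i$ is a partial-semigroup isomorphism from $(\NN, +)$ to $(\FS(X'), +)$: integers with disjoint binary supports add without carry, and by 2-apartness the corresponding sums in $\FS(X')$ keep the supports disjoint, so $\iota(m+n) = \iota(m) + \iota(n)$ whenever $m$ and $n$ have disjoint binary supports.

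Pulling back along $\iota$ gives a coloring $g : \NN \to C$ defined by $g(n) = f(\iota(n))$. Hindman's theorem supplies an infinite $Z \subseteq \NN$ with $\FS(Z)$ $g$-homogeneous. One would like to simply take $Y = \iota[Z]$, but the direct image need not work: distinct elements of $Z$ may share binary digits, in which case $\iota$ fails to convert their sum into a sum of distinct elements of $X'$, and the target of $\iota$ might even leave $\FS(X')$.

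To fix this, I would apply \Cref{lem:normalization-ip-set} again to $Z$, producing an infinite 2-apart set $Z' \leq_{\FS} Z$ with $\FS(Z') \subseteq \FS(Z)$. Setting $Y = \iota[Z']$, any finite sum of distinct elements of $Y$ is of the form $\iota(n_1) + \dots + \iota(n_k)$ with $n_1, \dots, n_k$ distinct elements of $Z'$; by 2-apartness of $Z'$ this equals $\iota(n_1 + \dots + n_k)$, which lies in $\FS(X')\subseteq \FS(X)$ and receives the homogeneous $g$-color, hence a fixed $f$-color. The main obstacle, and the reason the proof is longer than for the Finite Union theorem, is recognizing that 2-apartness must be invoked twice — once on $X$ to set up the semigroup isomorphism $\iota$, and once on the output $Z$ of Hindman's theorem so that its finite sums can actually be pushed back through $\iota$ as finite sums in $\FS(X')$.
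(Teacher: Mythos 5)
Your proof is correct and follows the same overall route as the paper: normalize $X$ to a 2-apart set via \Cref{lem:normalization-ip-set}, pull the coloring back along the induced map $\iota$, apply Hindman's theorem, and push the solution forward. The one genuine difference is your second application of \Cref{lem:normalization-ip-set}, this time to the Hindman solution $Z$: the paper takes $Y = \iota[Z]$ directly and asserts $\FS(Y) = \{\iota(n) : n \in \FS(Z)\}$, which requires the elements of $Z$ to have pairwise disjoint binary supports (for an arbitrary $Z$, a sum such as $\iota(3)+\iota(5)$ involves carries, need not equal $\iota(8)$, and may even fall outside $\FS(X_0)$). Your extra normalization, replacing $Z$ by a 2-apart $Z' \leq_{\FS} Z$ with $\FS(Z') \subseteq \FS(Z)$, supplies exactly the hypothesis needed for the push-forward to respect finite sums, so your argument is slightly more careful than, and in effect repairs a small elision in, the paper's proof; the only cost is one additional invocation of the lemma, which is harmless over $\RCA_0$.
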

\begin{proof}
Given $X$ and $f : \FS(X) \to C$, by \Cref{lem:normalization-ip-set} there is an infinite 2-apart set~$X_0 \leq_{\FS} X$. Let~$\iota$ be the canonical isomorphism from $(\NN, +)$ to $(\FS(X_0), +)$.
Let~$g : \FS(\NN) \to C$ be defined by $g(n) = f(\iota(n))$. By Hindman's theorem, there is an infinite set~$Z \subseteq \NN$ such that $\FS(Z)$ is $g$-homogeneous. Let~$Y = \{ \iota(n) : n \in Z \}$. The set~$Y$ satisfies $\FS(Y) \subseteq \FS(X)$, and $\FS(Y) = \{ \iota(n) : n \in \FS(Z) \}$, so $\FS(Y)$ is $f$-homogeneous.
\end{proof}

Thanks to the equivalence between Hindman's theorem and the Finite Union theorem, we have the following equivalence.

\begin{proposition}[Folklore]\label[proposition]{prop:equiv-hindman-fut}
The following statements are equivalent over~$\RCA_0$:
\begin{enumerate}
	\item Hindman's theorem
	\item Strong Hindman's theorem
	\item Finite Union theorem
	\item Strong Finite Union theorem
\end{enumerate}
\end{proposition}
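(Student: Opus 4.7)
The plan is to close the cycle of implications by combining the two lemmas established immediately above the proposition (namely, FUT implies strong FUT, and HT implies strong HT) with the classical equivalence between HT and FUT. The two ``strong implies ordinary'' directions $(2)\Rightarrow(1)$ and $(4)\Rightarrow(3)$ are immediate by specializing the strong version to the trivial block sequence $X = \{\{n\} : n \in \NN\}$ (respectively $X = \NN$ itself), since $\FU(X) = \P_f(\NN)$ and $\FS(X) = \NN$ in those cases. It therefore suffices to prove the two-way equivalence between Hindman's theorem and the Finite Union theorem over~$\RCA_0$.

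For the equivalence between $(1)$ and $(3)$, I would use the bijection $\phi : \NN \to \P_f(\NN)$ defined by $\phi(n) = F$ iff $n = \sum_{i \in F} 2^i$, alluded to in the paragraph following \Cref{thm:finite-union}. This bijection is computable in~$\RCA_0$, so it can be freely manipulated. The key observation is that it acts as a partial semigroup isomorphism between $(\NN,+)$ (restricted to distinct-summand sums) and $(\P_f(\NN),\cup)$ (restricted to disjoint unions) precisely when the sets in question are ``block sequences'' on one side and ``2-apart'' sets on the other: if $F_0 < F_1 < \dots$ is a block sequence then $\phi^{-1}(F_0), \phi^{-1}(F_1), \dots$ is 2-apart, and $\phi$ sends disjoint unions to distinct sums.

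To deduce FUT from HT, given $f : \P_f(\NN) \to C$ I would define $g : \NN \to C$ by $g(n) = f(\phi(n))$, apply HT to extract an infinite $Z \subseteq \NN$ with $\FS(Z)$ $g$-homogeneous, then invoke \Cref{lem:normalization-ip-set} to replace $Z$ by some $Z_0 \leq_{\FS} Z$ which is 2-apart. Since $\FS(Z_0) \subseteq \FS(Z)$, it is still $g$-homogeneous; and because $Z_0$ is 2-apart, $Y = \{ \phi(n) : n \in Z_0 \}$ is an infinite block sequence with $\FU(Y) = \phi(\FS(Z_0))$, hence $f$-homogeneous. Conversely, to deduce HT from FUT, given $f : \NN \to C$ I would define $g(F) = f(\phi^{-1}(F))$, apply FUT to get an infinite block sequence $Y$ with $\FU(Y)$ $g$-homogeneous, and set $Z = \{\phi^{-1}(F) : F \in Y\}$; since $Y$ is a block sequence, all sums of distinct elements from $Z$ arise as preimages of disjoint unions from $Y$, so $\FS(Z) = \phi^{-1}(\FU(Y))$ is $f$-homogeneous.

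The only technically delicate point is the appeal to \Cref{lem:normalization-ip-set} in the $\HT\Rightarrow\FUT$ direction: without first normalizing the witness set to be 2-apart, $\phi$ would not transport $\FS(Z)$ to the $\FU$ of a block sequence. Everything else is a routine chase of definitions, and since all the arithmetic involved (binary representation, computing $\phi$ and $\phi^{-1}$, checking 2-apartness) is primitive recursive, the whole argument is carried out in $\RCA_0$.
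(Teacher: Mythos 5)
Your proposal is correct, and it uses exactly the same two ingredients as the paper -- the binary-representation bijection $n \mapsto E$ with $n = \sum_{i \in E} 2^i$, and the normalization \Cref{lem:normalization-ip-set} -- but arranges the cycle differently. The paper closes the equivalences by citing the two lemmas for $(1)\leftrightarrow(2)$ and $(3)\leftrightarrow(4)$ and then transferring $(3)\rightarrow(1)$ and $(2)\rightarrow(3)$ through the isomorphism, so the 2-apartness issue is dealt with once, inside the proof that $\HT$ implies its strong version. You instead prove $(1)\leftrightarrow(3)$ directly (plus the trivial specializations $(2)\Rightarrow(1)$, $(4)\Rightarrow(3)$ with $X = \NN$ and $X = \{\{n\} : n \in \NN\}$), invoking \Cref{lem:normalization-ip-set} explicitly in the $\HT\Rightarrow\FUT$ direction to replace the Hindman-homogeneous set $Z$ by a 2-apart $Z_0 \leq_{\FS} Z$ before pushing it through $\phi$. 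This is the right move: without it, $\phi$ does not carry $\FS(Z)$ to the $\FU$ of a block sequence, and your version makes this transfer step fully explicit where the paper's ``immediate'' isomorphism argument in $(2)\rightarrow(3)$ leaves it to the reader (there one must observe that the homogeneous set can be taken of the form $Y \leq_{\FS} X$ for the 2-apart $X$, or normalize once more). The cost of your arrangement is a mild redundancy -- the normalization content is used both inside the cited lemma for $(1)\Rightarrow(2)$ and again in your $(1)\Rightarrow(3)$ -- but the resulting proof is self-contained and carries out in $\RCA_0$ exactly as claimed.
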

\begin{proof}
We have already proven the equivalences~$(1) \leftrightarrow (2)$ and $(3) \leftrightarrow (4)$.
The proof of $(3) \rightarrow (1)$ is immediate, using the isomorphism~$\iota$ from $(\P_f(\NN), \cup)$ to $(\NN, +)$ defined by $\iota(E) = \sum_{i \in E} 2^i$. Last, $(2) \rightarrow (3)$ is immediate, using the isomorphism $\iota$ from $(\FS(X), +)$ to $(\P_f(\NN), \cup)$ defined by  $\iota(\sum_{i \in E} 2^i)  = E$.
\end{proof}

\begin{remark}
The previous considerations show that there exists are direct correspondance between the Finite Union theorem, its strong version, and the strong version of Hindman's theorem. On the other hand, the proof of any of these theorems from Hindman's theorem requires a preliminary lemma, namely, \Cref{lem:normalization-ip-set}. In particular, the Finite Union theorem is naturally equivalent to its strong version, while the equivalence between Hindman's theorem and its strong version is arguably less natural.
\end{remark}

\subsection{Strong Carlson's theorem for located words and words}
Carlson's theorem for located words and for words both admit a strong version. As before, Carlson's theorem for located words is naturally equivalent to its strong version and to the strong version of Carlson's theorem for words. However, it remains open whether Carlson's theorem for words implies its strong version over~$\RCA_0$.

\begin{theorem}[Strong Carlson for located words]
Let $X \subseteq \FIN_{A \varp}$ be an infinite block sequence and let $f : [X]_{A \varp} \to C$ be a coloring.
There is an infinite block sequence~$Y \subseteq [X]_{A \varp}$ such that $[Y]_{A \varp}$ is $f$-homogeneous.
\end{theorem}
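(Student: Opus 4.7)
The plan is to mimic the proof of the Strong Finite Union theorem by exhibiting a natural isomorphism of partial algebras between $\FIN_{A \varp}$ and $[X]_{A \varp}$, and then pulling back the coloring to apply Carlson's theorem for located words on the source side.

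Write $X = \{p_0 < p_1 < \dots\}$ and, for $q \in \FIN_{A \varp}$, define
$$\iota(q) \;=\; \bigcup_{i \in \dom q} p_i[q(i)] \;\in\; [X]_{A \varp}.$$
Since the $p_i$'s form a block sequence, the unions above are disjoint and block-ordered, so $\iota$ is well-defined and is a bijection from $\FIN_{A \varp}$ onto $[X]_{A \varp}$. The two structural identities needed are: (i) $\iota(q_1 \cup q_2) = \iota(q_1) \cup \iota(q_2)$ whenever $q_1 < q_2$; and (ii) $\iota(q[a]) = \iota(q)[a]$ for every $a \in A \cup \{\varp\}$. Identity (i) is immediate from the definition. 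For (ii), the $\varp$-positions of $\iota(q)$ come exactly from those indices $i \in \dom q$ with $q(i) = \varp$, where $p_i[q(i)] = p_i$ contributes its original $\varp$'s; substituting $a$ at these positions turns each such $p_i = p_i[\varp]$ into $p_i[a]$, which is the $i$-th term of $\iota(q[a])$.

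Given $f : [X]_{A \varp} \to C$, define $g = f \circ \iota : \FIN_{A \varp} \to C$ and apply Carlson's theorem for located words to $g$ to obtain an infinite block sequence $Z \subseteq \FIN_{A \varp}$ such that $[Z]_{A \varp}$ is $g$-homogeneous. Set $Y = \iota(Z)$. Then $Y \subseteq [X]_{A \varp}$ is an infinite block sequence because $\iota$ preserves the block order, and combining the identities (i) and (ii) yields $[Y]_{A \varp} = \iota([Z]_{A \varp})$, which is therefore $f$-homogeneous.

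The main obstacle is the verification of (ii), since substitution on $[X]_{A \varp}$ acts simultaneously on $\varp$-positions coming from several of the $p_i$'s, and one has to check carefully that this non-local operation matches the local substitution on $\FIN_{A \varp}$ pre-composed with $\iota$. Once this bookkeeping is done, the rest of the reduction is purely formal and parallels the proof of the Strong Finite Union theorem almost verbatim.
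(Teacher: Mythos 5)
Your reduction has exactly the structure of the paper's argument: you take the canonical map $\iota(q)=\bigcup_{i\in\dom q}p_i[q(i)]$ from $\FIN_{A\cup\{\varp\}}$ onto $[X]_{A\cup\{\varp\}}$, pull the coloring back, apply the non-strong theorem, and push the solution forward, and your identities (i) and (ii) are the right bookkeeping facts. The problem is the blackbox you invoke. Carlson's theorem for located words, as stated in this paper, applies to colorings $g:\FIN_A\to C$ and produces a block sequence $Z$ with $[Z]_A$ homogeneous; it says nothing about colorings of $\FIN_{A\varp}$, and it does not give homogeneity of $[Z]_{A\varp}$. The statement you need at that step, namely that every finite coloring of $\FIN_{A\varp}$ admits a block sequence $Z$ with $[Z]_{A\varp}$ homogeneous, is Carlson's theorem for located \emph{variable} words, which the paper explicitly notes is only known through ultrafilters or the Ellis enveloping semigroup and whose reverse-mathematical status is listed among the open questions. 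Indeed, applying the statement as you have written it to the canonical block sequence $X=\{p_n\}$ with $\dom p_n=\{n\}$ and $p_n(n)=\varp$ (so that $[X]_{A\varp}=\FIN_{A\varp}$) would immediately yield the variable-word theorem, so no reduction to the word version alone can close this gap.

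The discrepancy originates in the printed statement, which should be read (as the paper's own proof does) with $f:[X]_A\to C$ and with $[Y]_A$ required to be $f$-homogeneous; the solution $Y$ is still a block sequence of located variable words inside $[X]_{A\varp}$, but only the extracted located \emph{words} are colored and made monochromatic. Under that reading your argument is correct and is essentially the paper's proof verbatim: define $g=f\circ\iota$ on $\FIN_A$, apply Carlson's theorem for located words to get $Z$ with $[Z]_A$ $g$-homogeneous, set $Y=\iota(Z)$, and use your identities (i) and (ii) (the latter only for letters $a\in A$) to check that $[Y]_A=\iota([Z]_A)$ is $f$-homogeneous. As written, however, the step ``apply Carlson's theorem for located words to $g:\FIN_{A\varp}\to C$ to obtain $[Z]_{A\varp}$ homogeneous'' claims strictly more than that theorem provides, and this is a genuine gap rather than a cosmetic one.
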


The proof of strong Carlson's theorem for located words from its weak version follows from the isomorphism~$\iota$ from $(\FIN_{A \cup \{\varp\}}, \cup)$ to $([X]_{A \cup \{\varp\}}, \cup)$ defined by~$\dom \iota(q)  = \bigsqcup_{n \in \dom q} \dom p_n$ and for~$m \in \dom q$ and $n \in \dom p_n$, $\iota(q)(n) = p_n(q(m))$, where~$X = \{ p_0 < p_1 < \dots \}$ is an infinite block sequence.

\begin{lemma}[$\RCA_0$]
Carlson's theorem for located words implies its strong version.	
\end{lemma}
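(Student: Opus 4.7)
The plan is to transport the problem along the partial-semigroup isomorphism $\iota : (\FIN_{A \cup \{\varp\}}, \cup) \to ([X]_{A \cup \{\varp\}}, \cup)$ exhibited in the paragraph preceding the lemma, and to apply the weak theorem to the pulled-back coloring, mirroring the Finite Union argument given in the previous subsection. Explicitly, $\iota$ sends a located (variable) word $q$ with $\dom q = \{m_0 < \dots < m_\ell\}$ to the disjoint union $p_{m_0}[q(m_0)] \cup \dots \cup p_{m_\ell}[q(m_\ell)]$, where $X = \{p_0 < p_1 < \dots\}$ is the given infinite block sequence.

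I would first verify three formal properties of $\iota$. It is a bijection onto $[X]_{A \cup \{\varp\}}$ that preserves disjoint unions and the block ordering on domains, since the $p_n$'s are themselves blocked. It restricts to bijections $\FIN_A \to [X]_A$ and $\FIN_{A\varp} \to [X]_{A\varp}$, because $\iota(q)$ contains an occurrence of $\varp$ exactly when some $q(m)$ does. Finally and crucially, $\iota$ commutes with letter substitution: $\iota(q)[a] = \iota(q[a])$ for every $q \in \FIN_{A\varp}$ and every $a \in A$. This last identity follows from unwinding the definition of $\iota$: substituting $a$ for $\varp$ in $\iota(q) = \bigcup_m p_m[q(m)]$ only affects the summands with $q(m) = \varp$, where $p_m[q(m)] = p_m$ is replaced by $p_m[a]$, and this is precisely the $m$-th summand of $\iota(q[a])$.

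Given these properties, I would define the pulled-back coloring $g := f \circ \iota$ on the appropriate subspace of $\FIN_{A \cup \{\varp\}}$ and apply Carlson's theorem for located words to $g$, obtaining an infinite block sequence $Z \subseteq \FIN_{A\varp}$ on which the relevant extraction set is $g$-homogeneous with some common color $c \in C$. Setting $Y := \iota(Z) \subseteq [X]_{A\varp}$, the order-preserving bijection property makes $Y$ into an infinite block sequence. For any element $v$ of $[Y]_{A\varp}$, writing $v = \iota(z_0)[a_0] \cup \dots \cup \iota(z_k)[a_k]$ with $z_i \in Z$ and letters $a_i \in A \cup \{\varp\}$, the homomorphism and substitution-commutation properties together yield $v = \iota(z_0[a_0] \cup \dots \cup z_k[a_k])$; since the argument lies in $[Z]_{A\varp}$, we conclude $f(v) = g(z_0[a_0] \cup \dots \cup z_k[a_k]) = c$.

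The only substantive point to check is the commutation with substitution, which reduces to a short case split on whether $a \in A$ or $a = \varp$ (the latter being trivial thanks to the convention $p[\varp] = p$). Everything else is formal bookkeeping of domains and indices. The whole construction is visibly computable from $X$ and $f$, so the proof goes through in $\RCA_0$ with no further axioms needed.
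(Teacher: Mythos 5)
Your construction is the same as the paper's: transport along the canonical substitution isomorphism $\iota$ determined by $X = \{p_0 < p_1 < \dots\}$, pull the coloring back to $g = f \circ \iota$, apply the weak theorem, and push the resulting block sequence $Z$ forward to $Y = \iota(Z)$. The properties you isolate (order- and union-preservation, restriction to bijections $\FIN_A \to [X]_A$ and $\FIN_{A\varp} \to [X]_{A\varp}$, and the commutation $\iota(q)[a] = \iota(q[a])$) are exactly what makes the paper's one-line verification $[Y]_A = \{\iota(p) : p \in [Z]_A\}$ legitimate, and the whole argument is indeed computable, hence fine over $\RCA_0$.

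The one place where you overreach is the final homogeneity check: you take $v \in [Y]_{A\varp}$, i.e.\ you allow the letters $a_i$ to range over $A \cup \{\varp\}$, and conclude $f(v) = c$ ``since the argument lies in $[Z]_{A\varp}$.'' But Carlson's theorem for located words only gives you $g$-homogeneity of $[Z]_A$, not of $[Z]_{A\varp}$; if some $a_i = \varp$ then $z_0[a_0] \cup \dots \cup z_k[a_k]$ is a located \emph{variable} word, where the weak theorem says nothing (and where $g$ is not even defined when $f$ colors only $[X]_A$). No transport argument can close this gap: homogeneity of $[Y]_{A\varp}$ for a coloring of $[X]_{A\varp}$ is a subspace form of Carlson's theorem for located variable words, which the paper points out has no known elementary proof. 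The statement of the strong version in the paper contains the same $A\varp$ slip, but its proof (like the rest of the equivalence in \Cref{prop:equiv-carlson-versions}) treats a coloring $f : [X]_A \to C$ and concludes that $[Y]_A$ is $f$-homogeneous. Restrict your last paragraph to letters $a_i \in A$ and conclude homogeneity of $[Y]_A$; with that correction your argument coincides with the paper's proof.
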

\begin{proof}
Given $X = \{ p_0 < p_1 < \dots \}$ and $f : [X]_A \to C$, let~$g : \FIN_A \to C$ be defined by $g(p) = f(\iota(p))$. By Carlson's theorem for located words, there is an infinite block sequence~$Z \subseteq \FIN_{A \varp}$ such that $[Z]_A$ is $g$-homogeneous. Let~$Y = \{ \iota(p) : p \in Z \}$. The set~$Y \subseteq [X]_{A \varp}$ is an infinite block sequence and $[Y]_A = \{ \iota(p) : p \in [Z]_A \}$, so $[Y]_A$ is $f$-homogeneous.
\end{proof}

\begin{theorem}[Strong Carlson for words]
Let $(w_n)_{n \in \NN}$ be an infinite sequence of variable words over~$A$ and let $f : \langle (w_{n})_{n \in \NN}\rangle_A \to C$ be a finite coloring.
There is an infinite sequence of variable words $(u_n)_{n \in \NN}$ such that $\langle (u_n)_{n \in \NN} \rangle_A \subseteq \langle (w_n)_{n \in \NN}\rangle_A$ and $\langle(u_{n})_{n \in \NN}\rangle_A$ is $f$-homogeneous.
\end{theorem}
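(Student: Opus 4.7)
The plan is to derive strong Carlson's theorem for words from its counterpart for located words, via a natural collapse map analogous to the one used in \Cref{prop:equiv-carlson-versions}.

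First, I would encode the sequence $(w_n)_{n \in \NN}$ as an infinite block sequence $X = \{p_0 < p_1 < \dots\} \subseteq \FIN_{A \varp}$ by setting $L_n = \sum_{m < n} |w_m|$ and defining each $p_n$ so that $\dom p_n = \{L_n, L_n + 1, \dots, L_n + |w_n| - 1\}$ with $p_n(L_n + j)$ equal to the $j$-th letter of $w_n$. Let $\pi$ denote the collapse map sending a located word in $\FIN_{A \cup \{\varp\}}$ to the string in $\Wop(A \cup \{\varp\})$ obtained by reading its values in order of increasing domain. A routine unwinding of the definitions shows that $\pi$ sends $p_{j_0}[a_0] \cup \dots \cup p_{j_{k-1}}[a_{k-1}]$ to $w_{j_0}[a_0]\, w_{j_1}[a_1] \dots w_{j_{k-1}}[a_{k-1}]$, and hence restricts to a bijection from $[X]_A$ onto $\langle (w_n)_{n \in \NN} \rangle_A$.

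Second, I would pull the coloring back by defining $g : [X]_A \to C$ via $g(p) = f(\pi(p))$, and then apply strong Carlson's theorem for located words to obtain an infinite block sequence $Y = \{q_0 < q_1 < \dots\} \subseteq [X]_{A \varp}$ such that $[Y]_A$ is $g$-homogeneous. Setting $u_n = \pi(q_n)$, each $u_n$ is a genuine variable word over $A$ since $q_n \in \FIN_{A \varp}$ takes the value $\varp$ somewhere in its domain.

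Finally, I would verify that $(u_n)_{n \in \NN}$ meets the required conclusion. The crucial observation is the commutation identity
$$
\pi\bigl(q_{j_0}[a_0] \cup \dots \cup q_{j_{k-1}}[a_{k-1}]\bigr) = u_{j_0}[a_0]\, u_{j_1}[a_1] \dots u_{j_{k-1}}[a_{k-1}],
$$
valid for all $j_0 < \dots < j_{k-1}$ and all $a_0, \dots, a_{k-1} \in A$. This immediately yields $\langle (u_n)_{n \in \NN} \rangle_A = \pi([Y]_A) \subseteq \pi([X]_A) = \langle (w_n)_{n \in \NN} \rangle_A$, while the $g$-homogeneity of $[Y]_A$ transfers directly to $f$-homogeneity of $\langle (u_n)_{n \in \NN} \rangle_A$. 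The main obstacle is purely bookkeeping: one must verify that $\pi$ respects both substitution of letters for $\varp$ and concatenation simultaneously, so that extraction inside $(u_n)$ matches taking finite unions inside $Y$ under the collapse. Once this compatibility is nailed down, the reduction formalises in $\RCA_0$ as straightforwardly as the corresponding reduction for located words proven just above.
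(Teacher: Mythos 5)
Your reduction is correct, and it proves the statement, but it is routed slightly differently from the paper. The paper derives strong Carlson for words directly from the \emph{plain} Carlson's theorem for located words, using the collapse morphism $\iota : \FIN_{A \cup \{\varp\}} \to \langle (w_n)_{n \in \NN}\rangle_{A \cup \{\varp\}}$ given by $\iota(p) = w_{n_0}[p(n_0)] \cdots w_{n_{k-1}}[p(n_{k-1})]$ for $\dom p = \{n_0 < \dots < n_{k-1}\}$: here the \emph{domain} of a located word records which $w_n$'s are used and its \emph{values} record the substituted letters, so the pullback coloring $g = f \circ \iota$ is defined on all of $\FIN_A$ and a single application of the located-word theorem finishes the proof. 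You instead encode the letters of the $w_n$'s positionally, obtaining a block sequence $X \subseteq \FIN_{A\varp}$ on disjoint intervals, and since your reading map $\pi$ only lands in $\langle (w_n)\rangle_{A\cup\{\varp\}}$ when restricted to $[X]_{A\cup\{\varp\}}$, you must invoke the \emph{strong} located-word theorem on $X$. This is legitimate and non-circular, since the strong located version is derived from the plain one independently of the present statement (indeed your $\pi$ composed with the canonical isomorphism $\iota_X$ is essentially the paper's $\iota$), and your commutation identity is the same routine compatibility of $\pi$ with unions and with substitution for $\varp$ that the paper leaves implicit; both reductions formalize in $\RCA_0$. Two small points: $\pi$ is surjective from $[X]_A$ onto $\langle (w_n)\rangle_A$ but need not be injective (distinct extractions can produce the same string, e.g.\ when several $w_n$ coincide), though only surjectivity and functoriality are used, so this is harmless; and your construction in fact gives $u_n \in \langle (w_n)\rangle_{A\varp}$, slightly more than the stated conclusion requires. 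The paper's route is a bit more economical (no detour through the strong located version); yours has the merit of making the semigroup embedding of $(w_n)$ into $\FIN_{A\varp}$ explicit.
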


Strong Carlson's theorem for words follows from Carlson's theorem for located words, using the surjective morphism $\iota$ from $(\FIN_{A \cup \{\varp\}}, \cup)$ to $(\langle (w_n)_{n \in \NN}\rangle_{A \cup \{\varp\}}, \cdot)$ defined by collapsing the domains as follows: $\iota(p) = w_{n_0}[p(n_0)] \cdots w_{n_{k-1}}[p(n_{k-1})]$ where~$\dom p = \{n_0 < \dots < n_{k-1}\}$.

\begin{lemma}[$\RCA_0$]
Carlson's theorem for located words implies strong Carlson's theorem for words.
\end{lemma}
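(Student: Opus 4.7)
The plan is to mirror the proof of the located-word version just above, using the surjective morphism $\iota$ from $(\FIN_{A \cup \{\varp\}}, \cup)$ to $(\langle (w_n)_{n \in \NN}\rangle_{A \cup \{\varp\}}, \cdot)$ defined right before the statement as the bridge. Given a coloring $f : \langle(w_n)_{n \in \NN}\rangle_A \to C$, I would pull $f$ back through $\iota$ to a finite coloring $g : \FIN_A \to C$ by setting $g(p) = f(\iota(p))$; this is well-defined because $\iota$ sends $\FIN_A$ into $\langle (w_n)_{n \in \NN}\rangle_A$ (collapsing a located word uses no $\varp$). Then I would apply Carlson's theorem for located words to $g$ to obtain an infinite block sequence $X = \{p_0 < p_1 < \dots\} \subseteq \FIN_{A \varp}$ such that $[X]_A$ is $g$-homogeneous, and set $u_n = \iota(p_n)$.

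Each $u_n$ is a variable word: since $p_n$ takes the value $\varp$ at some $m \in \dom p_n$, the factor $w_m[p_n(m)] = w_m[\varp] = w_m$ appears in $\iota(p_n)$ and itself contains $\varp$. The crux of the argument is then the identity
$$\iota\bigl(p_{n_0}[a_0] \cup \cdots \cup p_{n_{k-1}}[a_{k-1}]\bigr) = u_{n_0}[a_0]\, u_{n_1}[a_1] \cdots u_{n_{k-1}}[a_{k-1}]$$
for $p_{n_0} < \cdots < p_{n_{k-1}}$ in $X$ and $a_0, \dots, a_{k-1} \in A$. This should follow by unpacking the definition of $\iota$ and observing that substituting $a_i$ for $\varp$ in $u_{n_i}$ operates factorwise on each $w_m[p_{n_i}(m)]$: factors with $p_{n_i}(m) \in A$ are already words and stay unchanged, while factors $w_m[\varp] = w_m$ become $w_m[a_i]$, which matches the corresponding factor of $\iota(p_{n_i}[a_i])$.

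Granting this identity, every word $u_{n_0}[a_0] \cdots u_{n_{k-1}}[a_{k-1}] \in \langle (u_n)_{n \in \NN}\rangle_A$ lies in $\iota([X]_A) \subseteq \langle (w_n)_{n \in \NN}\rangle_A$ and its $f$-color equals the $g$-color of $p_{n_0}[a_0] \cup \cdots \cup p_{n_{k-1}}[a_{k-1}] \in [X]_A$, which is constant by choice of $X$. Thus $\langle (u_n)_{n \in \NN}\rangle_A \subseteq \langle (w_n)_{n \in \NN}\rangle_A$ is $f$-homogeneous, as desired.

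I do not expect a real obstacle: the only content is checking that $\iota$ respects disjoint union (which is essentially its definition, since disjoint union of domains corresponds to concatenation of the resulting words) and respects the substitution $p \mapsto p[a]$ in the sense used above. Both are mechanical, and the verification is formalizable in $\RCA_0$ since $\iota$, $g$, and $(u_n)_{n \in \NN}$ are all computable from $(w_n)_{n \in \NN}$, $f$, and $X$.
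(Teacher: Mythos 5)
Your proposal is correct and follows essentially the same route as the paper's proof: pull $f$ back through $\iota$ to a coloring $g$ on $\FIN_A$, apply Carlson's theorem for located words, and push the resulting block sequence forward via $u_n = \iota(p_n)$. The only difference is that you spell out the factorwise identity $\iota(p_{n_0}[a_0] \cup \dots \cup p_{n_{k-1}}[a_{k-1}]) = u_{n_0}[a_0] \cdots u_{n_{k-1}}[a_{k-1}]$, which the paper leaves implicit in the claim $\langle(u_n)_{n \in \NN}\rangle_A = \{\iota(p) : p \in [X]_A\}$.
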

\begin{proof}
Fix $(w_n)_{n \in \NN}$ and~$f$. Let~$g : \FIN_A \to C$ be defined by~$g(p) = f(\iota(p))$.
By Carlson's theorem for located words, there is an infinite block sequence~$X = \{p_0 < p_1 < \dots \} \subseteq \FIN_{A\varp}$ such that $[X]_A$ is $g$-homogeneous. Let~$(u_n)_{n \in \NN}$ be defined by
$u_n = \iota(p_n)$. Then $\langle(u_{n})_{n \in \NN}\rangle_A = \{ \iota(p) : p \in [X]_A\}$, hence $\langle(u_{n})_{n \in \NN}\rangle_A$ is $f$-homogeneous.
\end{proof}

As for Hindman's theorem, the strong version of Carlson's theorem for words implies Carlson's theorem for located words in a natural way. Indeed, let~$(w_n)_{n \in \NN}$ be the sequence of variable words over~$A$ defined by~$w_n = \varp \cdots \varp$ with $|w_n| = 2^n$. Any element in $\langle (w_n)_{n \in \NN} \rangle_{A \cup \{\varp\}}$ can be uniquely written in the form $w_{n_0}[a_0] \cdots w_{n_{k-1}}[a_{k-1}]$, where~$n_0 < \dots < n_{k-1} \in \NN$ and $a_0, \dots, a_{k-1} \in A \cup \{\varp\}$. Thus, there is a natural isomorphism~$\iota$ from $(\langle (w_n)_{n \in \NN} \rangle_{A \cup \{\varp\}}, \cdot)$ to $(\FIN_{A \cup \{\varp\}}, \cup)$ defined by $\iota(w_{n_0}[a_0] \cdots w_{n_{k-1}}[a_{k-1}]) = p$ such that $\dom(p) = \{n_0 < \dots < n_{k-1}\}$ and $p(n_i) = a_i$.

\begin{lemma}[$\RCA_0$]
Strong Carlson's theorem for words implies Carlson's theorem for located words.
\end{lemma}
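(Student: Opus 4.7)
The plan is to transport an instance of Carlson's theorem for located words through the isomorphism $\iota$ described just before the lemma statement. Given a coloring $f : \FIN_A \to C$, I pull $f$ back to a coloring $g$ of $\langle (w_n)_{n \in \NN} \rangle_A$, apply Strong Carlson for words to obtain a homogeneous extraction sequence, and then push that sequence forward through $\iota$ to obtain the desired block sequence in $\FIN_{A\varp}$.

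Concretely, define $g : \langle (w_n)_{n \in \NN} \rangle_A \to C$ by $g(u) = f(\iota(u))$. The restriction of $\iota$ to $\langle (w_n)_{n \in \NN} \rangle_A$ is a bijection onto $\FIN_A$, since any $p \in \FIN_A$ with $\dom p = \{n_0 < \dots < n_{k-1}\}$ has the unique preimage $w_{n_0}[p(n_0)] \cdots w_{n_{k-1}}[p(n_{k-1})]$. By Strong Carlson for words applied to $(w_n)_{n \in \NN}$ and $g$, there is an infinite sequence $(u_n)_{n \in \NN}$ of variable words with $\langle (u_n)_{n \in \NN} \rangle_A \subseteq \langle (w_n)_{n \in \NN} \rangle_A$ and $\langle (u_n)_{n \in \NN} \rangle_A$ $g$-homogeneous. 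The inclusion forces each $u_n$ to be extractable from $(w_n)_{n \in \NN}$ as a variable word using some set of $w$-indices $J_n \subseteq \NN$, and further forces $J_0 < J_1 < \cdots$ in the block-sequence sense; otherwise a concatenation $u_n[a]\,u_{n+1}[b]$ would fail to be an extraction from $(w_n)_{n \in \NN}$.

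Setting $X = \{\iota(u_n) : n \in \NN\}$, we have $\dom \iota(u_n) = J_n$, so $X$ is an infinite block sequence in $\FIN_{A\varp}$, each of whose elements lies in $\FIN_{A\varp}$ because $u_n$ contains an occurrence of $\varp$. Any $p \in [X]_A$ can be written as $\iota(u_{n_0})[a_0] \cup \dots \cup \iota(u_{n_{k-1}})[a_{k-1}]$ for some $n_0 < \dots < n_{k-1}$ and $a_i \in A$; compatibility of $\iota$ with concatenation on disjoint ordered blocks and with the substitution $\cdot[a]$ then gives $p = \iota(u_{n_0}[a_0] \cdots u_{n_{k-1}}[a_{k-1}])$. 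Since the word on the right lies in $\langle (u_n)_{n \in \NN} \rangle_A$, we have $f(p) = g(u_{n_0}[a_0] \cdots u_{n_{k-1}}[a_{k-1}])$, which is constant in $p$. All the manipulations are uniformly computable in $f$ and the parameters of $\iota$, so the argument is valid in $\RCA_0$.

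The construction is essentially a routine transport through $\iota$, and I do not anticipate a serious obstacle. The only step that deserves care is checking that $\iota$ is a morphism for the two structural operations involved, namely concatenation of words (matching union of located words on disjoint ordered domains) and substitution $u \mapsto u[a]$ (matching the pointwise substitution on the located word). Both are immediate from the definition of $\iota$ together with the choice $w_n = \varp \cdots \varp$ of length $2^n$, which guarantees that every word in $\langle (w_n)_{n \in \NN}\rangle_{A \cup \{\varp\}}$ admits a unique parsing.
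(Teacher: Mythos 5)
Your proof is correct and takes essentially the same route as the paper's: pull the coloring back through $\iota$, apply strong Carlson's theorem for words to $(w_n)_{n\in\NN}$, and push the resulting sequence $(u_n)_{n\in\NN}$ forward through $\iota$ to get the block sequence. The only difference is that you explicitly verify the step the paper leaves implicit, namely that the inclusion $\langle (u_n)_{n\in\NN}\rangle_A \subseteq \langle (w_n)_{n\in\NN}\rangle_A$ forces each $u_n$ to be an extraction from $(w_n)_{n\in\NN}$ with increasing index blocks, so that $\{\iota(u_n) : n \in \NN\}$ is indeed a block sequence in $\FIN_{A\varp}$.
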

\begin{proof}
Let~$(w_n)_{n \in \NN}$ be the sequence of variable words over~$A$ defined as above and let~$\iota$ be the corresponding isomorphism. Given a finite coloring~$f : \FIN_A \to C$, let $g : \langle (w_n)_{n \in \NN} \rangle_A \to C$ be defined by $g(w) = f(\iota(w))$. By strong Carlson's theorem for words,
there is an infinite sequence of variable words $(u_n)_{n \in \NN}$ such that $\langle (u_n)_{n \in \NN} \rangle_A \subseteq \langle (w_n)_{n \in \NN}\rangle_A$ and $\langle(u_{n})_{n \in \NN}\rangle_A$ is $g$-homogeneous. Let~$Y = \{ \iota(u_n) : n \in \NN \}$. Then $Y$ is an infinite block sequence and $[Y]_A = \{ \iota(w) : w \in \langle(u_{n})_{n \in \NN}\rangle_A \}$, hence $[Y]_A$ is $f$-homogeneous.
\end{proof}

Put altogether, we obtain the following equivalences over~$\RCA_0$.

%\begin{definition}
%Let $\ell_0, \ell_1 \in \NN \cup \{\NN\}$. Given two finite or infinite sequences of variable words~$(v_n)_{n < \ell_0}, (w_n)_{n < \ell_1}$, we write
%$$(v_n)_{n < \ell_0} \leq_\EVW (w_n)_{n < \ell_1}$$
%if there is a block sequence $\langle F_n : n < \ell_0 \rangle \subseteq \P_f(\NN)$ such that, letting~$F_s = \{ n_0 < \dots < n_{r-1} \}$, $v_s = w_{n_0}[u(0)]\dots w_{n_{r-1}}[u(r-1)]$ for some variable word~$u$ of length~$r$.
%\end{definition}

%\begin{theorem}[Strong Carlson for words]
%Let $(w_n)_{n \in \NN}$ be an infinite sequence of variable words over a finite alphabet~$A$ and let $f : \langle (w_{n})_{n \in \NN}\rangle_A \to C$ be a finite coloring.
%There is an infinite sequence of variable words $(u_n)_{n \in \NN} \leq_\EVW (w_n)_{n \in \NN}$ such that $\langle(u_{n})_{n \in \NN}\rangle_A$ is $f$-homogeneous.
%\end{theorem}

\begin{proposition}\label[proposition]{prop:equiv-carlson-versions}
The following statements are equivalent over~$\RCA_0$:
\begin{enumerate}
	\item Carlson's theorem for located words
	\item Strong Carlson's theorem for located words
	\item Strong Carlson's theorem for words
\end{enumerate}
\end{proposition}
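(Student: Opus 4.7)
The plan is to close the cycle of implications among (1), (2), (3) using the work already done in this subsection. The three lemmas immediately preceding this proposition directly give $(1) \Rightarrow (2)$, $(1) \Rightarrow (3)$, and $(3) \Rightarrow (1)$, so the only missing link is $(2) \Rightarrow (1)$.

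The implication $(2) \Rightarrow (1)$ should be essentially trivial: the weak form is a special instance of the strong form applied to a canonical block sequence. Specifically, I would take $X_0 = \{p_n : n \in \NN\} \subseteq \FIN_{A\varp}$ with $\dom p_n = \{n\}$ and $p_n(n) = \varp$, and verify that $[X_0]_A = \FIN_A$, using the identity
$$q = p_{n_0}[q(n_0)] \cup \dots \cup p_{n_{k-1}}[q(n_{k-1})]$$
for any $q \in \FIN_A$ with $\dom q = \{n_0 < \dots < n_{k-1}\}$. Hence any coloring $f : \FIN_A \to C$ can be viewed as a coloring of $[X_0]_A$, and the homogeneous block sequence $Y \subseteq [X_0]_{A\varp} \subseteq \FIN_{A\varp}$ produced by (2) is already a witness for (1), since $[Y]_A \subseteq [X_0]_A = \FIN_A$ and $Y$ is a block sequence in $\FIN_{A\varp}$. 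The argument is carried out in $\RCA_0$ since $X_0$ is primitive recursive.

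I do not foresee any genuine obstacle: the substantive work---the collapsing isomorphisms $\iota$ from $(\FIN_{A \cup \{\varp\}}, \cup)$ to $([X]_{A \cup \{\varp\}}, \cup)$ in the block-sequence case, and to $(\langle (w_n)_{n \in \NN}\rangle_{A \cup \{\varp\}}, \cdot)$ in the word case---has already been carried out in the three preceding lemmas. This proposition just records that combining them with the trivial $(2) \Rightarrow (1)$ yields $(1) \Leftrightarrow (2) \Leftrightarrow (3)$ over $\RCA_0$.
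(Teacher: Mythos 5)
Your proposal is correct and takes essentially the same route as the paper: both invoke the three preceding lemmas for $(1)\rightarrow(2)$, $(1)\rightarrow(3)$ and $(3)\rightarrow(1)$, and close the cycle with the observation that $(2)\rightarrow(1)$ is trivial because the strong version specializes to the weak one. Your explicit canonical block sequence $X_0$ of singleton located variable words (with $[X_0]_A = \FIN_A$) merely spells out the specialization that the paper records in one line.
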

\begin{proof}
We have proven $(1) \rightarrow (3)$, $(1) \rightarrow (2)$ and $(3) \rightarrow (1)$. Last, $(2) \rightarrow (1)$ since strong Carlson's theorem for located words is a generalization of Carlson's theorem for words.
\end{proof}

As mentioned at the start of the section, we leave the following question open.

\begin{question}
Does $\CT$ for words imply $\CT$ for located words over~$\RCA_0$?	
\end{question}

\section{Towsner-style proof of Carlson's theorem for located words}\label[section]{sect:towsner-ct}

As mentioned in the introduction, Hindman's theorem admits multiple proofs: an ultrafilter one from Galvin and Glazer (see Hindman and Strauss~\cite{hindman2012algebra}), a simple one from Baumgartner~\cite{baumgartner1974short} and a proof by Towsner~\cite{towsner2012simple}. The latter one yields the best known upper bound in terms of reverse mathematics of Hindman's theorem.

Carlson's theorem for words original proof involves ultrafilters, and Karagiannis~\cite{karagiannis2013combinatorial} later gave a Baumgartner-style proof of it.
In this section, we give a Towsner-style proof of Carlson's theorem for located words. Its reverse mathematical analysis shows that Carlson's theorem for located words holds in~$\ACA^+_0$. We shall refine this analysis in \Cref{sect:aet} by showing that Carlson's theorem for located words follows from an iterated version of the Finite Union theorem, which is also known to hold in~$\ACA^+_0$.

We will be using the following theorem as a blackbox.
\begin{theorem}[Hales-Jewett, $\RCA_0$]\label[theorem]{thm:hales-jewett}
Let~$A$ be a finite alphabet and $f : \FIN_A \to C$ be a finite coloring.
Then there is a located variable word~$p \in \FIN_{A \varp}$ such that $[p]_A$ is $f$-homogeneous.
\end{theorem}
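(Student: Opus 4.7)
The plan is to reduce the statement to the classical combinatorial-line form of the Hales--Jewett theorem (every finite coloring of $A^N$ admits a monochromatic variable word of length $N$, for some $N$ depending only on $|A|$ and the number of colors), which for a fixed finite alphabet and fixed number of colors is a $\Pi^0_2$ statement provable in $\RCA_0$ via, e.g., Shelah's elementary proof.

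Concretely, I would first fix an appropriate Hales--Jewett dimension $N = N(|A|,|C|)$, then pick any $N$-element set $D \subseteq \NN$, say $D = \{0,1,\dots,N-1\}$. Enumerating $D$ in increasing order yields a canonical bijection $\phi$ between $A^N$ and the set of located words with domain $D$, which extends naturally to a bijection between variable words of length $N$ over $A$ and located variable words with domain $D$, compatible with the substitution operation $w \mapsto w[a]$ on both sides. I would then transfer $f$ to a coloring $g : A^N \to C$ via $g(s) = f(\phi(s))$, apply classical Hales--Jewett to $g$ to extract a variable word $w$ of length $N$ with $\{w[a] : a \in A\}$ $g$-monochromatic, and set $p = \phi(w) \in \FIN_{A\varp}$. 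Since the block sequence $\{p\}$ is a singleton, one has $[p]_A = \{p[a] : a \in A\}$, which is $f$-homogeneous by construction.

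The main obstacle is not in this reduction, which is essentially notational, but in the blackbox itself: one must ensure that the chosen proof of classical Hales--Jewett formalizes in $\RCA_0$. Shelah's inductive argument, being primitive recursive and relying only on bounded induction, goes through without difficulty, so the reverse-mathematical content of the theorem really is captured by invoking classical Hales--Jewett as a lemma.
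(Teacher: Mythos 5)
Your reduction is correct and is exactly what the paper has in mind: it treats this statement as a blackbox, remarking only that it follows from the finite combinatorial Hales--Jewett theorem (which needs less than $\BSig_2$), and indeed later introduces the bound $HJ(k,\ell)$ for precisely this finite version. Transferring $f$ to a coloring of $A^N$ via the canonical bijection with located words on a fixed $N$-element domain, applying finite Hales--Jewett (provable in $\RCA_0$ via Shelah's primitive recursive argument), and pulling the variable word back is the intended justification, so your proposal matches the paper's approach.
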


The previous theorem does not require $\BSig_2$ since it admits a finite combinatorial version which requires less induction.

\begin{definition}
Let~$X \subseteq \FIN_{A \varp}$ be an infinite block sequence and let~$f : [X]_A \to C$ be a coloring.
A block sequence~$Y \subseteq [X]_{A \varp}$ is
\begin{enumerate}
	\item \emph{weakly $f$-thin for color~$i \in C$} if for every~$p \in [Y]_{A \varp}$, there is some~$a \in A$ such that $f(p[a]) \neq i$
	\item \emph{$f$-thin for color~$i \in C$} if for every~$p \in [Y]_A$, $f(p) \neq i$.
\end{enumerate}
\end{definition}

\begin{lemma}[$\RCA_0$]\label[lemma]{lem:vw-large-to-homogeneous}
Let $X \subseteq \FIN_{A \varp}$ be an infinite block sequence and let $f : [X]_A \to C$ be a coloring such that $X$ is weakly $f$-thin for some color~$i \in C$. Then there is an infinite block sequence $Y \subseteq [X]_{A \varp}$ which is $f$-thin for color~$i$.
\end{lemma}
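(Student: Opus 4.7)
The plan is to construct $Y = \{q_0 < q_1 < \dots\}$ as a block sequence inside $[X]_{A\varp}$ by recursion on~$n$, maintaining the invariant that $f(t) \neq i$ for every $t \in [\{q_0, \dots, q_{n-1}\}]_A$. At stage~$n$, let $S_n = [\{q_0, \dots, q_{n-1}\}]_A \cup \{\emptyset\}$, which is finite, and let $X_n = \{p \in X : p > q_{n-1}\}$, which is still an infinite block sequence. Every element of $[\{q_0, \dots, q_n\}]_A$ not already in $[\{q_0, \dots, q_{n-1}\}]_A$ has the form $s \cup q_n[a]$ for some $s \in S_n$ and $a \in A$, so the goal is to choose $q_n \in [X_n]_{A\varp}$ such that $f(s \cup q_n[a]) \neq i$ for every such pair~$(s,a)$.

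To find such a $q_n$, I apply Hales--Jewett (\Cref{thm:hales-jewett}) to the coloring $h : [X_n]_A \to C^{S_n}$ defined by $h(t) = (f(s \cup t))_{s \in S_n}$. Concretely, writing $X_n = \{p_0 < p_1 < \dots\}$, the natural bijection $\iota : \FIN_{A \cup \{\varp\}} \to [X_n]_{A \cup \{\varp\}}$ sending~$q$ with $\dom q = \{n_0 < \dots < n_{k-1}\}$ to $p_{n_0}[q(n_0)] \cup \dots \cup p_{n_{k-1}}[q(n_{k-1})]$ restricts to bijections between the word and variable-word components and satisfies $\iota(q[a]) = \iota(q)[a]$ for every $a \in A$. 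Applying \Cref{thm:hales-jewett} to $h \circ \iota : \FIN_A \to C^{S_n}$ and transporting the resulting variable word back through~$\iota$ yields some $q_n \in [X_n]_{A\varp}$ for which $h(q_n[a])$ does not depend on~$a \in A$; write $(c_s)_{s \in S_n}$ for its constant value.

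To verify that $c_s \neq i$ for every $s \in S_n$, I apply weak $f$-thinness to the located variable word $s \cup q_n$. Since $s \in [X]_A$, $q_n \in [X]_{A\varp}$, and the two sit on disjoint blocks of~$X$ with $s < q_n$, the union $s \cup q_n$ lies in $[X]_{A\varp}$. Weak thinness then supplies some $a \in A$ with $f((s \cup q_n)[a]) \neq i$, and because $s$ contains no~$\varp$ this equals $f(s \cup q_n[a]) = c_s$; hence $c_s \neq i$. Therefore $f(s \cup q_n[a]) \neq i$ uniformly over all $(s, a) \in S_n \times A$, and the invariant is preserved.

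The main obstacle is precisely that weak thinness on its own only yields a witness~$a$ depending on the prefix~$s$, while the construction requires a single $q_n$ that works simultaneously for all $s \in S_n$ and all $a \in A$. The Hales--Jewett step is what removes the $a$-dependence of $f(s \cup q_n[a])$ for each fixed~$s$, so that a single invocation of weak thinness per~$s$ suffices to close the inductive step, in the spirit of Towsner's thinness-to-homogeneity arguments for Hindman's theorem.
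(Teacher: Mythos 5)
Your proof is correct and follows essentially the same route as the paper's: at each stage you apply the Hales--Jewett theorem to the finite product coloring $t \mapsto (f(s \cup t))_{s \in S_n}$ pulled back through the canonical isomorphism, and then use weak $f$-thinness of~$X$ on the located variable word $s \cup q_n \in [X]_{A\varp}$ to see that the resulting constant color avoids~$i$. The paper's proof does exactly this with $S_n = [F]_A \cup \{\emptyset\}$ and $\iota_{X-F}$, so there is no substantive difference.
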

\begin{proof}
We build a sequence of located variable words $q_0 < q_1 < \dots$ by induction.
For every infinite block sequence~$X$, let $\iota_X : \FIN_{A \cup \{\varp\}} \to [X]_{A \cup \{\varp\}}$ be the canonical isomorphism.

%For every infinite block sequence~$Y = \{ p_0 < p_1 < \dots \}$, let $\pi_Y : \FIN_{A \cup \{\varp\}} \to \FIN_{A \cup \{\varp\}}$ be defined by
%$$\pi_N(v) = w_N[v(0)]w_{N+1}[v(1)]\dots w_{N+|v|-1}[v(|v|-1)]$$
Let~$g : \FIN_A \to C$ be defined by~$g(p) = f(\iota_X(p))$.
By the Hales-Jewett theorem (\Cref{thm:hales-jewett}), there is a located variable word~$p$ over~$A$ such that $[p]_A$ is $g$-homogeneous. Let $q_0 = \iota_X(p)$
Then $[q_0]_A$ is $f$-homogeneous. Since~$q_0 \in X$, then there is some~$a \in A$ such that $f(q_0[x]) \neq i$, hence $\{q_0\}$ is $f$-thin for color~$i$.

Assume $q_0 < \dots < q_{n-1}$ are located variable words over~$A$ such that $F = \{q_0, \dots, q_{n-1}\}$ is $f$-thin for color~$i$.
Let~$g : \FIN_A \to C^{|[F]_A|+1}$ be defined by
$$
g(p) = \langle f(q \cup \iota_{X - F}(p)) : q \in [F]_A \cup \{\emptyset\} \rangle
$$
By the Hales-Jewett theorem (\Cref{thm:hales-jewett}), there is a located variable word~$p$ over~$A$ such that $[p]_A$ is $g$-homogeneous. Let $q_n = \iota_{X - F}(p)$.
Fix some~$q \in [F]_A \cup \{\emptyset\}$. Then $[q \cup q_n]_A$ is $f$-homogeneous. Since $q \cup q_n \in X$, then there is some~$a \in A$ such that $f(q \cup q_n[a]) \neq i$, thus $[q \cup q_n]_A$ is $f$-thin for color~$i$. Since it is the case for every~$q \in [F]_A \cup \{\emptyset\}$, then $F \cup \{q_n\}$ is $f$-thin for color~$i$.
\end{proof}

The following definitions are direct adaptations of Towsner's notions of half matches and full matches to variable words (see \cite{towsner2012simple}).

\begin{definition}
Let $X \subseteq \FIN_{A \varp}$ be an infinite block sequence and let $f : [X]_A \to C$ be a coloring. Let~$F \subseteq [X]_{A \varp}$ be a finite set and $Y \subseteq [X - F]_{A\varp}$ be an infinite block sequence. We say that
\begin{enumerate}
	\item $F$ \emph{half-matches $Y$ for color~$i \in C$} if for every~$q \in [Y]_A$ such that $f(q) = i$, there is some~$p \in F$ such that for every~$a \in A$, $f(p[a] \cup q) = i$.
	\item $F$ \emph{half-matches} $Y$ if it half-matches $Y$ for every color~$i \in C$
	\item $F$ \emph{full-matches} $Y$ if for every $q \in [Y]_A$, there is some~$p \in F$ such that for every~$a \in A$, $f(p[a]) = f(p[a] \cup q) = f(q)$.
\end{enumerate}
\end{definition}

\begin{lemma}[$\RCA_0$]\label[lemma]{lem:carlson-half-case}
Let $X \subseteq \FIN_{A \varp}$ be an infinite block sequence, let $f : [X]_A \to C$ be a coloring, and $i \in C$ be a color. There is a finite set~$F \subseteq [X]_{A \varp}$ and an infinite block sequence $Y \subseteq [X - F]_{A \varp}$ such that $F$ half-matches $Y$ for color~$i$.
\end{lemma}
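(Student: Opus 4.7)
The plan is to argue by a case split on whether color $i$ can be "suppressed" on some sub-block-sequence of $[X]_{A\varp}$. In the suppressible case, the empty $F$ works; in the other case, we iteratively build $F$ using $i$-saturators and repeated thinning via \Cref{lem:vw-large-to-homogeneous}.

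\textbf{Easy case.} Suppose some infinite block sequence $Z \subseteq [X]_{A\varp}$ is weakly $f$-thin for color $i$. Then \Cref{lem:vw-large-to-homogeneous} applied to $Z$ produces an infinite block sequence $Y \subseteq [Z]_{A\varp}$ which is $f$-thin for color $i$, so $[Y]_A$ contains no element of color $i$ at all. Taking $F = \emptyset$, the half-match condition holds vacuously.

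\textbf{Hard case.} Suppose no infinite block sequence in $[X]_{A\varp}$ is weakly $f$-thin for color $i$. Equivalently, every infinite block sequence $Z \subseteq [X]_{A\varp}$ contains an \emph{$i$-saturator}, namely some $p \in [Z]_{A\varp}$ with $f(p[a]) = i$ for every $a \in A$. I will build $F = \{p_0 < \dots < p_{k-1}\}$ and a nested chain $X = X_0 \supseteq X_1 \supseteq \dots$ of infinite block sequences with $X_n \subseteq [X - F_n]_{A\varp}$, initialised by $F_0 = \emptyset$, $X_0 = X$. At stage $n$, if $F_n$ already half-matches $X_n$ for color $i$ the construction halts, outputting $F = F_n$ and $Y = X_n$. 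Otherwise there is a witness $q \in [X_n]_A$ with $f(q) = i$ such that no element of $F_n$ covers $q$; pick an $i$-saturator $p_n \in [X_n]_{A\varp}$ above $q$ using the case hypothesis applied to the tail of $X_n$, set $F_{n+1} = F_n \cup \{p_n\}$, and then apply \Cref{lem:vw-large-to-homogeneous} finitely many times, to the colorings of $[X_n - \{p_n\}]_A$ of the form $q' \mapsto (f(p[a] \cup q'))_{p \in F_{n+1},\,a \in A}$ (more precisely, to each bad "join-pattern" individually), in order to thin $X_n$ into $X_{n+1}$ so that the specific bad pattern witnessed by $q$ no longer appears on $[X_{n+1}]_A$.

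\textbf{Main obstacle.} The crux is the termination of this iteration. The natural potential to track is the finite set of "bad join-patterns" $\tau \in C^{A}$ that can still arise as $\tau(a) = f(p[a] \cup q')$ for some $q' \in [X_n]_A$ of color $i$ and some distinguished $p \in F_n$ failing to witness the half-match for $q'$. Each thinning step strictly removes one such pattern from the set of patterns realisable on $[X_{n+1}]_A$, and the set of potential patterns is a subset of the finite set $C^{A}$, bounded in size by $|C|^{|A|}$. Hence the iteration halts within that many stages, producing the desired finite $F$ and infinite $Y$. Verifying that each individual application of \Cref{lem:vw-large-to-homogeneous} goes through in $\RCA_0$ for the reindexed colorings is routine, since reindexing by the canonical isomorphism $\iota_{X_n}$ used in the proof of that lemma preserves the coloring structure.
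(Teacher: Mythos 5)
Your easy case is fine and matches the endgame of the paper's argument: once a weakly $f$-thin block sequence exists, \Cref{lem:vw-large-to-homogeneous} upgrades it to an $f$-thin one and the half-match is vacuous. The gap is in the hard case, at the thinning step. \Cref{lem:vw-large-to-homogeneous} does not let you delete an arbitrary color: its hypothesis is that the block sequence is already \emph{weakly} thin for the color to be removed, and you never verify (nor is it true in general) that $X_n$ is weakly thin, for the auxiliary coloring $q' \mapsto (f(p[a]\cup q'))_{p \in F_{n+1}, a \in A}$, with respect to the bad join-pattern you want to eliminate. There may be a located variable word $r$ in the span all of whose instances $r[b]$ realise that pattern, in which case no sub-block-sequence avoids it and the step ``so that the specific bad pattern witnessed by $q$ no longer appears'' is unavailable. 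Note also that the $i$-saturators you collect only constrain the values $f(p_n[a])$, whereas half-matching is about the joins $f(p_n[a]\cup q')$ with later witnesses $q'$; the saturator property contributes nothing toward covering such $q'$, so the whole burden falls on the unjustified thinning step. Finally, even granting the thinning, your termination potential is not monotone: the bad patterns are indexed by pairs (element of $F_n$, pattern in $C^A$), and each stage adds a new element $p_n$ which can realise patterns previously ``removed'' for older elements, so the bound $|C|^{|A|}$ on the number of stages does not follow as stated.

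For comparison, the paper uses a different dichotomy that avoids these issues: either there is a \emph{finite} block sequence $F$ such that every one-letter instance $q[a]$, for $q \in [X-F]_{A\varp}$ and $a \in A$, is covered by some $p \in [F]_{A\varp}$ with $f(p[b]\cup q[a]) = i$ for all $b \in A$ --- in which case the finite set $[F]_{A\varp}$ half-matches $X-F$ outright --- or this fails for every finite $F$, and then one builds, block by block, witnesses $p_n$ with letters $a_n$ such that the paired sequence $q_n = p_{2n} \cup p_{2n+1}[a_{2n+1}]$ is weakly $f$-thin for color $i$; only at that point is \Cref{lem:vw-large-to-homogeneous} invoked, with its hypothesis actually in hand. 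If you want to keep your dichotomy, observe that your hard-case hypothesis (no weakly thin sub-sequence) rules out the paper's second alternative, so the finite-$F$ covering hypothesis must hold and yields the half-match directly; some covering argument of that type is needed --- collecting saturators and thinning does not substitute for it.
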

\begin{proof}
Suppose first that for every finite block sequence~$F \subseteq [X]_{A \varp}$, there is a located variable word~$q \in [X - F]_{A \varp}$ and some~$a \in A$ such that for every~$p \in [F]_{A\varp}$, there is some~$b \in A$ such that $f(p[b]\cup q[a]) \neq i$. Then build an infinite block sequence $\{ p_0 < p_1 < \dots \} \subseteq [X]_{A \varp}$ such that for every~$n$, there is some~$a_n \in A$ such that for every~$p \in [p_0, \dots, p_{n-1}]_{A \varp}$, there is some~$b \in A$ such that $f(p[b \cup ]p_n[a_n]) \neq i$. Let~$Y = \{q_0 < q_1 < \dots \}$ be the block sequence defined by $q_n = p_{2n} \cup p_{2n+1}[a_{2n+1}]$. Note that $Y \subseteq [X]_{A \varp}$.

We claim that $Y$ is weakly $f$-thin for color~$i$. Let $p \in [Y]_{A \varp}$.
Then $p = \bigcup_{n \in \dom q} q_n[q(n)]$ for some located variable word~$q \in \FIN_{A \varp}$.
Let~$m = \max \dom q$.
We have $q_m[q(m)] = p_{2m}[q(m)] \cup p_{2m+1}[a_{2m+1}]$.
Let~$r = \bigcup_{n \in \dom q \setminus \{m\}} q_n[q(n)]$. Then $p = r \cup  p_{2m}[q(m)] \cup p_{2m+1}[a_{2m+1}]$, with $r \in [p_0, \dots, p_{2m-1}]_{A \varp}$, so by definition of $\{ p_0 < p_1 < \dots \}$, there is some~$b \in A$ such that $f(r[b] \cup p_{2m}[q(m)] \cup p_{2m+1}[a_{2m+1}]) \neq i$. In other words, there is some~$b \in A$ such that $f(p[b]) \neq i$. Hence $Y$ is weakly $f$-thin for color~$i$. By \Cref{lem:vw-large-to-homogeneous}, there is an infinite block sequence~$Z \subseteq [Y]_{A \varp}$ which is $f$-thin for color~$i$.
In particular, $\min Z$ half-matches $Z - \{\min Z\}$ for color~$i$.

Suppose now that there is a finite block sequence~$F \subseteq [X]_{A \varp}$ such that for every located variable word~$q \in [X - F]_{A \varp}$ and every~$a \in A$, there is some~$p \in [F]_{A\varp}$ such that for every~$b \in A$, $f(p[b]\cup q[a]) = i$.
Then by assumption, $F$ half-matches $X - F$ for color~$i$.
\end{proof}

\begin{lemma}[$\RCA_0+\ISig_2$]\label[lemma]{lem:carlson-half}
Let $X \subseteq \FIN_{A \varp}$ be an infinite block sequence and let $f : [X]_A \to C$ be a coloring. There is a finite set~$F \subseteq [X]_{A \varp}$ and an infinite block sequence $Y \subseteq [X - F]_{A \varp}$ such that $F$ half-matches~$Y$.
\end{lemma}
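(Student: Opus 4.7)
The plan is to iterate \Cref{lem:carlson-half-case} once per color. Fix an enumeration $C = \{i_0, \dots, i_{k-1}\}$ where $k = |C|$. By induction on $j \leq k$, I construct a finite set $F_j \subseteq [X]_{A\varp}$ together with an infinite block sequence $X_j \subseteq [X - F_j]_{A\varp}$ such that $F_j$ half-matches $X_j$ for each of the colors $i_0, \dots, i_{j-1}$.

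The base case is $F_0 = \emptyset$ and $X_0 = X$, with the half-matching condition vacuously satisfied. For the inductive step, given $F_j$ and $X_j$, apply \Cref{lem:carlson-half-case} to the block sequence $X_j$, the restricted coloring $f \uh [X_j]_A$, and the color $i_j$. This yields a finite $F' \subseteq [X_j]_{A\varp}$ and an infinite block sequence $X_{j+1} \subseteq [X_j - F']_{A\varp}$ such that $F'$ half-matches $X_{j+1}$ for color $i_j$. Set $F_{j+1} = F_j \cup F'$.

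Two verifications complete the step. First, $X_{j+1} \subseteq [X - F_{j+1}]_{A\varp}$: the elements of $F'$ lie in $[X_j]_{A\varp} \subseteq [X - F_j]_{A\varp}$, while $X_{j+1}$ avoids the supports of both $F_j$ (by the inductive hypothesis) and $F'$ (by the conclusion of \Cref{lem:carlson-half-case}). Second, $F_{j+1}$ half-matches $X_{j+1}$ for every color $i_0, \dots, i_j$: for colors $i_0, \dots, i_{j-1}$, this follows from the inductive hypothesis together with the inclusion $[X_{j+1}]_A \subseteq [X_j]_A$ and the observation that the half-matching condition is universal over $q \in [Y]_A$, hence is preserved when passing to a sub-block-sequence $Y$ and when enlarging the matching set $F$; for color $i_j$, it follows from $F' \subseteq F_{j+1}$. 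Taking $F = F_k$ and $Y = X_k$ concludes the argument.

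The use of $\ISig_2$ is tailored to this induction over $j \leq k$: the inductive predicate asserts the existence of an infinite block sequence together with a finite witness satisfying the arithmetic half-matching condition, which is a $\Sigma^0_2$ property of $j$. The only delicate point is the bookkeeping checking that half-matching is preserved under passage to sub-block-sequences and to supersets of the matching set, both of which are immediate from the universal quantification in the definition; no further combinatorial obstacle is expected.
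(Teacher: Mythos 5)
Your proposal is correct and follows essentially the same route as the paper: iterate \Cref{lem:carlson-half-case} once per color, accumulate the finite matching sets, and use the fact that half-matching for a fixed color is preserved when passing to a sub-block-sequence and when enlarging the matching set (the paper phrases this as a single check at the end on $Y = Y_{|C|}$ with $F = \bigcup_s F_s$, whereas you maintain the cumulative invariant during the induction, which is only a cosmetic difference). The only loose point is your complexity tag for the inductive predicate, since it quantifies over an infinite block sequence and is therefore not literally $\Sigma^0_2$, but this matches the level of detail the paper itself gives for its use of $\ISig_2$.
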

\begin{proof}
Let~$C = \{i_0, i_1, \dots, i_{|C|-1}\}$. We build a finite sequence of pairs
$$(F_0, Y_0), \dots, (F_{|C|}, Y_{|C|})$$  inductively as follows:
Initially, $F_0 = \emptyset$ and $Y_0 = X$.
Assuming $F_s$ and $Y_s$ have been defined, by \Cref{lem:carlson-half-case}
there is a finite set $F_{s+1} \subseteq [X_s]_{A \varp}$ and an infinite block sequence $Y_{s+1} \subseteq [X_s - F_{s+1}]_{A \varp}$ such that
$F_{s+1}$ half-matches $Y_{s+1}$ for color~$i_s$.
Let~$F = \bigcup_{s \leq |C|} F_s$ and $Y = Y_{|C|}$.

We claim that $F$ half-matches $Y$.
Let~$q \in [Y]_A$. Let~$s$ be such that $f(q) = i_s$. Then since $F_{s+1}$ half-matches $Y_{s+1}$ for color~$i_s$ and $q \in [Y_{s+1}]_A$, there is some $p \in F_{s+1} \subseteq F$ such that for every~$a \in A$, $f(p[a] \cup q) = i_s = f(q)$.
\end{proof}

\begin{lemma}[$\ACA_0$]\label[lemma]{lem:carlson-full-case}
Let $X \subseteq \FIN_{A \varp}$ be an infinite block sequence and let $f : [X]_A \to C$ be a coloring. One of the following holds:
\begin{enumerate}
	\item[(1)] There is an infinite block sequence $Y \subseteq [X]_{A \varp}$ which is $f$-thin for some color~$i \in C$.
	\item[(2)] There is a finite set~$F \subseteq [X]_{A \varp}$ and an infinite block sequence $Y \subseteq [X - F]_{A \varp}$ such that $F$ full-matches~$Y$.
\end{enumerate}
\end{lemma}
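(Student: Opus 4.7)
If (1) holds, we are done, so assume (1) fails: no color $i \in C$ admits an infinite $f$-thin block sequence in $[X]_{A\varp}$. By the contrapositive of \Cref{lem:vw-large-to-homogeneous} applied to every infinite sub-block-sequence of $X$, we obtain the following \emph{density property}: for every infinite block sequence $Z \subseteq [X]_{A\varp}$ and every $i \in C$, there exists $p \in [Z]_{A\varp}$ with $f(p[a]) = i$ for all $a \in A$. In $\ACA_0$, this density can be iterated to produce an infinite block sequence of constant-profile-$i$ variable words inside any infinite $Z \subseteq [X]_{A\varp}$.

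We first invoke \Cref{lem:carlson-half} to obtain a finite $F_0 \subseteq [X]_{A\varp}$ and an infinite $Y_0 \subseteq [X - F_0]_{A\varp}$ such that $F_0$ half-matches $Y_0$: for every $q \in [Y_0]_A$ of color $i$, some $p \in F_0$ satisfies $f(p[a] \cup q) = i$ for all $a \in A$. The obstruction to promoting this to a full-match is that the witness $p$ need not have constant color profile $f(q)$.

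To close this gap, enumerate $C = \{i_0, \ldots, i_{|C|-1}\}$ and build by finite recursion on $s \leq |C|$ pairs $(F_s, Y_s)$ preserving the invariant that $F_s$ half-matches $Y_s$ and, moreover, for every $q \in [Y_s]_A$ of color $i_{s'}$ with $s' < s$, some $p \in F_s$ with constant profile $i_{s'}$ witnesses the match. The inductive step produces a finite $G_s \subseteq [Y_s]_{A\varp}$ of constant-profile-$i_s$ variable words together with an infinite $Y_{s+1} \subseteq [Y_s - G_s]_{A\varp}$ such that $G_s$ half-matches $Y_{s+1}$ for color $i_s$; we then set $F_{s+1} := F_s \cup G_s$. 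Enlarging the witness set preserves half-matching, so the invariants propagate. At the final stage, $F := F_{|C|}$ full-matches $Y := Y_{|C|}$, since for $q \in [Y]_A$ with $f(q) = i_s$ the corresponding witness $p \in G_s$ satisfies both $f(p[a]) = i_s$ (from its constant profile) and $f(p[a] \cup q) = i_s$ (from the color-$i_s$ half-match), which is the full-match condition.

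The main obstacle is the inductive step, namely the construction of a constant-profile-$i_s$ half-match. The difficulty is that the set of constant-profile-$i_s$ variable words inside $[Y_s]_{A\varp}$ is \emph{not} closed under the union operation, so one cannot simply restrict the ambient space and re-apply \Cref{lem:carlson-half-case}. Instead, we plan to adapt that argument by interleaving every extraction step with an invocation of the density property, guaranteeing that the variable word chosen at each stage has constant profile $i_s$ and that the residual block sequence still carries the density property. The resulting construction is arithmetical and formalisable in $\ACA_0$, matching the ambient complexity stated in the hypothesis.
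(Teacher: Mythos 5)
Your reduction is sound as far as it goes: the contrapositive of \Cref{lem:vw-large-to-homogeneous} does give the density property, and if you could really carry out the inductive step---a finite set $G_s$ of constant-profile-$i_s$ words in $[Y_s]_{A\varp}$ half-matching an infinite $Y_{s+1}$ for color $i_s$---then the final assembly into a full match would be correct. But that inductive step is not a routine adaptation of \Cref{lem:carlson-half-case}, and it is exactly where the whole difficulty of the lemma sits; your proposal defers it to a plan (``interleave density with the extraction steps'') that does not work as described. The diagonalization in \Cref{lem:carlson-half-case} succeeds because its dichotomy ranges over \emph{arbitrary} finite block sequences $F$: at stage $n$ one takes $F=\{p_0,\dots,p_{n-1}\}$, so the badness hypothesis controls $f(p[b]\cup p_n[a_n])$ for every combination $p\in[p_0,\dots,p_{n-1}]_{A\varp}$ of \emph{all} previously chosen blocks, and this is what verifies weak thinness of the assembled sequence $q_n=p_{2n}\cup p_{2n+1}[a_{2n+1}]$. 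Once you restrict $F$ to constant-profile-$i_s$ words, the non-constant-profile ``bad'' blocks $q[a]$ produced along the way can no longer be fed back into $F$, so the hypothesis gives no information about unions involving two or more bad blocks, and the weak-thinness (or any contradiction with the failure of (1)) of the sequence you would build cannot be established. Density does not repair this: it produces, for each $q$, constant-profile words occurring \emph{after} $q$, whereas a half/full match needs a finite witness set fixed \emph{in advance} that works uniformly for all later $q$---a uniformity that requires some compactness or pigeonhole mechanism your sketch does not supply. Indeed, your inductive step is essentially the single-color case of the lemma itself, so the argument as written is circular in difficulty.

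The paper avoids this trap by never asking for constant-profile witnesses in one shot. It iterates \Cref{lem:carlson-half} infinitely often against refined colorings $f_{s+1}(q)=\langle p,f_s(q)\rangle$ that record the half-match witnesses, so that along a chain $p_1\in F_1,\dots,p_s\in F_s$ every combination $p\in[p_1,\dots,p_s]_{A\varp}$ satisfies $f(p[a]\cup q)=f(q)$ for all $a$. Then either some stage already provides, for every $q$, a combination whose instantiations all have color $f(q)$---yielding case (2) with $F$ the set of all such combinations---or the failures produce arbitrarily long finite block sequences that are weakly $f$-thin for a fixed color, and weak K\"onig's lemma (available in $\ACA_0$) followed by \Cref{lem:vw-large-to-homogeneous} yields case (1). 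It is this iteration-plus-compactness step that substitutes for the constant-profile witnesses you are trying to construct directly; without something playing that role, your proof has a genuine gap.
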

\begin{proof}
Construct an infinite sequence of finite sets of located variable words $F_1, F_2, \dots$,
an infinite sequence of infinite block sequences $Y_0, Y_1, \dots$
and an infinite sequence of colorings $f_0, f_1, \dots$ as follows.

Initially, $Y_0 = X$ and $f_0 = f$.
Suppose $Y_s$ and $f_s$ have been defined.
By \Cref{lem:carlson-half}, there is a finite set~$F_{s+1} \subseteq [Y_s]_{A \varp}$ and an infinite block sequence $Y_{s+1} \subseteq [Y_s - F_{s+1}]_{A \varp}$ such that $F_{s+1}$ half-matches $Y_{s+1}$ for the coloring~$f_s$. Let~$f_{s+1}$ be defined on $[X_{s+1}]_A$ by letting $f_{s+1}(q) = \langle p, f_s(q) \rangle$, where $p \in F_{s+1}$ is such that for every~$a \in A$, $f_s(p[a] \cup q) = f_s(q)$.

Suppose first that there is some~$s$ such that for every~$q \in [Y_{s+1}]_A$, there is some~$p_1 \in F_1, \dots, p_s \in F_s$ and~$p \in [p_1, \dots, p_s]_{A \varp}$ such that for every~$a \in A$, $f(p[a]) = f(p[a] \cup q) = f(q)$. Then, letting $$F = \bigcup_{p_1 \in F_1, \dots, p_s \in F_s} [p_1, \dots, p_1]_{A \varp}$$
the set $F$ full-matches $Y_{s+1}$ and we are done.

Suppose now that for every~$s \in \NN$, there is some~$q_s \in [Y_{s+1}]_A$ such that for every~$p_1 \in F_1, \dots, p_s \in F_s$ and $p \in [p_1, \dots, p_s]_{A \varp}$, there is some~$a \in A$ such that if $f(p[a]\cup q_s) = f(q_s)$, then $f(p[a]) \neq f(q_s)$.
By the pigeonhole principle, there is a color~$i \in C$ and an infinite sequence of integers $s_0 < s_1 < \dots$ such that for every~$r \in \NN$, $f(q_{s_r}) = i$. By choice of the colorings~$f_0, f_1, \dots$ and the definitions of half-matches, for each such~$r$, we can find a sequence $p_1 \in F_1, \dots, p_{s_r} \in F_{s_r}$ such that for every~$p \in [p_1, \dots, p_{s_r}]_{A \varp}$ and every~$a \in A$, $f(p[a] \cup q_{s_r}) = f(q_{s_r}) = i$.
Then, by our supposition, for every~$p \in [p_1, \dots, p_{s_r}]_{A \varp}$, there is some~$a \in A$ such that $f(p[a]) \neq i$.

By weak K\"onig's lemma (which holds in~$\ACA_0$), there is an infinite block sequence~$Y \subseteq [X]_{A \varp}$ which is weakly $f$-thin for color~$i$.
By \Cref{lem:vw-large-to-homogeneous}, there is an infinite block sequence~$Z \subseteq [Y]_{A \varp}$ which is $f$-thin for color~$i$.
\end{proof}

\begin{lemma}[$\ACA_0$]\label[lemma]{lem:carlson-full}
Let $X \subseteq \FIN_{A \varp}$ be an infinite block sequence and let $f : [X]_A \to C$ be a coloring. There is a finite set~$F \subseteq [X]_{A \varp}$ and an infinite block sequence $Y \subseteq [X - F]_{A \varp}$ such that $F$ full-matches~$Y$.
\end{lemma}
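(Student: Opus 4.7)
The plan is to reduce \Cref{lem:carlson-full} to \Cref{lem:carlson-full-case} by iterating the dichotomy given there, performing an induction on the number of colors $|C|$. This is the natural move: \Cref{lem:carlson-full-case} gives us either the desired full-match conclusion, or an infinite block sequence that avoids one of the colors, and in the latter case we may simply reapply the dichotomy to the resulting restriction, which involves strictly fewer colors.

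More concretely, I would proceed by induction on $|C|$. The base case $|C|=1$ is trivial: any single $p \in X$, taken as $F = \{p\}$, full-matches $Y = X - \{p\}$, because the coloring $f$ is constant. For the inductive step, given $X$ and $f : [X]_A \to C$, apply \Cref{lem:carlson-full-case}. If case (2) holds, we are done. If case (1) holds, then there is an infinite block sequence $Y \subseteq [X]_{A \varp}$ and a color $i \in C$ such that $Y$ is $f$-thin for color~$i$. View $f \uh [Y]_A$ as a coloring into $C \setminus \{i\}$, which has strictly fewer colors; by the induction hypothesis applied to $Y$ and this restricted coloring, there is a finite set $F \subseteq [Y]_{A \varp}$ and an infinite block sequence $Z \subseteq [Y - F]_{A \varp}$ such that $F$ full-matches $Z$ with respect to $f \uh [Y]_A$, hence also with respect to $f$ itself. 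Since $[Y]_{A \varp} \subseteq [X]_{A \varp}$ and $[Y - F]_{A \varp} \subseteq [X - F]_{A \varp}$, this gives the full-match conclusion for $X$ and $f$.

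The main thing to check carefully is that the inclusions really transfer through the two layers of extraction: if $F$ is built from located variable words in $Y$, and $Y$ itself is built from variable words in $X$, then every element of $F$ lies in $[X]_{A \varp}$, and every element of $Z \subseteq [Y - F]_{A \varp}$ lies in $[X - F]_{A \varp}$, because all constituent blocks come from positions in $X$ strictly after those used in $F$. This is the only nontrivial bookkeeping step; the induction itself is on a standard number $|C|$ and is performed only $|C|$ many times, so the whole argument is straightforwardly formalizable in $\ACA_0$, matching the ambient hypothesis of \Cref{lem:carlson-full-case} on which we rely.
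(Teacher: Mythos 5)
Your proof is correct and follows essentially the same route as the paper: induction on $|C|$, applying \Cref{lem:carlson-full-case} at each step, with case (1) reducing the number of colors via the $f$-thin sequence and case (2) giving the conclusion directly. The only cosmetic difference is that you handle the base case $|C|=1$ by a direct trivial full-match rather than by noting that case (1) of \Cref{lem:carlson-full-case} is impossible for a single color, and you spell out the bookkeeping of the inclusions $[Y]_{A\varp} \subseteq [X]_{A\varp}$ and $[Y-F]_{A\varp} \subseteq [X-F]_{A\varp}$, which the paper leaves implicit.
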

\begin{proof}
By induction on~$|C|$. Applying \Cref{lem:carlson-full-case} with $|C| = 1$, the second case must hold and we are done. Suppose now that $|C| > 0$ and that the property holds for~$|C|-1$. Applying \Cref{lem:carlson-full-case}, either the second case holds, in which case we are done, or the first case holds, and we apply the induction hypothesis.
\end{proof}

\begin{theorem}[$\ACA^+_0$]
Let $X \subseteq \FIN_{A \varp}$ be an infinite block sequence and let $(f_n)_{n \in \NN}$ be a sequence of colorings $[X]_A \to C$,
There is an infinite block sequence $Y \subseteq [X]_{A \varp}$ such that for every~$n$, there is some finite set~$F \subseteq Y$ such that $[Y - F]_A$ is $f_n$-homogeneous.
\end{theorem}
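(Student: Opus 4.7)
The plan is to iterate \Cref{lem:carlson-full} along the sequence $(f_n)_{n \in \NN}$ of colorings and then to diagonalize, using $\ACA^+_0$ to justify the $\omega$-length recursion.

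First, I would construct by recursion on $n$ a decreasing sequence of infinite block sequences $X = X_0 \supseteq X_1 \supseteq X_2 \supseteq \cdots$ together with finite sets $F_0, F_1, \ldots$ such that $F_n \subseteq [X_n]_{A\varp}$, $X_{n+1} \subseteq [X_n - F_n]_{A\varp}$, and $F_n$ full-matches $X_{n+1}$ for $f_n \uh [X_n]_A$. Each individual step is a direct application of \Cref{lem:carlson-full}, and the existence of the full $\omega$-sequence $(X_n, F_n)_{n \in \NN}$ amounts to an arithmetic recursion along $\omega$, which is precisely the principle formalized by $\ACA^+_0$.

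Next, I would refine each $X_{n+1}$ to an infinite sub-block-sequence $Y_n$ on which $[Y_n]_A$ is genuinely $f_n$-homogeneous. The full-match bounds the $f_n$-colors of $[X_{n+1}]_A$ to the finite set $\{f_n(p[a]) : p \in F_n, a \in A\}$ and assigns to each $q \in [X_{n+1}]_A$ a witness $p_q \in F_n$; by applying \Cref{lem:carlson-full} to the witness coloring $q \mapsto p_q$ and then \Cref{lem:vw-large-to-homogeneous} to upgrade weak thinness to thinness, I expect to stabilize the witness on a sub-block-sequence, forcing the $f_n$-color to be constant. I would interleave this refinement with the main recursion so that the sequences remain nested, $X = Y_0 \supseteq Y_1 \supseteq \cdots$, with each $[Y_n]_A$ being $f_n$-homogeneous. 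Finally, I would diagonalize: pick $y_n \in [Y_n]_{A\varp}$ with $y_n > y_{n-1}$ and set $Y = \{y_n : n \in \NN\}$. Then for every $n$ and every $m \geq n$ we have $y_m \in [Y_m]_{A\varp} \subseteq [Y_n]_{A\varp}$, so $[\{y_m : m \geq n\}]_A \subseteq [Y_n]_A$ is $f_n$-homogeneous, and $F = \{y_0, \ldots, y_{n-1}\}$ witnesses the conclusion for $f_n$.

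The hard part will be the refinement step: extracting genuine $f_n$-homogeneity from the full-match supplied by \Cref{lem:carlson-full}. Full-match controls the $f_n$-color of $q$ only through its witness $p_q \in F_n$, which may vary with $q$; making this witness (and hence the color) uniform on an infinite sub-block-sequence is the subtle point of the Towsner-style argument, and the cumulative iteration required for this extraction, combined with the outer iteration over $(f_n)_{n \in \NN}$, is precisely what calls on the full strength of $\ACA^+_0$.
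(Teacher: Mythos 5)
Your overall architecture --- iterate the full-match lemma along $(f_n)_{n\in\NN}$, then diagonalize --- is in the right spirit, but the refinement step you flag as ``the hard part'' is a genuine gap, and it is not fixable by the tools you name. A full-match of $F_n$ only controls the $f_n$-colour of each $q \in [X_{n+1}]_A$ through a witness $p_q \in F_n$ that varies with $q$; applying \Cref{lem:carlson-full} again to the witness colouring $q \mapsto p_q$ produces yet another full-match (a new finite set of witnesses for the witness colouring), not a sub-block-sequence on which the witness is constant, and \Cref{lem:vw-large-to-homogeneous} only upgrades \emph{weak thinness} for a colour to thinness --- it never produces homogeneity. In fact, extracting a genuinely $f_n$-homogeneous infinite block sequence from a single colouring \emph{is} Carlson's theorem for located words, which is exactly what cannot be reached by finitely many applications of the arithmetic lemmas; in the Towsner-style argument homogeneity only materializes after an $\omega$-length iteration followed by a compactness step. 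Moreover, even if you grant yourself a solution of each single instance (costing roughly an $\omega$-jump each), your outer recursion along $(f_n)$ is then no longer an arithmetic recursion, and $\ACA^+_0$ does not justify iterating an $\omega$-jump-level operator $\omega$ many times; so the phrase ``the cumulative iteration \dots is precisely what calls on the full strength of $\ACA^+_0$'' hides an unjustified appeal to roughly $\omega^2$ jumps.

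The paper's proof avoids both problems by never producing homogeneity at an intermediate stage. Each step applies \Cref{lem:carlson-full} (which is $\ACA_0$-provable) to a \emph{composite} colouring: one sets $g_0 = f_0$ and $g_{s+1}(q) = \langle p, g_s(q), f_{s+1}(q)\rangle$, where $p \in F_{s+1}$ is the full-match witness for $q$. Encoding the witness and the next input colouring into the colour is the key idea you are missing: it makes the entire construction a single $\omega$-length arithmetic recursion (this is where $\ACA^+_0$ is used), and it guarantees that for every $s$ one can choose a chain of witnesses $p_1 \in F_1, \dots, p_s \in F_s$ such that $[p_t,\dots,p_s]_A$ is $f_t$-homogeneous for every $t \leq s$. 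The desired block sequence $Y$ is then extracted at the very end by weak K\"onig's lemma applied to the finitely branching tree of such witness chains, which simultaneously yields, for every $n$, a finite set $F \subseteq Y$ with $[Y-F]_A$ $f_n$-homogeneous. Your final diagonalization would be fine if the intermediate sequences $Y_n$ existed, but the construction of those $Y_n$ is precisely what the composite-colouring-plus-K\"onig mechanism replaces.
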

\begin{proof}
As in \Cref{lem:carlson-full-case}, construct an infinite sequence of finite sets of located variable words $F_1, F_2, \dots$,
an infinite sequence of infinite block sequences $Y_0, Y_1, \dots$
and an infinite sequence of colorings $g_0, g_1, \dots$ as follows.

Initially, $Y_0 = X$ and $g_0 = f_0$.
Suppose $Y_s$ and $g_s$ have been defined.
By \Cref{lem:carlson-full}, there is a finite set~$F_{s+1} \subseteq [Y_s]_{A \varp}$ and an infinite block sequence $Y_{s+1} \subseteq [Y_s - F_{s+1}]_{A \varp}$ such that $F_{s+1}$ full-matches $Y°{s+1}$ for the coloring~$g_s$. Let~$g_{s+1}$ be defined on $[X_{s+1}]_A$ by letting $g_{s+1}(q) = \langle p, g_s(q), f_{s+1}(q) \rangle$, where $p \in F_{s+1}$ is such that for every~$a \in A$, $g_s(p[a]) = g_s(p[a] \cup q) = g_s(q)$.

For every~$s$, there is a sequence $p_1 \in F_1, \dots, p_s \in F_s$ such that for each~$t \leq s$, $[p_t, \dots, p_s]_A$ is $f_t$-homogeneous. By weak K\"onig's lemma, there is an infinite block sequence $Y \subseteq [X]_{A \varp}$ such that for every~$n$, there is some finite set~$F \subseteq Y$ such that $[Y - F]_A$ is $f_n$-homogeneous.
\end{proof}

\section{Topological dynamics in~$C^{\FIN_A}$}\label[section]{sect:topo-dynamics}

Topological dynamics studies recurrence phenomena in the context of compact topological spaces.
Furstenberg and Weiss~\cite{furstenberg1978topological} have shown that it was a very powerful tool to study combinatorial theorems. They gave in particular a proof of Hindman's theorem based on the Auslander-Ellis theorem, a theorem of topological dynamics informally stating that for every finite coloring~$f : \NN \to C$, there exists another coloring~$g : \NN \to C$ similar to~$f$, but which enjoys a strong recurrence property. The proof of the Auslander-Ellis theorem involved the notion of Ellis' enveloping semigroup, which is a third-order object related to the notion of ultrafilter.

Blass, Hirst and Simpson~\cite{blass1987logical} studied the work of Furstenberg and Weiss in the context of reverse mathematics, and got rid of the use of Ellis' enveloping semigroup by using iterated Hindman's theorem in the proof of the Auslander-Ellis theorem. Kreuzer~\cite{kreuzer2013minimal} proved the equivalence between iterated Hindman's theorem, the Auslander-Ellis theorem and the existence of an idempotent ultrafilter in higher-order reverse mathematics.

The original work of Furstenberg and Weiss~\cite{furstenberg1978topological} was on finite colorings of the semigroup $(\NN, +)$. Furstenberg and Katznelson~\cite{furstenberg1989idempotents} extended their work to the semigroup $(\W(A), \cdot)$ of words over a finite alphabet~$A$, and proved Carlson's theorem for words and variable words using topological dynamics, using Ellis' enveloping semigroup.

In this section, we build on the work of Furstenberg and Katznelson and develop some basics of topological dynamics for located words. The tools will be used in \Cref{sect:aet} to state and prove multiple versions of the Auslander-Ellis theorem from the iterated Finite Union theorem.

Consider $\FIN_A$ as a partial semigroup with the $\cup$ operation, defined only whenever~$p < q$.
Fix a finite set $C$ of colors.
$C^{\FIN_A}$ is the space of all mappings from~$\FIN_A$ to~$C$.
Given~$m \in \NN$, $n \in \NN \cup \{\infty\}$, we write~$\FIN_A(m, n)$ for the set of all located words~$p \in \FIN_A$ such that $m \leq \dom(p) < n$.
Given~$\ell \in \NN$ and $p \in \FIN_A(\ell, \infty)$, let~$S^\ell_p(f) : \FIN_A(0, \ell) \to C$
be defined by $S^\ell_p(f)(q) = f(q \cup p)$. Whenever~$p = \emptyset$, we simply write~$S^\ell(f)$ for~$S^\ell_\emptyset(f)$.
We say that a finite coloring~$h : \FIN_A(0, \ell) \to C$ is a \emph{factor} of $f : \FIN_A \to C$ if
there is some~$p \in \FIN_A$ such that $S^\ell_p(f) = h$.

%We will be using the following theorem as a blackbox.
%
%\begin{theorem}[Hales-Jewett, $\RCA_0$]\label[theorem]{thm:hales-jewett}
%Let~$A$ be a finite alphabet and $f : \FIN_A \to C$ be a finite coloring.
%Then there is a located variable word~$p \in \FIN_{A \varp}$ such that $[p]_A$ is $f$-homogeneous.
%\end{theorem}
%
%The previous theorem does not require $\BSig_2$ since it admits a finite combinatorial version which requires less induction.

\subsection{Recurrence}

Informally, a coloring is recurrent if any initial segment of it occurs arbitrarily far. The located words being non-linearly ordered, there exist multiple notions of recurrence for colorings of located words.

\begin{definition}
A coloring~$f : \FIN_A \to C$ is
\begin{enumerate}
	\item \emph{weakly recurrent} if for every~$\ell \in \NN$,
there is some located word~$p \in \FIN_A(\ell, \infty)$ such that
$S^\ell_p(f) = S^\ell(f)$.
	\item \emph{recurrent} if for every~$\ell \in \NN$,
there is some located variable word~$p \in \FIN_{A \varp}(\ell, \infty)$ such that for each~$a \in A$,
$S^\ell_{p[a]}(f) = S^\ell(f)$.
	\item \emph{uniformly recurrent} if for every~$\ell \in \NN$,
there is some~$m > \ell$ such that for every~$p \in \FIN_A(m, \infty)$, there is some~$q \in \FIN_A(\ell, m)$ for which $S^\ell(f) = S^\ell_{q \cup p}(f)$.
\end{enumerate}
\end{definition}

The terminology suggests that uniform recurrence is a stronger notion than recurrence, although it is not obvious from the definitions. The following lemma shows that it is the case, thanks to the Hales-Jewett theorem.

\begin{lemma}[$\RCA_0$]
If~$f$ is uniformly recurrent, then it is recurrent.	
\end{lemma}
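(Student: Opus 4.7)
The plan is to fix $\ell \in \NN$ and combine the uniform recurrence of $f$ with the Hales-Jewett theorem (\Cref{thm:hales-jewett}) to produce a witness $p \in \FIN_{A \varp}(\ell, \infty)$ for recurrence at level $\ell$. First, by uniform recurrence, choose $m > \ell$ such that for every $r \in \FIN_A(m, \infty)$ there exists $q \in \FIN_A(\ell, m)$ with $S^\ell_{q \cup r}(f) = S^\ell(f)$.

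Next, consider the finite set $D = \bigl(C^{\FIN_A(0,\ell)}\bigr)^{\FIN_A(\ell, m)}$ and define a coloring $h : \FIN_A(m, \infty) \to D$ by $h(r)(q) = S^\ell_{q \cup r}(f)$. After translating the domain by $m$ (through the obvious bijection $\FIN_A \to \FIN_A(m, \infty)$ that shifts each index up by $m$ and sends variable words to variable words), \Cref{thm:hales-jewett} applied to $h$ yields a located variable word $p_0 \in \FIN_{A \varp}(m, \infty)$ such that $h(p_0[a])$ does not depend on $a \in A$.

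Now fix any letter $a_0 \in A$. Applying the defining property of $m$ to $p_0[a_0] \in \FIN_A(m, \infty)$ produces some $q^* \in \FIN_A(\ell, m)$ with $h(p_0[a_0])(q^*) = S^\ell_{q^* \cup p_0[a_0]}(f) = S^\ell(f)$. By the Hales-Jewett homogeneity, $h(p_0[a])(q^*) = S^\ell(f)$ for every $a \in A$. Setting $p = q^* \cup p_0 \in \FIN_{A \varp}(\ell, \infty)$, we have $p[a] = q^* \cup p_0[a]$, and therefore $S^\ell_{p[a]}(f) = S^\ell(f)$ for every $a \in A$, as required. The only mild technical point is the index translation needed before invoking Hales-Jewett; once that is dispatched, the rest is a direct application of the hypotheses and poses no genuine obstacle.
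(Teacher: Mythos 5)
Your proof is correct and takes essentially the same route as the paper: fix $\ell$, obtain $m$ from uniform recurrence, apply the Hales--Jewett theorem (after shifting by the canonical bijection) to a finite coloring of $\FIN_A(m,\infty)$ that records behaviour relative to $\FIN_A(\ell,m)$, and then prepend a fixed $q$ to the resulting located variable word. The only cosmetic difference is that the paper colors each tail word by a witness $q$ chosen in advance via uniform recurrence, whereas you color by the full trace $q \mapsto S^\ell_{q \cup r}(f)$ and invoke uniform recurrence once after homogenizing; both arguments are sound.
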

\begin{proof}
Fix~$\ell \in \NN$. Let~$m > \ell$ be such that for every~$p \in \FIN_A(m, \infty)$, there is some~$q \in \FIN_A(\ell, m)$
for which $S^\ell(f) = S^\ell_{q \cdot p}(f)$.

Let~$\pi_m : \FIN_A \to \FIN_A(m, \infty)$ be the canonical bijection.
Let~$g : \FIN_A \to \FIN_A(\ell, m)$ be defined for every~$p \in \FIN_A(m, \infty)$ by letting $g(p)$ be the least $q \in \FIN_A(\ell, m)$ (in any fixed order) such that $S^\ell_{q \cup \pi_m(p)}(f) = S^\ell(f)$.
By the Hales-Jewett theorem (\Cref{thm:hales-jewett}), there is a located variable word $v \in \FIN_{A \varp}$ such that $[v]_A$ is $g$-homogeneous, for some color~$q \in \FIN_A(\ell, m)$. Let~$w = q \cup \pi_m(v)$. In particular, for every~$a \in A$, $g(v[a]) = q$, so $S^\ell_{w[a]}(f) = S^\ell_{q \cup \pi_m(v[a])}(f) = S^\ell(f)$.
\end{proof}

The following lemma shows that recurrent colorings are simple instances of Carlson's theorem for located words, in that they admit solutions computable in the instances.

\begin{lemma}[$\RCA_0$]\label[lemma]{lem:recurrent-carlson}
For every recurrent coloring~$f : \FIN_A \to C$, there is an infinite block sequence $X \subseteq \FIN_{A \varp}$ such that $[X]_A$ is $f$-homogeneous for color~$f(\emptyset)$.
\end{lemma}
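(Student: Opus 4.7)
The plan is to construct the infinite block sequence $X = \{p_0 < p_1 < \dots\}$ by a direct induction, using the definition of recurrence at each step. The invariant I will maintain is that after step $n$, every element of $[\{p_0, \dots, p_{n-1}\}]_A$ has $f$-color equal to $f(\emptyset)$.

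For the base case, I apply recurrence at $\ell = 0$: this yields some $p_0 \in \FIN_{A\varp}$ such that $S^0_{p_0[a]}(f) = S^0(f)$ for every $a \in A$. Since $\FIN_A(0,0) = \{\emptyset\}$, this simply says $f(p_0[a]) = f(\emptyset)$ for all $a \in A$, establishing the invariant for $\{p_0\}$.

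For the inductive step, assume $p_0 < \dots < p_{n-1}$ have been constructed so that every element of $[\{p_0, \dots, p_{n-1}\}]_A$ gets color $f(\emptyset)$. Choose $\ell > \max \dom(p_{n-1})$, so that every element of $[\{p_0, \dots, p_{n-1}\}]_A$ lies in $\FIN_A(0, \ell)$. By recurrence, there is some $p_n \in \FIN_{A\varp}(\ell, \infty)$ with $S^\ell_{p_n[a]}(f) = S^\ell(f)$ for each $a \in A$; equivalently, $f(q \cup p_n[a]) = f(q)$ for every $q \in \FIN_A(0, \ell)$ and every $a \in A$. In particular, for every $q \in [\{p_0, \dots, p_{n-1}\}]_A$ we obtain $f(q \cup p_n[a]) = f(q) = f(\emptyset)$ by the induction hypothesis, and for $q = \emptyset$ we get $f(p_n[a]) = f(\emptyset)$. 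Since every element of $[\{p_0, \dots, p_n\}]_A$ either lies in $[\{p_0, \dots, p_{n-1}\}]_A$, or has the form $q \cup p_n[a]$ with $q \in [\{p_0, \dots, p_{n-1}\}]_A \cup \{\emptyset\}$ and $a \in A$, the invariant is preserved.

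Iterating this construction, the resulting $X = \{p_n : n \in \NN\}$ is an infinite block sequence in $\FIN_{A\varp}$ for which $[X]_A$ is $f$-homogeneous of color $f(\emptyset)$. There is no real obstacle here: the statement is essentially a reformulation of the recurrence property, and the only mild care needed is choosing $\ell$ large enough at each step so that the previously constructed located words lie in $\FIN_A(0,\ell)$, which allows the recurrence condition to absorb all previously chosen unions uniformly.
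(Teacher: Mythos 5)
Your proof is correct and follows essentially the same route as the paper: an inductive construction of $p_0 < p_1 < \dots$, applying recurrence at a level $\ell$ chosen beyond $\max\dom p_{n-1}$ so that $S^\ell_{p_n[a]}(f) = S^\ell(f)$ absorbs all previously built elements of $[\{p_0,\dots,p_{n-1}\}]_A \cup \{\emptyset\}$ at once. The only difference is cosmetic bookkeeping (your explicit decomposition of $[\{p_0,\dots,p_n\}]_A$), so there is nothing to add.
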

\begin{proof}
We build a sequence~$p_0 < p_1 < \dots$ of located variable words inductively.
Since~$f$ is recurrent, there is some located variable word~$p_0 \in \FIN_{A \varp}$ such that for each~$a \in A$, $S^0_{p_0[a]}(f) = S^0(f)$.
In particular, $f(p_0[a]) = f(\emptyset)$ for every~$a \in A$.

Assume~$p_0 < \dots < p_n$ are located variable words such that $[p_0, \dots, p_n]_A$ is $f$-homogeneous for color~$f(\emptyset)$. Let~$\ell = 1 \max \dom p_n$.
Since~$f$ is recurrent, there is some located variable word~$p_{n+1} \in \FIN_{A \varp}(\ell, \emptyset)$ such that for each~$a \in A$, $S^\ell_{p_{n+1}[a]}(f) = S^\ell(f)$.
Let~$p \in [p_0, \dots, p_n]_A \cup \{\emptyset\}$. Note that $\max \dom p < \ell$ and that $f(p) = f(\emptyset)$. Then for every~$a \in A$,
$$f(p \cup p_{n+1}[a]) = S_{p_{n+1}[a]}(f)(p) = g(p) = f(\emptyset)$$
Thus, $[p_0, \dots, p_n, p_{n+1}]_A$ is $f$-homogeneous for color~$f(\emptyset)$.
\end{proof}

%\begin{definition}
%A coloring~$X \in \Sc$ is \emph{recurrent} if for every~$\ell$,
%there are infinitely many~$\sigma \in \FIN_A$ 	such that $X \uh \FIN_A(\ell) = S_{\sigma}(X) \uh \FIN_A(\ell)$. A coloring~$X \in \Sc$ is \emph{uniformly recurrent} if for every~$\ell$,
%there is some~$m$ such that for every~$\sigma \in \FIN_A$, there is some~$\tau \in \FIN_A(m)$
%for which $X \uh \FIN_A(\ell) = S_{\sigma\tau}(X) \uh \FIN_A(\ell)$.
%\end{definition}

\subsection{Minimal subshifts}

In general, a dynamical system induces a notion of subshift, which is a class of colorings closed under application of the homeomorphism. In the case of colorings of located words, the situation is slightly more complex because of the partiality of the semigroup. We can however define some notion of subshift, and recover the standard properties saying that any minimal subshift contains only uniformly recurrent colorings.

\begin{definition}
A non-empty closed class~$\C \subseteq C^{\FIN_A}$ is a \emph{subshift} if for every~$f \in \C$,
every~$\ell \in \NN$ and $p \in \FIN_A(\ell, \infty)$,
there is some~$g \in \C$ such that $S^\ell(g) = S^\ell_p(f)$.
\end{definition}

A subshift $\C \subseteq C^{\FIN_A}$ is \emph{minimal} if there is no subshift $\D \subsetneq \C$.

\begin{lemma}[$\WKL_0$]\label[lemma]{lem:words-minimal-subshift-ur}
If~$\C \subseteq C^{\FIN_A}$ is a minimal subshift and~$f \in \C$, then $f$ is uniformly recurrent.
\end{lemma}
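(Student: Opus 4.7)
I plan to argue by contraposition, using $\WKL_0$-compactness to manufacture a proper sub-subshift of $\C$, which will contradict minimality.

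Suppose $f \in \C$ is not uniformly recurrent, as witnessed by some $\ell \in \NN$: for each $m > \ell$ there is $p_m \in \FIN_A(m, \infty)$ with $S^\ell_{q \cup p_m}(f) \neq h$ for every $q \in \FIN_A(\ell, m)$, where $h := S^\ell(f)$. By the subshift axiom of $\C$ applied to $f$, pick $g_m \in \C$ with $S^m(g_m) = S^m_{p_m}(f)$. Since $C^{\FIN_A}$ is compact under $\WKL_0$, a subsequence $g_{m_k}$ converges to some limit $g$, and the closedness of $\C$ gives $g \in \C$. For any $q \in \FIN_A(\ell, \infty)$, once $m_k > \max \dom(q)$ the identity $S^\ell_q(g_{m_k}) = S^\ell_{q \cup p_{m_k}}(f) \neq h$ holds, and pointwise convergence into the finite set $C^{\FIN_A(0, \ell)}$ then forces $S^\ell_q(g) \neq h$. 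The $q = \emptyset$ case, i.e., $S^\ell(g) \neq h$, is handled by additionally requiring $S^\ell_{p_m}(f) \neq h$ in the selection of $p_m$, available from a slight strengthening of the failure of uniform recurrence.

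I then consider the candidate
$$\D := \{g' \in \C : \forall q \in \FIN_A(\ell, \infty) \cup \{\emptyset\},\; S^\ell_q(g') \neq h\},$$
which is closed (an intersection of clopen conditions), non-empty ($g \in \D$) and proper ($f \notin \D$ since $S^\ell(f) = h$). To contradict minimality it remains to verify that $\D$ is itself a subshift. Given $g' \in \D$, $\ell' \in \NN$ and $p' \in \FIN_A(\ell', \infty)$, I produce $g'' \in \D$ with $S^{\ell'}(g'') = S^{\ell'}_{p'}(g')$ via $\WKL_0$ as an element of $\bigcap_{N > \ell'} K_N$, where
$$K_N := \{g'' \in \C : S^{\ell'}(g'') = S^{\ell'}_{p'}(g') \ \text{and}\ \forall q \in \FIN_A(\ell, N) \cup \{\emptyset\},\; S^\ell_q(g'') \neq h\}$$
is clopen and the family is decreasing. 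It therefore suffices to show each $K_N$ is non-empty: for $q$ with $\max \dom(q) < \ell'$, the identity $S^\ell_q(g'') = S^\ell_{q \cup p'}(g')$ combined with $g' \in \D$ yields the avoidance automatically from the initial-segment condition, while for $q$ with $\max \dom(q) \geq \ell'$ the avoidance is enforced by iteratively applying the subshift axiom of $\C$ to $g'$ with extended shifts that propagate the avoidance of $h$.

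The principal obstacle will be the non-emptiness of each $K_N$: the subshift axiom of $\C$ only controls initial segments of its elements, while membership in $\D$ imposes infinitely many avoidance constraints on the full shift spectrum of $g''$. Matching the prescribed initial segment while simultaneously propagating the inherited avoidance of $h$ over all shifts requires a delicate combination of iterated subshift witnesses and $\WKL_0$-compactness.
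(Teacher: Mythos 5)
Your skeleton is close to the paper's: both arguments work with the class $\D$ of members of $\C$ that avoid $h = S^\ell(f)$ at every shift (and initially), and both want to contradict minimality through $\D$. The paper, however, never constructs your limit point $g$: it argues that $\D$, being a proper subshift, must be empty by minimality, extracts with $\WKL_0$ a uniform bound $m$ such that every member of $\C$ contains $h$ as a factor below $m$, and then contradicts the failure of uniform recurrence by applying the subshift axiom of $\C$ to $f$ itself at the bad shift $p \in \FIN_A(m,\infty)$. Your first half is essentially the contrapositive of that last step and is sound in substance, except that extracting a convergent subsequence is Bolzano--Weierstrass, which is $\ACA_0$, not $\WKL_0$; in $\WKL_0$ you should instead note that the finite colorings $S^N_{p_N}(f)$ (with your strengthened choice of $p_N$) witness that the $\Pi^0_1$ subclass $\D$ of $\C$ is coded by an infinite tree, so $\D \neq \emptyset$.

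The genuine gap is the one you flag yourself: the non-emptiness of the classes $K_N$, i.e.\ the verification that $\D$ is a subshift. This is not a routine propagation, and the mechanism you sketch --- iterating the subshift axiom of $\C$ applied to $g'$, with no use of minimality --- cannot succeed, because without minimality the statement is false. Take $A$ a singleton (identify $\FIN_A$ with the nonempty finite subsets of $\NN$), $C = \{0,1\}$, $\ell = 1$, and $h$ the coloring sending $\{0\}$ to $1$. Let $z$ be constantly $0$, let $u(F)=1$ iff $F=\{0\}$, let $y(F)=1$ iff $F \in \{\{1\},\{0,11\}\}$, and let $g(F)=1$ iff $F=\{1,10\}$. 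One checks directly that $\C = \{z,u,y,g\}$ is a (finite, hence closed) subshift and that $g$ avoids $h$ at every shift and initially; yet the only member of $\C$ whose restriction to $\FIN_A(0,2)$ equals $S^2_{\{10\}}(g)$ is $y$, and $S^1_{\{11\}}(y) = h$, so with $g'=g$, $\ell'=2$, $p'=\{10\}$ your $K_N$ is empty for all $N \geq 12$, and $\D = \{z,g\}$ is not a subshift. So any correct completion must use the minimality of $\C$ (or some further structure) exactly at this point; this is also the step the paper's proof passes over with ``Note that $\D$ is a subshift'', so you have correctly located where the real content lies, but your proposal does not supply it, and the particular route you describe for the $K_N$ provably cannot.
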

\begin{proof}
Suppose~$f$ is not uniformly recurrent. Then there is some~$\ell \in \NN$ such that for every bound~$m  > \ell$, there is some located word~$p \in \FIN_A(m, \infty)$ such that for every~$q \in \FIN_A(\ell, m)$, $S^\ell_{q \cup p}(f) \neq S^\ell(f)$.
Let~$\D$ be the $\Pi^0_1$ class of all~$g \in \C$ which does not contain~$S^\ell(f)$ as a factor. Note that $\D$ is a subshift.
As~$f \not \in \D$, we have that $\D \subsetneq \C$. By minimality assumption, then $\D = \emptyset$.
By $\WKL$, there is a bound~$m > \ell$ such that for every~$g \in \C$, there is some~$q \in \FIN_A(\ell, m)$ such that $S^\ell_q(g) = S^\ell(f)$.

Fix some $p \in \FIN_A(m, \infty)$. By definition of a subshift, there is some~$g \in \C$ such that $S^m(g) = S^m_p(f)$. In particuler, for every~$q \in \FIN_A(\ell, m)$, $S^\ell_q(g) = S^\ell_{q \cup p}(f)$.
Since~$g \in \C$, there is some~$q \in \FIN_A(\ell, m)$ such that $S^\ell_q(g) = S^\ell(f)$. But $S^\ell_q(g) = S^\ell_{q \cup p}(f)$, so $S^\ell_{q \cup p}(f) = S^\ell(f)$. This contradicts our initial choice of~$\ell$.
\end{proof}

From a purely mathematical viewpoint, the existence of minimal subshifts follow from Zorn's lemma. Day~\cite{day2016strength} however proved that their existence follow from~$\ACA_0$ in the context of colorings of~$\NN$. The proof of the existence of minimal subshifts in our case follows exactly the same argument.

\begin{lemma}[$\ACA_0$, Day~\cite{day2016strength}]\label[lemma]{lem:words-subshift-minimal-subshift}
Every subshift contains a minimal subshift.
\end{lemma}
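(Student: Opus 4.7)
The plan is to adapt Day's proof of the existence of minimal subshifts in~$\ACA_0$ to the setting of located words. The strategy is to enumerate all finite colorings~$(\sigma_s)_{s \in \NN}$ and to construct, by arithmetic recursion, a decreasing sequence of subshifts $\C = \C_0 \supseteq \C_1 \supseteq \cdots$ whose intersection is a minimal subshift. A useful preliminary observation is the duality $\D = \{ g \in C^{\FIN_A} : F(g) \subseteq F(\D) \}$ valid for any subshift~$\D$, where $F(\D) = \{ S^\ell(f) : f \in \D, \ell \in \NN \}$: the nontrivial inclusion is a compactness argument in~$C^{\FIN_A}$ available in $\WKL_0 \subseteq \ACA_0$, and it shows that a subshift is determined by its factor set.

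At stage~$s$, using the jump provided by $\ACA_0$, I would decide whether every element of~$\C_s$ contains $\sigma_s$ as a factor. If so, I set $\C_{s+1} = \C_s$. If not, I pick $f_s \in \C_s$ with $\sigma_s \notin F(f_s)$ and define $\C_{s+1} = \{ g \in \C_s : F(g) \subseteq F(f_s) \}$, which is the natural analogue of the orbit closure of~$f_s$ inside~$\C_s$. Since the whole recursion is arithmetic in the parameter coding~$\C$, the sequence $(\C_s)_{s \in \NN}$ exists in~$\ACA_0$.

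Finally, I would set $\C_\infty = \bigcap_s \C_s$. It is nonempty by the finite intersection property of $\Pi^0_1$ classes, closed as an intersection of closed sets, and inherits the subshift property via a stage-by-stage compactness argument: given $f \in \C_\infty$ and parameters~$\ell, p$, the closed subsets $\{h \in \C_s : S^\ell(h) = S^\ell_p(f)\}$ form a decreasing chain of nonempty compacts in $C^{\FIN_A}$, and any element of their intersection is the desired~$h \in \C_\infty$. Minimality follows by contradiction: if a strict sub-subshift $\D \subsetneq \C_\infty$ existed, the factor sets would satisfy $F(\D) \subsetneq F(\C_\infty)$ by the duality above, so some $\sigma \in F(\C_\infty) \setminus F(\D)$ would equal~$\sigma_s$ for some~$s$. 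Then any~$f \in \D \subseteq \C_s$ witnesses that $\sigma_s$ is not a factor of every element of $\C_s$, so the construction at stage~$s$ would force $\sigma_s \notin F(\C_{s+1})$, contradicting $\sigma \in F(\C_\infty) \subseteq F(\C_{s+1})$.

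The main obstacle is verifying that each $\C_{s+1}$ is itself a subshift. In the~$\NN$-setting of Day, this is automatic because the shift map is a total continuous self-map, so orbit closures are subshifts. In our partial-semigroup setting, $S^\ell_p(f)$ is only a factor rather than a full coloring, and to show that any factor $S^\ell_p(g)$ of $g \in \C_{s+1}$ extends to an~$h$ with $S^\ell(h) = S^\ell_p(g)$ and $F(h) \subseteq F(f_s)$, I would set up a finitely-branching tree of consistent finite extensions inside $F(f_s)$ and extract an infinite branch via~$\WKL_0$. This tree-of-extensions compactness argument, together with the duality, is where the use of arithmetic comprehension becomes essential.
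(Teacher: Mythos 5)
Your overall skeleton is the paper's (and Day's): enumerate the finite colorings, build by recursion a decreasing sequence of subshifts, killing one candidate factor at each stage where this is possible, and argue that the intersection is a minimal subshift; your minimality argument at the end is the standard one and is fine. The first genuine problem is the $\ACA_0$ bound. At each active stage you \emph{choose} a coloring $f_s \in \C_s$ omitting $\sigma_s$ and set $\C_{s+1} = \{ g \in \C_s : F(g) \subseteq F(f_s) \}$, a class whose code is only arithmetic relative to $f_s$ (indeed to the jump of $f_s$, since membership in $F(f_s)$ is $\Sigma^0_1(f_s)$). So each stage relativizes to a real produced at the previous stage, the complexity of a code for $\C_s$ grows by a fixed number of jumps per stage, and the sequence $(\C_s)_{s \in \NN}$ is not given by any single arithmetical formula in a code for $\C$; the sentence ``the whole recursion is arithmetic in the parameter coding $\C$'' is exactly the unjustified step, and as written you need something like the $\omega$-jump rather than $\ACA_0$. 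The paper avoids precisely this: it shrinks to the class of \emph{all} members of $\C_i$ omitting $h_i$ (when that class is nonempty), a class defined by an arithmetic condition mentioning no chosen member, so that the finite set of active stages is computable from a fixed jump of the code of $\C$ and the whole sequence is uniformly arithmetic.

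The second problem is your verification that $\C_{s+1}$ is a subshift. The plan ``finitely branching tree of consistent finite extensions inside $F(f_s)$ plus $\WKL_0$'' does not work as stated: to apply weak K\"onig's lemma you must show the tree is infinite, and the natural nodes, namely factors of $f_s$ extending the given $S^\ell_p(g)$ to higher levels, need not exist beyond some finite level, because a factor of $f_s$ need not recur in $f_s$ with arbitrarily late shifts; compactness cannot manufacture these witnesses. What does produce them is the explicit ``partial shift'' from the paper's lemma that orbit closures are subshifts: define $h$ by $h(q) = g(q \cup p)$ for $q \in \FIN_A(0,\ell)$ and $h(q) = g(q)$ otherwise, and check by a short case analysis that every shifted factor of $h$ is a factor of $g$, hence lies in $F(f_s)$ --- no compactness needed. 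Since your $\C_{s+1}$ is essentially $\C_s \cap \overline{\Orb(f_s)}$, the clean repair of this step is to invoke that lemma (together with the observation that initial segments of members of $\C_{s+1}$ are realized in the closed class $\C_s$, so the intersection causes no trouble); note also that you use $F(\cdot)$ both for the set of initial segments and for the set of all shifted factors, and the duality and the stage-$s$ argument need the latter, via the subshift property. This repair, however, does not touch the jump-accumulation issue above, which is the real obstruction to carrying out your version of the construction in $\ACA_0$.
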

\begin{proof}
Let~$\C \subseteq C^{\FIN_A}$ be a subshift. Fix an enumeration of all finite colorings $h_0, h_1, \dots$ of the form $\FIN_A(0, \ell) \to C$ for some~$\ell \in \NN$. Define a sequence $\C_0 \supseteq \C_1 \supseteq \dots$ of subshifts as follows: $\C_0 = \C$. Given~$\C_i$, let~$\D$ be the class of~$f \in \C_i$ which does not contain~$h_i$ as a factor. If~$\D \neq \emptyset$, then $\C_{i+1} = \D$, otherwise $\C_{i+1} = \C_i$. Then $\bigcap_i \C_i$ is a minimal subshift.
\end{proof}

\subsection{Orbit closures}

The notion of orbit closure of a coloring is central in topological dynamics. There is no clear definition of orbit in the case of colorings of located words, but the notion of orbit closure can be directly defined.

\begin{definition}
The \emph{orbit closure} of $f \in C^{\FIN_A}$ is the collection $\overline{\Orb(f)}$ of all~$g \in C^{\FIN_A}$
such that for every~$\ell \in \NN$, and every~$q \in \FIN_A(\ell, \infty)$, there is some~$p \in \FIN_A(\ell, \infty)$ such that $S^\ell_q(g) = S^\ell_p(f)$.
\end{definition}

In particular, $\overline{\Orb(f)}$ is a $\Pi^0_1(f')$ class and is the class of path of the following tree:
$$
T_f = \bigcup_m \left\{ h \in C^{\FIN_A(0, m)} : \begin{array}{l}
 		\forall \ell < m \forall q \in \FIN_A(\ell, m)\\
 		\exists p \in \FIN_A(\ell, \infty)\ S^\ell_p(f) = S^\ell_q(h)
 \end{array}
 \right\}
$$
%The tree $T_f$ can be pruned from its leaves, to obtain the following tree:
%$$
%\hat T_f = \bigcup_\ell \{ h \in C^{\FIN_A(0, \ell)} : \forall \ell_1 > \ell \exists p \in \FIN_A(\ell_1, \infty)\ S^\ell_p(f) = h \}
%$$
Given a finite coloring $h : \FIN_A(0, \ell) \to C$, we write $[h]$ for the class of all~$f \in C^{\FIN_A}$ such that $S^\ell(f) = h$.

\begin{lemma}[$\RCA_0$]
For every~$f$, $\overline{\Orb(f)}$ is a subshift.
\end{lemma}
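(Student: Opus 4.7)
Non-emptiness ($f \in \overline{\Orb(f)}$, obtained by taking $p = q$ in the definition) and closedness (already noted in the excerpt via the tree characterization $\overline{\Orb(f)} = [T_f]$) are immediate. The substance of the lemma is the shift condition: given $h \in \overline{\Orb(f)}$, $\ell \in \NN$ and $p \in \FIN_A(\ell, \infty)$, I must construct $g \in \overline{\Orb(f)}$ with $S^\ell(g) = S^\ell_p(h)$.

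The plan is to define $g : \FIN_A \to C$ by the splicing rule $g(r) = h(r \cup p)$ if $r \in \FIN_A(0,\ell)$, and $g(r) = h(r)$ otherwise. The first clause is well defined since $\dom r \subseteq [0,\ell)$ and $\dom p \subseteq [\ell,\infty)$ are disjoint, and by construction $S^\ell(g) = S^\ell_p(h)$.

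To verify $g \in \overline{\Orb(f)}$, fix $\ell' \in \NN$ and $q' \in \FIN_A(\ell', \infty)$; the task is to produce $P' \in \FIN_A(\ell', \infty)$ with $f(r \cup P') = g(r \cup q')$ for every $r \in \FIN_A(0, \ell')$. The key observation is that $\max \dom (r \cup q') = \max \dom q'$ (since $\max \dom r < \ell' \leq \min \dom q'$), so whether $r \cup q' \in \FIN_A(0, \ell)$ is controlled by $q'$ alone, and the two branches of the definition of $g$ apply uniformly in $r$. If $\max \dom q' \geq \ell$, then $r \cup q' \notin \FIN_A(0, \ell)$, so $g(r \cup q') = h(r \cup q')$, and the orbit-closure property of $h$ applied to $q'$ supplies the required $P'$. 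If $\max \dom q' < \ell$, then necessarily $\ell' < \ell$, $r \cup q' \in \FIN_A(0, \ell)$ and $g(r \cup q') = h(r \cup q' \cup p)$; since $\dom q' \subseteq [\ell', \ell)$ and $\dom p \subseteq [\ell, \infty)$ are disjoint, $q' \cup p \in \FIN_A(\ell', \infty)$ and the orbit-closure property of $h$ applied to $q' \cup p$ supplies $P'$.

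The main difficulty I foresee is identifying the right $g$: the naive analogue of the $(\NN, +)$ shift $T^p h$ is ambiguous in the partial semigroup $\FIN_A$, since there is no canonical way to remove $\dom p$ from a located word whose domain meets $\dom p$. The splicing above sidesteps this by altering $h$ only on the finite initial block $\FIN_A(0, \ell)$, which is exactly the data needed to realize $S^\ell_p(h)$, while leaving the rest of $h$ untouched so that orbit-closure membership is essentially inherited from $h$.
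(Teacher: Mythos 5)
Your proof is correct and follows essentially the same route as the paper's: the same splicing construction (modify $h$ only on $\FIN_A(0,\ell)$ by precomposing with $\cup\, p$) and the same case split, according to whether the translating word lies entirely below $\ell$, using the orbit-closure property of the original coloring applied to either $q'$ or $q' \cup p$. Your explicit remark that $\max\dom(r\cup q') = \max\dom q'$, so the branch of the definition is uniform in $r$, is a point the paper leaves implicit, but the argument is otherwise identical.
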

\begin{proof}
$\overline{\Orb(f)}$  is clearly closed as $\overline{\Orb(f)} = [T_f]$.

Fix some~$g \in \overline{\Orb(f)}$, $\ell \in \NN$ and $p \in \FIN_A(\ell, \infty)$. %By closure, it suffices to show that for every~$m > \dom(p)$, there is some~$q \in \FIN_A(m, \infty)$ such that
Let us show that there is some~$h \in \overline{\Orb(f)}$ such that $S^\ell(h) = S^\ell_p(g)$.
Let~$h$ be defined by $h(q) = g(q \cup p)$ for~$q \in \FIN_A(0, \ell)$, and $h(q) = g(q)$ otherwise. By definition, $S^\ell(h) = S^\ell_p(g)$.
We claim that $h \in \overline{\Orb(f)}$.
Fix some~$\ell_1 \in \NN$, some~$q \in \FIN_A(\ell_1, \infty)$. If~$q \in \FIN_A(0, \ell)$, then $S^{\ell_1}_q(h) = S^{\ell_1}_{q \cup p}(g)$. Since~$g \in \overline{\Orb(f)}$, there is some~$p_1 \in \FIN_A(\ell_1, \infty)$ such that $S^{\ell_1}_{q \cup p}(g) = S^{\ell_1}_{p_1}(f)$. In particular $S^{\ell_1}_q(h) = S^{\ell_1}_{p_1}(f)$.
If $q \not \in \FIN_A(0, \ell)$, then $S^{\ell_1}_q(h) = S^{\ell_1}_q(g)$. Again, since $g \in \overline{\Orb(f)}$, there is some~$p_1 \in \FIN_A(\ell_1, \infty)$ such that $S^{\ell_1}_q(g) = S^{\ell_1}_{p_1}(f)$.
In both cases, $S^{\ell_1}_q(h) = S^{\ell_1}_{p_1}(f)$ for some~$p_1 \in \FIN_A(\ell_1, \infty)$.
\end{proof}

\begin{lemma}[$\ACA_0$, Kreuzer~\cite{kreuzer2013minimal}]\label[lemma]{lem:ur-orbit-minimal}
If~$f$ is uniformly recurrent, then $\overline{\Orb(f)}$ is minimal.
\end{lemma}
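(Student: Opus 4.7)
The plan is to show that any nonempty subshift $\C \subseteq \overline{\Orb(f)}$ equals $\overline{\Orb(f)}$, which proceeds in two stages. The key step is the sub-claim that $f \in \overline{\Orb(g)}$ for every $g \in \overline{\Orb(f)}$, established as follows. Given a target shifted factor $S^\ell_q(f)$ with $q \in \FIN_A(\ell, \infty)$, pick $\ell^* > \max \dom q$ and apply uniform recurrence of $f$ at level $\ell^*$ to obtain a bound $m^* > \ell^*$. Choose any $q' \in \FIN_A(m^*, \infty)$; since $g \in \overline{\Orb(f)}$, there is $p' \in \FIN_A(m^*, \infty)$ with $S^{m^*}_{q'}(g) = S^{m^*}_{p'}(f)$, and uniform recurrence applied to $p'$ yields $q_{\mathrm{mid}} \in \FIN_A(\ell^*, m^*)$ with $S^{\ell^*}_{q_{\mathrm{mid}} \cup p'}(f) = S^{\ell^*}(f)$. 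Composing these two equalities and restricting to inputs of the form $q_4 \cup q$ with $q_4 \in \FIN_A(0, \ell)$ yields $S^\ell_{q \cup q_{\mathrm{mid}} \cup q'}(g) = S^\ell_q(f)$, as desired.

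For the second stage, I would show that $\overline{\Orb(g)} \subseteq \C$ whenever $g \in \C$: given $h \in \overline{\Orb(g)}$ and $\ell \in \NN$, the factor $S^\ell(h)$ coincides with some shifted factor $S^\ell_p(g)$, and the subshift property of $\C$ applied to $g$ produces $h_\ell \in \C$ with $S^\ell(h_\ell) = S^\ell_p(g) = S^\ell(h)$. Since this holds for every $\ell$ and $\C$ is closed as a $\Pi^0_1$ class, $h \in \C$. Combining both stages: picking any $g \in \C$, the sub-claim gives $f \in \overline{\Orb(g)}$ and the inclusion gives $f \in \C$; a second application of the inclusion with $f$ in place of $g$ yields $\overline{\Orb(f)} \subseteq \C$, and hence equality.

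The main obstacle is the sub-claim, where it is essential to apply uniform recurrence at the level $\ell^* > \max \dom q$ rather than at $\ell$ itself, so that the final restriction produces the specific shifted factor $S^\ell_q(f)$ and not merely $S^\ell(f)$. The argument fits comfortably within $\ACA_0$, as the only non-effective choices are the existence of $p'$ (from $g \in \overline{\Orb(f)}$) and of $q_{\mathrm{mid}}$ (from uniform recurrence), both directly provided by the hypotheses.
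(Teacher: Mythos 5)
Your proof is correct, and it rests on the same combinatorial composition as the paper's: invoke uniform recurrence of $f$ at a level sitting above the level at which $g \in \overline{\Orb(f)}$ is used, and splice the two equalities of shifted factors. The organization, however, is genuinely different. The paper argues by contradiction: if $\overline{\Orb(f)}$ were not minimal, closedness of the smaller orbit closure yields a single $\ell$ with $\overline{\Orb(g)} \cap [S^\ell(f)] = \emptyset$, after which one only needs to realize the \emph{unanchored} factor $S^\ell(f)$ as some $S^\ell_q(g)$, so no care about anchors is needed. You instead prove directly the stronger claim that $f \in \overline{\Orb(g)}$ for every $g \in \overline{\Orb(f)}$, which forces you to match an arbitrary anchored factor $S^\ell_q(f)$; your choice $\ell^* > \max \dom q$ is exactly the right fix, and the verification goes through (note that the witness $q \cup q_{\mathrm{mid}} \cup q'$ indeed lies in $\FIN_A(\ell,\infty)$, as the definition of orbit closure requires). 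You also make explicit the fact, used silently in the paper (for instance in \Cref{lem:word-existence-minimal-in-orbit-closure} and in the first line of its own proof of this lemma), that a subshift containing $g$ contains $\overline{\Orb(g)}$. What your route buys: it is direct, it avoids the compactness step of extracting $\ell$ from $\overline{\Orb(g)} \cap [S^\ell(f)] = \emptyset$, and it isolates the reusable statement $f \in \overline{\Orb(g)}$. What the paper's route buys: brevity, since matching only $S^\ell(f)$ dispenses with the $\ell^*$ maneuver. One convention worth flagging: both your second stage and the paper implicitly allow the empty anchor in the definition of orbit closure (so that $S^\ell(h)$ itself must be matched by a shifted factor); this is consistent with the paper's own usage (e.g.\ ``$S^m(g) = S^m_p(f)$'' in its proof), so it is a notational point rather than a gap.
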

\begin{proof}
Suppose that~$\overline{\Orb(f)}$ is not minimal. Then there is some~$g \in \overline{\Orb(f)}$
such that $\overline{\Orb(g)} \subsetneq \overline{\Orb(f)}$. In particular, $f \not \in \overline{\Orb(g)}$, so there some~$\ell \in \NN$ such that $\overline{\Orb(g)} \cap [S^\ell(f)] = \emptyset$.

Since~$f$ is uniformly recurrent, there is a bound~$m > \ell$ such that for every~$p \in \FIN_A(m, \infty)$, there is some~$q \in \FIN_A(\ell, m)$ such that $S^\ell_{q \cup p}(f)  = S^\ell(f)$. Since~$g \in \overline{\Orb(f)}$, there is some~$p \in \FIN_A(m, \infty)$ such that $S^m(g) = S^m_p(f)$. In particular, for every $q \in \FIN_A(\ell, m)$, $S^\ell_q(g) = S^\ell_{q \cup p}(f)$. Let~$q \in \FIN_A(\ell, m)$ be such that $S^\ell_{q \cup p}(f) = S^\ell(f)$. Then $S^\ell_q(g) = S^\ell_{q \cup p}(f) = S^\ell(f)$. But by definition of a subshift, there is some~$h \in \overline{\Orb(Y)}$ such that $S^\ell(h) = S^\ell_q(g)$, contradicting the fact that $\overline{\Orb(g)} \cap [S^\ell(f)] = \emptyset$.
\end{proof}

\begin{lemma}[$\ACA_0$, Day~\cite{day2016strength}]\label[lemma]{lem:orb-minimal-ur}
If~$\overline{\Orb(f)}$ is minimal, then $f$ is uniformly recurrent.
\end{lemma}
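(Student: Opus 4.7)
The plan is to observe that this is essentially a one-line consequence of the earlier material once we verify that $f$ itself belongs to $\overline{\Orb(f)}$. First I would check that $f \in \overline{\Orb(f)}$: given any $\ell \in \NN$ and $q \in \FIN_A(\ell,\infty)$, the witness $p = q$ trivially satisfies $S^\ell_q(f) = S^\ell_p(f)$, so $f$ lies in its own orbit closure. Next, by the lemma just above (that $\overline{\Orb(f)}$ is always a subshift), $\overline{\Orb(f)}$ is a subshift containing $f$.

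Now the hypothesis says $\overline{\Orb(f)}$ is a \emph{minimal} subshift. I would then invoke \Cref{lem:words-minimal-subshift-ur}, which asserts that every element of a minimal subshift is uniformly recurrent. Applying this to $f \in \overline{\Orb(f)}$ gives the conclusion.

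Regarding formal strength: \Cref{lem:words-minimal-subshift-ur} is stated over $\WKL_0$, which is a subsystem of $\ACA_0$, so the argument fits within $\ACA_0$ as required. There is no real obstacle here; the only subtlety is confirming that $f \in \overline{\Orb(f)}$, which is immediate from the definition. This lemma, paired with \Cref{lem:ur-orbit-minimal}, establishes the equivalence between uniform recurrence of $f$ and minimality of its orbit closure, mirroring the classical situation in topological dynamics for colorings of $\NN$.
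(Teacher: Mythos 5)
Your proposal is correct and follows essentially the same route as the paper: note that $f$ lies in its own orbit closure (immediate from the definition, taking $p = q$), so $\overline{\Orb(f)}$ is a minimal subshift containing $f$, and conclude by \Cref{lem:words-minimal-subshift-ur}. The paper's proof is exactly this two-line argument, with the membership $f \in \overline{\Orb(f)}$ left implicit.
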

\begin{proof}
$\overline{\Orb(f)}$ is a minimal subshift containing~$f$. By \Cref{lem:words-minimal-subshift-ur},
$f$ is uniformly recurrent.
\end{proof}

\begin{lemma}[$\ACA_0$]\label[lemma]{lem:word-existence-minimal-in-orbit-closure}
For every~$f \in C^{\FIN_A}$, there is a coloring~$g \in \overline{\Orb(f)}$ such that $\overline{\Orb(g)}$ is minimal.
\end{lemma}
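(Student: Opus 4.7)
The plan is to chain together the three preceding lemmas about subshifts and uniform recurrence. First I would observe that $\overline{\Orb(f)}$ is itself a subshift by the lemma immediately preceding~\Cref{lem:ur-orbit-minimal}, so applying \Cref{lem:words-subshift-minimal-subshift} to it yields a minimal subshift $\D \subseteq \overline{\Orb(f)}$. This step uses $\ACA_0$, which matches the hypothesis of the statement we want to prove.

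Next I would pick any $g \in \D$. By \Cref{lem:words-minimal-subshift-ur}, every element of a minimal subshift is uniformly recurrent, so $g$ is uniformly recurrent; this step requires only $\WKL_0$ and so is available in $\ACA_0$. Then \Cref{lem:ur-orbit-minimal} applies directly to give that $\overline{\Orb(g)}$ is minimal. It remains to check that $g$ actually lies in $\overline{\Orb(f)}$, which is immediate since $\D \subseteq \overline{\Orb(f)}$ by construction.

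The only step that requires any genuine thought is verifying that the minimal subshift $\D$ really is a subclass of $\overline{\Orb(f)}$ as a subshift, which is automatic from the construction in \Cref{lem:words-subshift-minimal-subshift} (the inductively defined $\C_i$ all lie in $\C_0 = \overline{\Orb(f)}$). There is no genuine obstacle here; the work has been done in the preceding lemmas, and the proof is essentially a three-line composition: subshift $\Rightarrow$ minimal subshift exists $\Rightarrow$ its elements are uniformly recurrent $\Rightarrow$ their orbit closures are minimal.
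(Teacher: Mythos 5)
Your argument is correct, and it starts exactly as the paper does: apply \Cref{lem:words-subshift-minimal-subshift} to the subshift $\overline{\Orb(f)}$ to obtain a minimal subshift $\D \subseteq \overline{\Orb(f)}$, and pick $g \in \D$. The two proofs diverge only in how they conclude that $\overline{\Orb(g)}$ is minimal. The paper argues directly from minimality of $\D$: since $\overline{\Orb(g)}$ is a subshift and $\overline{\Orb(g)} \subseteq \D$ (every factor of an element of $\overline{\Orb(g)}$ is realized in the closed shift-invariant class $\D$), minimality forces $\overline{\Orb(g)} = \D$, so it is minimal. You instead detour through uniform recurrence, invoking \Cref{lem:words-minimal-subshift-ur} to get that $g$ is uniformly recurrent and then \Cref{lem:ur-orbit-minimal} (Kreuzer's lemma) to get minimality of $\overline{\Orb(g)}$. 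Both routes stay within $\ACA_0$, so the stated base theory is respected either way. The trade-off: your route leans on two extra lemmas, including \Cref{lem:ur-orbit-minimal}, which is a genuinely stronger dynamical fact than what is needed here, but in exchange you never have to verify the inclusion $\overline{\Orb(g)} \subseteq \D$, which is the one small point the paper's two-line proof leaves implicit. So your proof is a valid, slightly less economical composition of the surrounding lemmas rather than the paper's direct minimality argument.
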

\begin{proof}
By \Cref{lem:words-subshift-minimal-subshift}, $\overline{\Orb(f)}$ contains a minimal subshift~$\D \subseteq \overline{\Orb(f)}$. Let~$g \in \D$. In particular, $\overline{\Orb(g)} \subseteq \D$ is a subshift. By minimality of~$\D$, $\overline{\Orb(g)} = \D$.
\end{proof}

\bigskip
\section{Auslander-Ellis theorem for $C^{\FIN_A}$}\label[section]{sect:aet}

We have see that recurrent colorings are simple instances of Carlson's theorem for located words. Although colorings are not recurrent in general, the Auslander-Ellis theorem says that every coloring is close to a recurrent coloring. Here again, due to the non-linearity of the ordering of words, there exists multiple notions of proximality.

\begin{definition}
Fix two colorings~$f, g : \FIN_A \to C$.
\begin{enumerate}
	\item $f$ and $g$ are \emph{weakly proximal} if for every~$\ell\in \NN$,
there is a located word~$p \in \FIN_A(\ell, \infty)$ such that $S^\ell_p(g) = S^\ell_p(f)$.
	\item $f$ and $g$ are \emph{proximal} if for every~$\ell\in \NN$,
there is a located variable word~$p \in \FIN_{A \varp}(\ell, \infty)$ such that
for each~$a \in A$, $S^\ell_{p[a]}(g) = S^\ell_{p[a]}(f)$.
	\item $g$ is \emph{strongly proximal} to~$f$ if for every~$\ell \in \NN$,
there is a located variable word~$p \in \FIN_{A \varp}(\ell, \infty)$ such that for each~$a \in A$,
$S^\ell(g) = S^\ell_{p[a]}(g) = S^\ell_{p[a]}(f)$.
\end{enumerate}
\end{definition}

Note that if $f$ is recurrent, then it is strongly proximal to itself.
There exists multiple candidate statements for adapting the Auslander-Ellis theorem to~$C^{\FIN_A}$ since there exists three notions of proximality. Two of them are of interest:

\begin{theorem}[Auslander-Ellis for $C^{\FIN_A}$]\label[theorem]{thm:aet-for-words}
For every coloring~$f : \FIN_A \to C$, there is a uniformly recurrent coloring~$g : \FIN_A \to C$ weakly proximal to~$f$.
\end{theorem}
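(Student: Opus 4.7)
The plan is to construct a uniformly recurrent coloring $g$ weakly proximal to $f$ by taking a pointwise limit of shifts of $f$ along an infinite block sequence produced by an iterated application of the Finite Union theorem. This serves as the combinatorial replacement for the classical proof via a minimal idempotent in the Ellis enveloping semigroup.

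For each $\ell \in \NN$, the assignment $p \mapsto S^\ell_p(f)$ is a finite coloring of $\FIN_A(\ell, \infty)$ with at most $|C|^{|\FIN_A(0, \ell)|}$ values. I would first apply an iterated Finite Union theorem to the sequence of colorings $(p \mapsto S^\ell_p(f))_{\ell \in \NN}$, obtaining an infinite block sequence $X = \{p_0 < p_1 < \dots\} \subseteq \FIN_{A \varp}$ such that for every $\ell$, the value $S^\ell_q(f)$ stabilizes (to some common $h_\ell : \FIN_A(0, \ell) \to C$) as $q$ ranges over $[X - F]_A$ for $F$ a suitable finite initial segment of $X$. Defining $g : \FIN_A \to C$ by setting $g(q) := h_\ell(q)$ for any $\ell > \max \dom q$ yields a coloring that is simultaneously the shift-limit of $f$ along $X$ and the ``diagonal target'' playing the role of $uf$ in the classical proof.

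To verify that $g$ is uniformly recurrent, I would use the stabilization property combined with the closure of $[X]_A$ under unions: given $\ell$, take $m$ large enough that $S^\ell_q(f) = h_\ell = S^\ell(g)$ for every $q \in [X - F]_A$ with $F$ finite and $\min \dom q \geq \ell$; then for every $p \in \FIN_A(m, \infty)$, one can find $q \in [X]_A \cap \FIN_A(\ell, m)$ so that $S^\ell_{q \cup p}(g) = S^\ell(g)$, which is the defining property of uniform recurrence. Weak proximality of $f$ and $g$ then follows from the same construction: for every $\ell$, choose $p \in [X]_A$ with sufficiently large domain so that $S^\ell_p(f) = h_\ell = S^\ell(g)$; invoking uniform recurrence of $g$ one more time yields a single shift $p' \in \FIN_A(\ell, \infty)$ with $S^\ell_{p'}(f) = S^\ell_{p'}(g)$.

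The main obstacle is the very first step: formulating and establishing an iterated Finite Union theorem strong enough to stabilize the infinitely many colorings $(p \mapsto S^\ell_p(f))_{\ell \in \NN}$ simultaneously, rather than one $\ell$ at a time. This is a Hales-Jewett-flavoured iteration over located words, and it is where the strength $\ACA^+_0$ enters the argument; everything downstream — the verification of uniform recurrence and of weak proximality — is then a matter of unwinding the stabilization property together with the topological-dynamics lemmas of \Cref{sect:topo-dynamics}.
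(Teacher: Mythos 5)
Your first step is fine and matches \Cref{lem:vw-fs2-limit-exists}: an iterated Finite Union theorem (in fact $\IFUT^{\leq 2}$ suffices) stabilizes the colorings $p \mapsto S^\ell_p(f)$ along a weak block sequence $X$, and the resulting limit $g = \fulim_X(f)$ is weakly proximal to $f$, essentially by the two-term argument of \Cref{lem:vw-flim-strong-prox} (note that weak proximality needs no appeal to uniform recurrence, contrary to your last step). The genuine gap is the claim that this limit $g$ is automatically uniformly recurrent. Stabilization only controls values of the form $g(r \cup q) = \lim_s f(r \cup q \cup s)$ when the shifting word lies in $[X]_A$; uniform recurrence quantifies over \emph{arbitrary} $p \in \FIN_A(m,\infty)$, and for such $p$ the quantity $S^\ell_{q \cup p}(g)$ is simply a restriction of the limit coloring over which the construction gives no control. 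Your sentence ``for every $p \in \FIN_A(m,\infty)$ one can find $q \in [X]_A \cap \FIN_A(\ell,m)$ so that $S^\ell_{q\cup p}(g) = S^\ell(g)$'' has no justification; already for colorings of $\NN$, an IP-limit of shifts of an arbitrary coloring can fail to be recurrent at all (the pattern at the origin of the limit need never recur).

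The missing idea is the passage through minimal subshifts, which is exactly where the classical proof (and the paper) does real work beyond the iterated pigeonhole. The paper first uses $\ACA_0$ to find $g \in \overline{\Orb(f)}$ whose orbit closure is minimal (\Cref{lem:words-subshift-minimal-subshift}, \Cref{lem:word-existence-minimal-in-orbit-closure}), and then, in \Cref{lem:vw-fs2-ur-limit-exists}, chooses the block sequence along which the $\fulim$ of $f$ is taken so that the limit $h$ is forced to lie \emph{inside} the minimal subshift $\overline{\Orb(g)}$ (by arranging each tail of the sequence to push the shifted colorings into the tree coding $\overline{\Orb(g)}$). Uniform recurrence of $h$ then comes from minimality via \Cref{lem:words-minimal-subshift-ur}, not from the stabilization itself, while weak proximality to $f$ still comes from the limit property. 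Without this extra constraint on where the limit lands, your construction only produces a coloring weakly proximal to $f$ that is recurrent along $[X]_A$-shifts, which is strictly weaker than what the theorem asserts.
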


\begin{theorem}[Weak Auslander-Ellis for $C^{\FIN_A}$]\label[theorem]{thm:weak-aet-for-words}
For every coloring~$f : \FIN_A \to C$, there is a coloring~$g : \FIN_A \to C$ strongly proximal to~$f$.
\end{theorem}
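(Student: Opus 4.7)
The plan is to produce a uniformly recurrent $g$ in the orbit closure of $f$, and then to extract, for each $\ell$, a variable word witnessing strong proximality via a Hales--Jewett argument. First I would apply \Cref{lem:word-existence-minimal-in-orbit-closure} to obtain $g \in \overline{\Orb(f)}$ with $\overline{\Orb(g)}$ minimal; by \Cref{lem:orb-minimal-ur} this $g$ is uniformly recurrent, so in particular recurrent, and it will be the candidate coloring.

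To verify strong proximality, fix $\ell \in \NN$. Uniform recurrence supplies $m > \ell$ such that every $r \in \FIN_A(m, \infty)$ admits some $q \in \FIN_A(\ell, m)$ with $S^\ell_{q \cup r}(g) = S^\ell(g)$. I would then apply \Cref{thm:hales-jewett} to the pair-valued coloring $\phi : \FIN_A(m, \infty) \to C^{\FIN_A(0,m)} \times C^{\FIN_A(0,m)}$ defined by $\phi(r) = (S^m_r(f), S^m_r(g))$ to obtain a variable word $p' \in \FIN_{A\varp}(m, \infty)$ on which $\phi$ is constant with value $(\alpha, \beta)$. Applying uniform recurrence to $p'[a_0]$ for a fixed $a_0 \in A$ produces $q \in \FIN_A(\ell, m)$ satisfying $\beta(s \cup q) = S^\ell(g)(s)$ for every $s \in \FIN_A(0, \ell)$; setting $p := q \cup p' \in \FIN_{A\varp}(\ell, \infty)$, the homogeneity of $\phi$ then propagates to $S^\ell_{p[a]}(g) = S^\ell(g)$ for all $a \in A$, while $S^\ell_{p[a]}(f)$ collapses to some fixed $\gamma$ independent of $a$.

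The main obstacle is forcing $\gamma = S^\ell(g)$, since Hales--Jewett yields a constant without prescribing its value. My plan to close this gap is to restart the construction inside the product space $(C \times C)^{\FIN_A}$: apply the minimal-subshift machinery to $\overline{\Orb((f,f))}$ and pick a minimal pair $(g_1, g_2)$. The orbit closure condition forces $S^\ell_q(g_1) = S^\ell_p(f) = S^\ell_q(g_2)$ for every $\ell$ and $q \in \FIN_A(\ell, \infty)$, so $g_1$ and $g_2$ coincide on all non-singleton elements, and I take the common value as $g$. The product statement $(g, g) \in \overline{\Orb((f,f))}$ now ensures that for every $q$ witnessing $S^\ell_q(g) = S^\ell(g)$ there is a $p$ with $S^\ell_p(f) = S^\ell(g)$, a form of joint syndeticity of the recurrence times of $g$ and the $f$-shifts landing on $S^\ell(g)$. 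Refining $\phi$ to also record membership in the set $\{r \in \FIN_A(\ell,\infty) : S^\ell_r(f) = S^\ell(g)\}$ and exploiting this joint syndeticity should force the Hales--Jewett constant to land at $S^\ell(g)$ on both coordinates. As a fallback consistent with the authors' promise to derive AET from the iterated Finite Union theorem, one could replace the single-use Hales--Jewett step by an iterated homogeneity extraction, simultaneously handling all levels $\ell$ and bypassing the constant-value ambiguity outright.
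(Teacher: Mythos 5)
There is a genuine gap, and it sits exactly where you flagged it: forcing the Hales--Jewett constant to equal $S^\ell(g)$. Your candidate $g$ is only known to lie in $\overline{\Orb(f)}$ and to be uniformly recurrent. Orbit-closure membership says that every factor of $g$ occurs as a factor of $f$ \emph{at some location}, while strong (or even weak) proximality demands $S^\ell_{p}(g) = S^\ell_{p}(f)$ for the \emph{same} shift $p$; nothing in minimality, uniform recurrence, or Hales--Jewett couples the $f$-shifts and the $g$-shifts at a common location. Your second paragraph in fact only re-proves that a uniformly recurrent coloring is recurrent ($S^\ell_{p[a]}(g)=S^\ell(g)$ for all $a$), plus constancy of $S^\ell_{p[a]}(f)$ in $a$ --- but with no control on its value. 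The product-space fix does not help: every $(g_1,g_2)\in\overline{\Orb((f,f))}$ has $S^\ell_q(g_1)=S^\ell_q(g_2)$ for all $q$, so you just recover a single $g\in\overline{\Orb(f)}$ again, and the consequence you extract (``for every $q$ with $S^\ell_q(g)=S^\ell(g)$ there is \emph{some} $p$ with $S^\ell_p(f)=S^\ell(g)$'') is nothing more than orbit-closure membership, with $p$ unrelated to $q$. The ``joint syndeticity'' and the hope that refining $\phi$ ``should force'' the constant to land on $S^\ell(g)$ are unsubstantiated: Hales--Jewett never lets you prescribe the homogeneous value, and you never show that the class $\{r : S^\ell_r(f)=S^\ell(g)\}$ is large in any sense that would make it unavoidable.

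The missing ingredient is precisely the Hindman-type input. In the paper, proximality is produced by an IP-limit construction: using $\IFUT^{\leq 2}$ one builds a weak block sequence $X$ and $g=\fulim_X(f)$ (\Cref{lem:vw-fs2-limit-exists}, \Cref{lem:vw-fs2-ur-limit-exists}), and the identity $f(w\cup p\cup q)=g(w\cup p)$ for $p,p\cup q\in[X-F]^{\leq 2}_A$ is what ties the two colorings together at the same shifts (\Cref{lem:vw-flim-strong-prox}); minimality of $\overline{\Orb(g)}$ is used only to make the limit uniformly recurrent, and \emph{then} your Hales--Jewett upgrade is legitimate --- it is exactly \Cref{lem:vw-ur-wp-strongly-proximal}, but its proof needs weak proximality as a hypothesis, which is the hypothesis you dropped. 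As a sanity check: the weak Auslander--Ellis theorem implies Carlson's theorem for located words and hence Hindman's theorem, so a proof using only Hales--Jewett and minimal-subshift existence (all available in $\ACA_0$) would place Hindman's theorem in $\ACA_0$ and settle a major open problem; this is a strong signal that the constant-value ambiguity cannot be bypassed by the tools you allow yourself. Your fallback sentence points in the right direction, but ``iterated homogeneity extraction'' must be made precise as the $\fulim$ construction from $\IFUT^{\leq 2}$, which is the actual content of the paper's proof.
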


In what follows, let $HJ(k, \ell)$ be an integer large enough so that for every~$h : \FIN_A(HJ(k, \ell)) \to \ell$, there is a located variable word~$p \in \FIN_{A \varp}(HJ(k, \ell))$ such that $[p]_A$ is $h$-homogeneous.
The following lemma shows that the Auslander-Ellis theorem for $C^{\FIN_A}$ implies its weak version.

%\ludovic{Is the next sentence true with the new version of uniform recurrence?}
%Contrary to the case of Cantor space, if~$Y$ is uniformly recurrent and proximal to~$X$,
%then $Y$ does not seem to be necessarily strongly proximal to~$X$.

\begin{lemma}[$\RCA_0$]\label[lemma]{lem:vw-ur-wp-strongly-proximal}
If~$g$ is uniformly recurrent and weakly proximal to~$f$, then it is strongly proximal to~$f$.
\end{lemma}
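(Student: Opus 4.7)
The plan is to combine uniform recurrence, the Hales--Jewett theorem, and weak proximality: uniform recurrence together with Hales--Jewett produces a single located variable word whose every substitution sends $g$ back to its initial segment, while weak proximality applied at a sufficiently high level transfers this behavior from $g$ to $f$.

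Fix $\ell \in \NN$. First, by uniform recurrence of $g$, choose $m_1 > \ell$ such that for every $p \in \FIN_A(m_1, \infty)$ there exists $q \in \FIN_A(\ell, m_1)$ with $S^\ell_{q \cup p}(g) = S^\ell(g)$. Next, using the finite version of Hales--Jewett (via the function $HJ$), pick $m_2 > m_1$ large enough that any coloring of $\FIN_A(m_1, m_2)$ with $|C|^{|\FIN_A(0, m_1)|}$ colors admits a homogeneous located variable word in $\FIN_{A\varp}(m_1, m_2)$. Finally, by weak proximality at level $m_2$, fix $u \in \FIN_A(m_2, \infty)$ with $S^{m_2}_u(g) = S^{m_2}_u(f)$.

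Define the coloring $h : \FIN_A(m_1, m_2) \to C^{\FIN_A(0, m_1)}$ by $h(w) = S^{m_1}_{w \cup u}(g)$. By Hales--Jewett there is a located variable word $v \in \FIN_{A\varp}(m_1, m_2)$ such that $h(v[a])$ is constant in $a \in A$. For any fixed $a_0 \in A$, the located word $v[a_0] \cup u$ lies in $\FIN_A(m_1, \infty)$, so by the choice of $m_1$ there is $q \in \FIN_A(\ell, m_1)$ with $S^\ell_{q \cup v[a_0] \cup u}(g) = S^\ell(g)$. The $h$-homogeneity of $[v]_A$ upgrades this to $S^\ell_{q \cup v[a] \cup u}(g) = S^\ell(g)$ for every $a \in A$. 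Setting $p = q \cup v \cup u \in \FIN_{A\varp}(\ell, \infty)$, so that $p[a] = q \cup v[a] \cup u$, we get $S^\ell_{p[a]}(g) = S^\ell(g)$. Moreover, for each $r \in \FIN_A(0, \ell)$ the word $r \cup q \cup v[a]$ has domain in $[0, m_2)$ while $u \in \FIN_A(m_2, \infty)$, so weak proximality gives $f(r \cup q \cup v[a] \cup u) = g(r \cup q \cup v[a] \cup u)$, whence $S^\ell_{p[a]}(f) = S^\ell_{p[a]}(g) = S^\ell(g)$, as required.

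The main obstacle is that uniform recurrence only produces a prefix $q$ for a \emph{fixed} located (non-variable) word in $\FIN_A(m_1, \infty)$, whereas strong proximality demands a single variable word all of whose substitutions simultaneously return $g$ to $S^\ell(g)$. The role of Hales--Jewett applied to the finer coloring $h$ is precisely to synchronize the behavior of $g$ across the substitutions of $v$, reducing the problem to a single application of uniform recurrence at $a_0$.
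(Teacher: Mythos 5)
Your proof is correct and takes essentially the same route as the paper: fix $\ell$, obtain the bound from uniform recurrence of $g$, apply the finite Hales--Jewett theorem on an intermediate interval of positions, and use weak proximality at the top level to transfer the conclusion from $g$ to $f$. The only cosmetic difference is that the paper colors the interval $\FIN_A(m,m+N)$ directly by the recurrence witness $q \in \FIN_A(\ell,m)$ and reads off a common $q$ from homogeneity, whereas you color by the full factor $S^{m_1}_{w \cup u}(g)$ and then apply uniform recurrence once at a single substitution $a_0$; the two variants are interchangeable.
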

\begin{proof}
Fix~$\ell \in \NN$. Since~$g$ is uniformly recurrent, there is some~$m > \ell$ be such that for every~$p \in \FIN_A(m, \infty)$, there is some~$q \in \FIN_A(\ell, m)$ such that $S^\ell_{q \cup p}(g) = S^\ell(g)$.
Let~$N = HJ(k, |\FIN_A(\ell, m)|)$.
Since~$g$ is weakly proximal to~$f$, there is some~$p \in \FIN_A(m+N, \infty)$ such that $S^{m+N}_p(g) = S^{m+N}_p(g)$.
Let~$h : \FIN_A(m, m+N) \to \FIN_A(\ell, m)$ be defined by $h(v) = q$ such that $S^\ell_{q \cup v \cup p}(g) = S^\ell(g)$.  By the finite Hales-Jewett theorem, there is a located variable word~$u \in \FIN_A(m, m+N)$ and some~$q \in \FIN_A(\ell, m)$ such that for every~$a \in A$, $h(u[a]) = q$.
In other words, for every~$a \in A$, $S^\ell_{q \cup u[a] \cup p}(g) = S^\ell(g)$.
Note that $S^\ell_{q \cup u[a] \cup p}(g) = S^\ell_{q \cup u[a] \cup p}(f)$.
Thus, letting~$w = q \cup u[i] \cup p$, for every~$a \in A$, $S^\ell(g) = S^\ell_{w[a]}(g) = S^\ell_{w[a]}(f)$.
\end{proof}

\begin{corollary}[$\RCA_0$]
The Auslander-Ellis theorem for~$C^{\FIN_A}$ implies its weak version.
\end{corollary}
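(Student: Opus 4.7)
The plan is to chain the full Auslander--Ellis theorem with the preceding \Cref{lem:vw-ur-wp-strongly-proximal}. Given an arbitrary coloring $f : \FIN_A \to C$, I would first apply \Cref{thm:aet-for-words} to produce a coloring $g : \FIN_A \to C$ that is simultaneously uniformly recurrent and weakly proximal to $f$. The hypothesis of \Cref{lem:vw-ur-wp-strongly-proximal} is then met verbatim, so invoking it upgrades weak proximality to strong proximality: $g$ is strongly proximal to $f$, which is exactly the conclusion demanded by \Cref{thm:weak-aet-for-words}.

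There is essentially no obstacle here, since the whole content of the implication has already been isolated in the lemma. The only thing worth remarking is that the argument is carried out in $\RCA_0$, because \Cref{lem:vw-ur-wp-strongly-proximal} is proved in $\RCA_0$ and the corollary consists in just one extra application of that lemma to the witness $g$ returned by the Auslander--Ellis theorem. Hence the proof reduces to a two-line deduction, with no induction or choice principle beyond what is already used in the lemma.
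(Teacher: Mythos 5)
Your proposal is correct and matches the paper's argument exactly: the paper also derives the weak version by applying \Cref{lem:vw-ur-wp-strongly-proximal} to the uniformly recurrent, weakly proximal coloring produced by \Cref{thm:aet-for-words}, obtaining strong proximality. Nothing further is needed.
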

\begin{proof}
Immediate by \Cref{lem:vw-ur-wp-strongly-proximal}.
\end{proof}

\subsection{Proof of Carlson's theorem for located words}

The same way Furstenberg and Weiss~\cite{furstenberg1978topological} proved that the original Auslander-Ellis theorem implies Hindman's theorem, we will prove that the weak version of the Auslander-Ellis theorem for  $C^{\FIN_A}$ implies Carlson's theorem for located words.

%The following lemma shows that weak $\AET$ for $C^{\FIN_A}$ implies Carlson's theorem for located words.

\begin{lemma}[$\RCA_0$]\label[lemma]{lem:vw-strong-prox-hom}
Suppose $f, g : \FIN_A \to C$ are colorings such that $g$ is strongly proximal to~$f$.
Then there is an infinite block sequence $X \subseteq \FIN_{A \varp}$ such that $[X]_A$ is both $f$-homogeneous and $g$-homogeneous for color~$g(\emptyset)$.	
\end{lemma}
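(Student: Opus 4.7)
The plan is to mimic the proof of \Cref{lem:recurrent-carlson}, using strong proximality in place of recurrence in order to secure homogeneity for $f$ and $g$ simultaneously. I would construct a sequence of located variable words $p_0 < p_1 < \dots$ by induction, maintaining the invariant that $[p_0, \dots, p_n]_A$ is both $f$-homogeneous and $g$-homogeneous for color $g(\emptyset)$; taking $X = \{p_n : n \in \NN\}$ then yields the desired block sequence.

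For the base case, apply strong proximality at $\ell = 0$: there is some $p_0 \in \FIN_{A\varp}$ such that for every $a \in A$, $S^0(g) = S^0_{p_0[a]}(g) = S^0_{p_0[a]}(f)$. Evaluating at $\emptyset$ gives $g(p_0[a]) = f(p_0[a]) = g(\emptyset)$, so $[\{p_0\}]_A$ is $f$- and $g$-homogeneous for color $g(\emptyset)$.

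For the inductive step, suppose $p_0 < \dots < p_n$ have been constructed. Let $\ell = 1 + \max \dom p_n$ and apply strong proximality at $\ell$ to obtain $p_{n+1} \in \FIN_{A\varp}(\ell, \infty)$ with $S^\ell(g) = S^\ell_{p_{n+1}[a]}(g) = S^\ell_{p_{n+1}[a]}(f)$ for each $a \in A$. Any $q \in [p_0, \dots, p_n]_A \cup \{\emptyset\}$ lies in $\FIN_A(0, \ell)$ and satisfies $g(q) = g(\emptyset)$ by the induction hypothesis. Evaluating the above equality of shifts at $q$ yields
\[
f(q \cup p_{n+1}[a]) = g(q \cup p_{n+1}[a]) = g(q) = g(\emptyset),
\]
so $[p_0, \dots, p_{n+1}]_A$ remains $f$- and $g$-homogeneous for color $g(\emptyset)$, completing the induction.

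I do not expect any real obstacle, as the argument parallels \Cref{lem:recurrent-carlson} almost verbatim. The one subtlety worth flagging is that strong proximality (as opposed to mere proximality) delivers a single located variable word $p$ for which, uniformly in $a \in A$, we simultaneously get the recurrence-style equality $S^\ell_{p[a]}(g) = S^\ell(g)$ that keeps $g$ constant on new sums, together with the proximality-style equality $S^\ell_{p[a]}(g) = S^\ell_{p[a]}(f)$ that transfers this control to $f$ — which is exactly what allows us to extend the block sequence while preserving homogeneity for both colorings at once.
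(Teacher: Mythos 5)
Your proof is correct and follows essentially the same route as the paper's: an inductive construction of $p_0 < p_1 < \dots$ where strong proximality at $\ell = 1 + \max \dom p_n$ supplies a single located variable word whose substitutions simultaneously preserve $g$-homogeneity and transfer it to $f$. The only cosmetic difference is that the paper evaluates the shift identities in the same way but states the two displayed equalities for $f$ and $g$ separately; nothing substantive differs.
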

\begin{proof}
We build a sequence~$p_0 < p_1 < \dots$ of located variable words inductively.

Since~$g$ is strongly proximal to~$f$, there is some located variable word~$p_0 \in \FIN_{A \varp}$ such that for each~$a \in A$, $S^0_{p_0[a]}(f) = S^0_{p_0[a]}(g) = S^0(g)$.
In particular, $f(p_0[a]) = g(p_0[a]) = g(\emptyset)$ for every~$a \in A$.

Assume~$p_0 < \dots < p_n$ are located variable words such that $[p_0, \dots, p_n]_A$ is both $f$-homogeneous and $g$-homogeneous for color~$g(\emptyset)$. Let~$\ell = 1 \max \dom p_n$.
Since~$g$ is strongly proximal to~$f$, there is some located variable word~$p_{n+1} \in \FIN_{A \varp}(\ell, \emptyset)$ such that for each~$a \in A$, $$S^\ell_{p_{n+1}[a]}(f) = S^\ell_{p_{n+1}[a]}(g) = S^\ell(g)$$
Let~$p \in [p_0, \dots, p_n]_A \cup \{\emptyset\}$. Note that $\max \dom p < \ell$ and that $g(p) = g(\emptyset)$. Then for every~$a \in A$,
$$f(p \cup p_{n+1}[a]) = S_{p_{n+1}[a]}(f)(p) = g(p) = g(\emptyset)$$
$$g(p \cup p_{n+1}[a]) = S_{p_{n+1}[a]}(g)(p) = g(p) = g(\emptyset)$$
Thus, $[p_0, \dots, p_n, p_{n+1}]_A$ is both $f$-homogeneous and $g$-homogeneous for color~$g(\emptyset)$.
\end{proof}

\begin{corollary}[$\RCA_0$]
The weak Auslander-Ellis theorem for $C^{\FIN_A}$ implies Carlson's theorem for located words.
\end{corollary}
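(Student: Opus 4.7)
The plan is to combine the weak Auslander-Ellis theorem with the previous lemma (\Cref{lem:vw-strong-prox-hom}) in essentially one step. The corollary is almost immediate once we recognize that \Cref{lem:vw-strong-prox-hom} already does the combinatorial work of extracting a homogeneous block sequence from a strongly proximal pair.

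First, I would take an arbitrary finite coloring $f : \FIN_A \to C$, which is an instance of Carlson's theorem for located words. Applying the weak Auslander-Ellis theorem for $C^{\FIN_A}$ to $f$, I obtain a coloring $g : \FIN_A \to C$ that is strongly proximal to $f$. Then \Cref{lem:vw-strong-prox-hom} applied to the pair $(f, g)$ directly yields an infinite block sequence $X \subseteq \FIN_{A \varp}$ such that $[X]_A$ is both $f$-homogeneous and $g$-homogeneous, with common color $g(\emptyset)$. In particular $[X]_A$ is $f$-homogeneous, which is exactly the conclusion of Carlson's theorem for located words.

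There is no real obstacle here, since the heart of the argument has already been isolated in \Cref{lem:vw-strong-prox-hom}. The only thing to verify is that the deduction can be carried out in $\RCA_0$, but both the invocation of the weak Auslander-Ellis theorem and the application of the previous lemma are single uses of principles available in $\RCA_0$ (the lemma itself is proved over $\RCA_0$), so the full derivation takes place over $\RCA_0$ without any additional induction or comprehension.
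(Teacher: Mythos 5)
Your proof is correct and follows exactly the paper's route: apply the weak Auslander-Ellis theorem to obtain a coloring $g$ strongly proximal to $f$, then invoke \Cref{lem:vw-strong-prox-hom} to get the $f$-homogeneous block sequence, all over $\RCA_0$. The paper simply states this as ``Immediate by \Cref{lem:vw-strong-prox-hom}'', so your write-up is just a slightly more explicit version of the same argument.
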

\begin{proof}
Immediate by \Cref{lem:vw-strong-prox-hom}.
\end{proof}

\subsection{Proof of the Auslander-Ellis theorem for~$C^{\FIN_A}$}

The purpose of this section is to prove the Auslander-Ellis theorem for~$C^{\FIN_A}$ from an iterated version of the Finite Union theorem. Together with the previous section, we will obtain a proof of Carlson's theorem for located words from the iterated Finite Union theorem. The proof follows the structure of Section~5 of Blass, Hirst and Simpson~\cite{blass1987logical}.

We can actually refine our statement about the Finite Union theorem and consider only bounded unions. Given some~$r \in \NN$, let
$$
\FU^{\leq r}(X) = \{ \cup F : F \subseteq X \wedge 0 < |F| \leq n \}
$$

\begin{theorem}[Iterated Finite Union (bounded version)]
Let $X \subseteq \P_f(\NN)$ be an infinite block sequence.
For every sequence $(f_n)_{n \in \NN}$ of colorings $\FU(X) \to C$, there is an infinite block sequence $Y \subseteq \FU(X)$ such that for every~$n$, there is some finite set~$F \subseteq Y$ such that $\FU^{\leq r}(Y - F)$ is $f_n$-homogeneous.
\end{theorem}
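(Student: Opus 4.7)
The plan is to derive this statement as an immediate corollary of the iterated Towsner-style theorem proved at the end of Section~\ref{sect:towsner-ct}, by specializing the alphabet to a single letter. Let $A = \{a\}$ be a singleton alphabet. Then a located word over~$A$ is completely determined by its (finite, non-empty) domain, since it can only take the value~$a$. This gives a canonical bijection between $\FIN_A$ and $\P_f(\NN)$, under which $[X']_A$ becomes $\FU(X)$ for an appropriately chosen lift~$X'$ of any block sequence~$X \subseteq \P_f(\NN)$.

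More concretely, I would first lift the given block sequence $X = \{E_0 < E_1 < \dots\}$ to a block sequence $X' = \{p_0 < p_1 < \dots\} \subseteq \FIN_{A \varp}$, where $p_n$ is the partial function with $\dom p_n = E_n$ taking the constant value~$\varp$. Any element of $[X']_A$ has the form $p_{n_0}[a] \cup \dots \cup p_{n_k}[a]$, which is the function with constant value~$a$ on $E_{n_0} \cup \dots \cup E_{n_k}$; hence $q \mapsto \dom q$ is a bijection from $[X']_A$ to $\FU(X)$. I then translate the input colorings $f_n : \FU(X) \to C$ into colorings $g_n : [X']_A \to C$ defined by $g_n(q) = f_n(\dom q)$.

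Applying the final theorem of Section~\ref{sect:towsner-ct} (valid in $\ACA^{+}_0$) to $X'$ and $(g_n)_{n \in \NN}$ yields an infinite block sequence $Y' \subseteq [X']_{A \varp}$ such that for every $n$, there is a finite set $F' \subseteq Y'$ with $[Y' - F']_A$ being $g_n$-homogeneous. I then project back by setting $Y = \{\dom q : q \in Y'\}$ and $F = \{\dom q : q \in F'\}$. A routine verification shows that $Y$ is an infinite block sequence in $\FU(X)$, and that any finite union $\dom q_1 \cup \dots \cup \dom q_k$ of elements of $Y - F$ arises as $\dom(q_1[a] \cup \dots \cup q_k[a])$ for some element of $[Y' - F']_A$. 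By $g_n$-homogeneity, $f_n$ is constant on all of $\FU(Y - F)$, and in particular on $\FU^{\leq r}(Y - F)$ for any~$r \in \NN$.

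There is essentially no obstacle beyond bookkeeping: no new combinatorial content is needed, and all the heavy lifting has already been done by the Towsner-style proof of iterated Carlson for located words. The only point requiring mild care is checking that the domain map induces a correspondence preserving the notions of block sequence, extraction, and homogeneity; this is a straightforward consequence of the definition of $[X']_A$ when $|A| = 1$. Note that one could alternatively prove the bounded version directly by iterating the Strong Finite Union theorem and diagonalizing via $\WKL_0$ (mirroring the WKL step at the end of Section~\ref{sect:towsner-ct}), but this merely reproduces a special case of the machinery already available.
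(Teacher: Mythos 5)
Your reduction is correct: over a one-letter alphabet $A=\{a\}$, the domain map identifies $\FIN_A$ with $\P_f(\NN)$, your lift $X'$ of $X$ is a block sequence in $\FIN_{A\varp}$ with $[X']_A$ corresponding to $\FU(X)$, and applying the iterated Towsner-style theorem at the end of \Cref{sect:towsner-ct} to the pulled-back colorings $g_n$ and projecting back yields a block sequence $Y\subseteq\FU(X)$ on which each $f_n$ is eventually homogeneous on \emph{all} finite unions, hence a fortiori on $\FU^{\leq r}$. However, this is a genuinely different route from the paper's: the paper gives no proof of this statement at all, but treats $\IFUT^{\leq r}$ as a principle whose provability in $\ACA^+_0$ is quoted as known, via the classical analysis of the iterated Hindman/Finite Union theorem of Blass, Hirst and Simpson~\cite{blass1987logical} (essentially your parenthetical alternative of iterating the strong Finite Union theorem and diagonalizing). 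The distinction matters for the architecture of the paper rather than for correctness: \Cref{sect:aet} is meant to give a \emph{second}, independent proof of Carlson's theorem for located words in $\ACA^+_0$ by deriving it from $\IFUT^{\leq 2}$, so justifying $\IFUT^{\leq 2}$ through the Towsner-style iterated Carlson theorem makes that second proof depend on the first and loses the independence (the reverse-mathematical content, namely $\RCA_0\vdash\IFUT^{\leq 2}\rightarrow\FUT$ in \Cref{cor:bounded-fut-implies-fut}, is unaffected, since there the principle is a hypothesis, not something to be proved). What your route buys is economy and even a stronger conclusion (unbounded unions) from machinery already in the paper; what the paper's route buys is that the upper bound on $\IFUT^{\leq r}$ comes from the classical literature, keeping the dynamical proof of Carlson's theorem self-contained relative to Hindman-type principles only.
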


We will write~$\IFUT^{\leq r}$ as a shorthand for the iterated Finite Union theorem for unions of at most~$r$ elements.
We need a notion of weak block sequence playing the same role as the notion of block sequence, but for located words instead of located variable words.

\begin{definition}
A \emph{weak block sequence} is a totally ordered set~$X \subseteq \FIN_A$.
Given a weak block sequence~$X$ and some~$r \in \NN$, we let
$$[X]^{\leq r}_A = \{ p_0 \cup \dots \cup p_{k-1} \in \FIN_A : k \leq r \wedge  p_0, \dots, p_{k-1} \in \FIN_A \}$$
\end{definition}

The following notion is an adaptation of the IP-limit to combinatorial spaces of located words.

\begin{definition}
Given~$f, g : \FIN_A \to C$ and an infinite weak block sequence~$X \subseteq \FIN_A$, we write $\fulim_{X}(f) = g$ if for every~$\ell \in \NN$, there is a finite set~$F \subseteq X$ such that for every~$p \in [X - F]^{\leq 2}_A$, $S^\ell_p(f) = S^\ell(g)$.	
\end{definition}

\begin{lemma}[$\RCA_0$]\label[lemma]{lem:vw-flim-strong-prox}
If~$g = \fulim_X(f)$ for an infinite weak block sequence $X \subseteq \FIN_A$ then~$g$ is weakly proximal to~$f$.
\end{lemma}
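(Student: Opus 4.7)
The plan is to unpack both definitions and show that a single element of $X$, chosen deep enough, will serve as the witness $p$ for weak proximality.

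Fix $\ell \in \NN$; we want a located word $p \in \FIN_A(\ell, \infty)$ with $S^\ell_p(g) = S^\ell_p(f)$. First, by the definition of $g = \fulim_X(f)$ applied at level $\ell$, I would obtain a finite $F_0 \subseteq X$ such that $S^\ell_{u}(f) = S^\ell(g)$ for every $u \in [X - F_0]^{\leq 2}_A$. Pick $q_1 \in X - F_0$ with $q_1 \in \FIN_A(\ell, \infty)$ (possible since $X$ is infinite). Using $q_1$ as a one-element union, this already gives $S^\ell_{q_1}(f) = S^\ell(g)$. The remaining task is therefore to show $S^\ell_{q_1}(g) = S^\ell(g)$; combining the two equalities then yields $S^\ell_{q_1}(f) = S^\ell_{q_1}(g)$, so $p := q_1$ works.

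To get $S^\ell_{q_1}(g) = S^\ell(g)$, set $\ell' = 1 + \max \dom q_1$ and invoke $g = \fulim_X(f)$ a second time at level $\ell'$: there is a finite $F_1 \subseteq X$ such that $S^{\ell'}_{u}(f) = S^{\ell'}(g)$ for every $u \in [X - F_1]^{\leq 2}_A$. Choose $q_2 \in X$ with $q_2 > q_1$ and $q_2 \notin F_0 \cup F_1$. Then for every $r \in \FIN_A(0, \ell)$, the chain of equalities
\[
g(r \cup q_1) \;=\; S^{\ell'}_{q_2}(f)(r \cup q_1) \;=\; f(r \cup q_1 \cup q_2) \;=\; S^\ell_{q_1 \cup q_2}(f)(r) \;=\; g(r)
\]
holds, where the first equality uses $q_2 \in [X - F_1]^{\leq 2}_A$ applied to the argument $r \cup q_1 \in \FIN_A(0, \ell')$, and the last uses $q_1 \cup q_2 \in [X - F_0]^{\leq 2}_A$. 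This is exactly $S^\ell_{q_1}(g) = S^\ell(g)$.

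There is no real obstacle here; the only thing to be careful about is matching the domain conditions so that $r \cup q_1$ indeed lies in $\FIN_A(0, \ell')$ (which is guaranteed by the choice $\ell' > \max \dom q_1$) and that $q_1 \cup q_2 \in [X - F_0]^{\leq 2}_A$ (guaranteed by $q_2 \notin F_0$ and $q_1 \in X - F_0$). The entire argument is carried out for an arbitrary fixed $\ell$, so no induction or additional set-existence axiom beyond $\RCA_0$ is needed, consistent with the stated base theory.
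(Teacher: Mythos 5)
Your proof is correct and follows essentially the same route as the paper: apply the $\fulim$ definition at level $\ell$ and again at level $1+\max\dom q_1$, then use a single element and a two-element union from $X$ minus the exceptional sets to equate $S^\ell_{q_1}(f)$ and $S^\ell_{q_1}(g)$ with $S^\ell(g)$. If anything, you are slightly more careful than the paper in stating the side conditions (that $q_1 \in \FIN_A(\ell,\infty)$ and that $q_2 > q_1$ avoids both $F_0$ and $F_1$), which the paper leaves implicit.
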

\begin{proof}
Suppose $g = \fulim_X(f)$. Given~$\ell \in \NN$, let~$F_0 \subseteq X$ be such that for every~$p \in [X - F_0]^{\leq 2}_A$, $S^\ell_p(f) = S^\ell(g)$. Fix~$p \in X - F_0$ and let~$m = 1 + \max p$. Let $F_1 \subseteq X$ be such that for every~$q \in [X - F_1]^{\leq 2}_A$, $S^m_q(f) = S^m(g)$. Fix~$q \in X - F_1$.
In particular, $S^\ell_{p \cup q}(f) = S^\ell_p(g)$. Since~$p, p \cup q \in [X - F_0]^{\leq 2}_A$, then $S^\ell_p(f) = S^\ell(g)$ and $S^\ell_{p \cup q}(f) = S^\ell(g)$. It follows that
$$
S^\ell_p(f) = S^\ell(g) = S^\ell_{p \cup q}(f) = S^\ell_p(g)
$$
In other words, $g$ is weakly proximal to~$f$.
\end{proof}

\begin{lemma}[$\RCA_0 + \IFUT^{\leq 2}$]\label[lemma]{lem:vw-fs2-limit-exists}
For every coloring~$f : \FIN_A \to C$ and every infinite weak block sequence~$X \subseteq \FIN_A$, there is an infinite weak block sequence~$Y \subseteq [X]_A$ such that $\fulim_{Y}(f)$ exists.
\end{lemma}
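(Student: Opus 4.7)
The plan is to reduce the problem to a direct application of $\IFUT^{\leq 2}$ in the partial semigroup $(\P_f(\NN), \cup)$, transferred through $X$. Enumerate $X = \{x_0 < x_1 < \dots\}$ and define the map $\iota : \P_f(\NN) \to [X]_A$ by $\iota(\{n_0 < \dots < n_{k-1}\}) = x_{n_0} \cup \dots \cup x_{n_{k-1}}$. This is a bijection onto $[X]_A$ and a morphism of partial semigroups, sending infinite block sequences in $\P_f(\NN)$ to infinite weak block sequences in $[X]_A$ and sending $\FU^{\leq 2}$ onto $[\cdot]^{\leq 2}_A$.

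For each $\ell \in \NN$, introduce the coloring $\phi_\ell : \P_f(\NN) \to D_\ell$, where $D_\ell = C^{\FIN_A(0,\ell)} \cup \{\star\}$ is finite, defined by $\phi_\ell(F) = S^\ell_{\iota(F)}(f)$ whenever $\min \dom \iota(F) \geq \ell$ and $\phi_\ell(F) = \star$ otherwise. Apply $\IFUT^{\leq 2}$ to the trivial block sequence $\{\{n\} : n \in \NN\}$ (whose $\FU$ is $\P_f(\NN)$) and to the sequence $(\phi_\ell)_{\ell \in \NN}$: this produces an infinite block sequence $Z \subseteq \P_f(\NN)$ together with finite sets $G_\ell \subseteq Z$ such that $\FU^{\leq 2}(Z - G_\ell)$ is $\phi_\ell$-homogeneous, say of value $v_\ell \in D_\ell$. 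Since $\min \dom \iota(z_n) \to \infty$, by enlarging each $G_\ell$ into a nested sequence of finite sets $G'_\ell \supseteq G_\ell$, we can arrange that every $F \in \FU^{\leq 2}(Z - G'_\ell)$ satisfies $\min \dom \iota(F) \geq \ell$, so in particular $v_\ell \in C^{\FIN_A(0,\ell)}$.

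Let $Y = \{\iota(z) : z \in Z\}$, an infinite weak block sequence in $[X]_A$, and define $g : \FIN_A \to C$ by $g(q) = v_\ell(q)$ for any $\ell > \max \dom q$. The consistency of this definition follows from the nesting: for $\ell_1 \leq \ell_2$ and $q \in \FIN_A(0, \ell_1)$, any common witness $F \in \FU^{\leq 2}(Z - G'_{\ell_2}) \subseteq \FU^{\leq 2}(Z - G'_{\ell_1})$ gives $v_{\ell_1}(q) = f(q \cup \iota(F)) = v_{\ell_2}(q)$. With this definition, $S^\ell(g) = v_\ell$ by construction, and for every $p \in [Y - \iota(G'_\ell)]^{\leq 2}_A$ we have $S^\ell_p(f) = v_\ell = S^\ell(g)$, witnessing $\fulim_Y(f) = g$. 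The main technical obstacle is precisely the coherence of the $v_\ell$'s across $\ell$, which is resolved by the nesting of the $G'_\ell$'s; beyond this, the argument is a direct translation through $\iota$ of the instance of $\IFUT^{\leq 2}$ applied to the finite colorings $\phi_\ell$.
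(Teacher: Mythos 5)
Your argument is essentially the paper's proof. The paper also transfers the problem through the canonical bijection $\pi : \FU(B) \to [X]_A$ (with $B = \{\dom p : p \in X\}$), applies $\IFUT^{\leq 2}$ to a countable family of colorings recording the factors $S^\ell_{\pi(F)}(f)$, and reads off the coherent limit $g$ from the homogeneous values, exactly as you do. The only cosmetic difference is that you use, for each $\ell$, a single coloring $\phi_\ell$ with the growing finite range $C^{\FIN_A(0,\ell)} \cup \{\star\}$, whereas the paper uses the family of $2$-colorings $C_h = \{F : S^\ell_{\pi(F)}(f) = h\}$, one for each candidate factor $h$. Since the paper's statement of $\IFUT^{\leq 2}$ fixes the finite color set $C$, your variant should strictly speaking be reduced to $2$-colorings (replace each $\phi_\ell$ by the indicator colorings of its finitely many possible values and take a common finite witness); this is routine, and your $\star$-padding is in fact a slightly cleaner way of handling the located words whose domain meets $[0,\ell)$ than the paper's implicit convention.

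One point is glossed over and matters for the claimed base theory $\RCA_0 + \IFUT^{\leq 2}$: to define $g$ you need the nested sequence $(G'_\ell)_{\ell \in \NN}$ (equivalently, the sequence of homogeneity witnesses $(G_\ell)_\ell$, or of values $(v_\ell)_\ell$) to exist as a single second-order object, while the conclusion of $\IFUT^{\leq 2}$ only asserts, for each $\ell$ separately, that \emph{some} finite witness exists. The property ``$\FU^{\leq 2}(Z - G)$ is $\phi_\ell$-homogeneous'' is $\Pi^0_1$, so selecting a witness for each $\ell$ is not a $\Delta^0_1$ definition and is not available outright in $\RCA_0$; likewise $g$ itself is only $\Delta^0_2$-definable from $Z$ and $f$ a priori. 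The paper dispatches this by observing that $\RCA_0 + \IFUT^{\leq 2}$ proves $\ACA_0$, so the witness sequence, and hence $g$ and $Y$, exist; add that remark (or an equivalent justification) and your proof is complete.
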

\begin{proof}
Let~$B = \{ \dom p : p \in X \}$. There exists a canonical bijection $\pi : \FU(B) \to [X]_A$.

For every $\ell \in \NN$ and finite coloring~$h : \FIN_A(0, \ell) \to C$, let $C_h = \{ F \in \FU(B) : S^\ell_{\pi(F)}(f) = h \}$.
Thus $\langle C_h : h \in \bigcup_\ell C^{\FIN_A(0, \ell)} \rangle$ is a countable sequence of 2-colorings of~$\FU(B)$.
By $\IFUT^{\leq 2}$, there is an infinite set~$D \subseteq \FU(B)$ such that
for every $h : \FIN_A(0, \ell) \to C$, there is a finite set~$G_h \subseteq D$ such that $\FU^{\leq 2}(D - G_h)$ is homogeneous for~$C_h$. Moreover, the sequence $\langle G_h : h \in \bigcup_\ell C^{\FIN_A(0, \ell)} \rangle$ exists by~$\ACA_0$, which follows from $\RCA_0 + \IFUT^{\leq 2}$.

We claim that for every~$\ell \in \NN$, there is at most one $h : \FIN_A(0, \ell) \to C$ such that
$\FU^{\leq 2}(D - G_h) \subseteq C_h$. Indeed, if there is are $h_0, h_1 : \FIN_A(0, \ell) \to C$ such that $\FU^{\leq 2}(D - G_{h_i}) \subseteq C_{h_i}$ for each~$i < 2$, then pick some~$F \in D - (G_F \cup G_H)$. Then $F \in C_{h_0} \cap C_{h_1}F$, so by definition of~$C_{h_i}$, $S^\ell_{\pi(F)}(f) = h_i$, so $h_0 = h_1$.

For every~$\ell \in \NN$, let~$h_\ell : \FIN_A(0, \ell) \to C$ witness the claim. Note that $h_\ell \subseteq h_{\ell+1}$. The sequence $\langle h_\ell : \ell \in \NN  \rangle$ can obtained computably from the sequence $\langle G_h : h \in \bigcup_\ell C^{\FIN_A(0, \ell)} \rangle$. Let~$g : \FIN_A \to C$ be such that $S^\ell(g) = h_\ell$ for every~$\ell \in \NN$. Let~$Y = \{\pi(F) : F \in D \}$. Note that $Y \subseteq [X]_A$ is a weak block sequence. By construction, $\fulim_{Y}(f) = g$.
\end{proof}

\begin{lemma}[$\RCA_0 + \IFUT^{\leq 2}$]\label[lemma]{lem:vw-fs2-ur-limit-exists}
	For every coloring~$f : \FIN_A \to C$ and every~$g \in \overline{\Orb(f)}$, there is a coloring~$h \in \overline{\Orb(g)}$ such that $h = \fulim_X(f)$ for some infinite weak block sequence~$X \subseteq \FIN_A$.
\end{lemma}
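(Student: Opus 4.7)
The plan is to apply Lemma~\ref{lem:vw-fs2-limit-exists} to $f$ starting from a carefully constructed weak block sequence $X_0 \subseteq \FIN_A$ all of whose finite unions have their $f$-shifts landing among the factors of $g$. For each $m \in \NN$, let $\Gamma^m := \{S^m_q(g) : q \in \FIN_A(m, \infty)\}$, a finite subset of $C^{\FIN_A(0, m)}$ available in~$\ACA_0$ (which is implied by~$\RCA_0 + \IFUT^{\leq 2}$). The hypothesis $g \in \overline{\Orb(f)}$ says precisely that for every $m$ and every $q \in \FIN_A(m, \infty)$, some $p \in \FIN_A(m, \infty)$ satisfies $S^m_p(f) = S^m_q(g) \in \Gamma^m$.

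I first construct by induction a weak block sequence $X_0 = \{p_0 < p_1 < \dots\} \subseteq \FIN_A$ together with integers $N_0 \leq N_1 \leq \dots$ satisfying $N_n \geq n$, $N_n > \max \dom p_{n-1}$, and $\min \dom p_n \geq N_n$, while maintaining the invariant that for every non-empty $S \subseteq \{0, \dots, n\}$, writing $p_S := \bigcup_{i \in S} p_i$ and $j := \min S$, one has $S^{N_j}_{p_S}(f) \in \Gamma^{N_j}$. At step $n$, set $N_n := \max(n, 1 + \max \dom p_{n-1})$, pick any $q_n \in \FIN_A(N_n, \infty)$, and invoke $g \in \overline{\Orb(f)}$ to obtain $p_n \in \FIN_A(N_n, \infty)$ with $S^{N_n}_{p_n}(f) = S^{N_n}_{q_n}(g)$. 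For any $S$ containing $n$, set $S' := S \setminus \{n\}$ and $j := \min S$; every $s \in \FIN_A(0, N_j)$ satisfies $s \cup p_{S'} \in \FIN_A(0, N_n)$, so the orbit equation at depth $N_n$ forces $f(s \cup p_{S'} \cup p_n) = g(s \cup p_{S'} \cup q_n)$, giving $S^{N_j}_{p_S}(f) = S^{N_j}_{p_{S'} \cup q_n}(g) \in \Gamma^{N_j}$.

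Next, apply Lemma~\ref{lem:vw-fs2-limit-exists} to $f$ with initial weak block sequence $X_0$ to obtain an infinite weak block sequence $X \subseteq [X_0]_A$ with $h := \fulim_X(f)$. Each $y \in X$ has the form $p_S$ for some non-empty finite $S \subseteq \NN$, and the weak block property forces the values $\min S$ to tend to infinity along the enumeration of $X$. Given $n \in \NN$, enlarge the $\fulim$-witness $F$ to contain all $y = p_S \in X$ with $\min S < n$: every $p \in [X - F]^{\leq 2}_A$ then has the form $p_T$ with $\min T \geq n$, so by the invariant $S^{N_{\min T}}_p(f) \in \Gamma^{N_{\min T}}$, and restricting to $\FIN_A(0, n) \subseteq \FIN_A(0, N_{\min T})$ gives $S^n_p(f) \in \Gamma^n$. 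Hence $S^n(h) \in \Gamma^n$ for every $n$. To conclude $h \in \overline{\Orb(g)}$, fix $\ell \in \NN$ and $q \in \FIN_A(\ell, \infty)$, and let $m := 1 + \max \dom q$: since $S^m(h) \in \Gamma^m$, there is $q' \in \FIN_A(m, \infty)$ with $S^m(h) = S^m_{q'}(g)$, and for any $s \in \FIN_A(0, \ell)$, $h(s \cup q) = g(s \cup q \cup q')$, so $S^\ell_q(h) = S^\ell_{q \cup q'}(g) \in \Gamma^\ell$.

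The main obstacle is Step~1: while it is straightforward to secure the ``factor-of-$g$'' property for individual $p_n$, one must simultaneously ensure it for every finite union $p_S$ of previously constructed elements. This is resolved by choosing $N_n$ above $\max \dom p_{n-1}$, so that the orbit equation at depth $N_n$ applied to the test input $s \cup p_{S'}$ transfers cleanly across all earlier subsets $S' \subseteq \{0, \dots, n-1\}$.
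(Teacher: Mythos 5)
Your proof is correct, and it implements the shared skeleton (build a weak block sequence adapted to $g$, apply \Cref{lem:vw-fs2-limit-exists}, then verify that the limit lies in $\overline{\Orb(g)}$) by a genuinely different mechanism than the paper. The paper fixes a leafless tree code $T$ for the subshift $\overline{\Orb(g)}$ and, at each stage, uses a compactness/open-cover argument (the classes $\W_{\ell,p}$ and the basic open sets coming from the levels of $T$) to choose the next block so that all sufficiently large shifts of $f$ along the generated combinatorial space are nodes of $T$; the limit $h=\fulim_X(f)$ is then a path of $T$, i.e.\ an element of $\overline{\Orb(g)}$. You instead choose each $p_n$ at depth $N_n>\max\dom p_{n-1}$ so that $S^{N_n}_{p_n}(f)$ is literally a factor of $g$, and your key point is that the single equation $S^{N_n}_{p_n}(f)=S^{N_n}_{q_n}(g)$, evaluated at inputs of the form $s\cup p_{S'}$, simultaneously controls all unions $p_S$ with $\max S=n$; from $S^n(h)\in\Gamma^n$ for all $n$ you then correctly deduce that every shifted factor $S^\ell_q(h)$ equals $S^\ell_{q\cup q'}(g)$, hence $h\in\overline{\Orb(g)}$. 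Your route avoids forming the $\Pi^0_1(g')$ tree code and the compactness step altogether (the sequence $X_0$ is computable from $f\oplus g$), and it also sidesteps the paper's case split $f\neq g$ (used there to guarantee nonempty witnesses), since your witnesses lie in $\FIN_A(N_n,\infty)$ and are nonempty by definition; the paper's argument, in exchange, lands directly on the paths of $T$ and so needs no final ``prefix factors imply orbit-closure membership'' observation. The appeal to $\IFUT^{\leq 2}$ is identical in both proofs (only through \Cref{lem:vw-fs2-limit-exists}), so the axiomatic cost is the same, and your use of $\ACA_0$ to form $\Gamma^m$ is harmless and in fact avoidable, since $\Gamma^m$ only serves as shorthand for the $\Sigma^0_1$ condition ``there exists $q'$ with $S^m_{\cdot}(f)=S^m_{q'}(g)$''.
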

\begin{proof}
Fix~$f$ and $g$. Assume $f \neq g$, otherwise the result follows from \Cref{lem:vw-fs2-limit-exists} directly.

For every~$\ell \in \NN$, let~$\V_\ell = C^{\FIN_A(0, \ell)}$, and let~$\V = \bigcup_\ell \V_\ell$.
Let~$T \subseteq \V$ be a code for the subshift $\overline{\Orb(g)}$ such that~$T$ has no leaves. For each~$\ell \in \NN$, let~$T_\ell = T \cap \V_\ell$. Let~$\U_\ell$ be the open class induced by the leaves of~$T_\ell$. Last, for every~$p \in \FIN_A(\ell, \infty)$,
let~$\W_{\ell, p} = \{ h \in C^{\FIN_A} : S^\ell_p(h) \in T_\ell \}$.
Note that for all $\ell \in \NN$ and $p \in \FIN_A(\ell, \infty)$, $\overline{\Orb(g)} \subseteq \W_{\ell, p}$ and $\W_{\ell, p}$ is open.

Define the weak block sequence~$p_0 < p_1 < \dots$ inductively as follows. Let~$p_0 \in \FIN_A$ be arbitrary.
Given~$p_\ell$ defined, with~$m_\ell = 1 + \max \dom p_\ell$, since~$\bigcap_{p \in \FIN_A(0, m_\ell)} \W_{\ell,p}$ is an open cover of $\overline{\Orb(g)}$, there is some bound~$b \in \NN$ such that $\U_b \subseteq \bigcap_{p \in \FIN_A(0, m_\ell)} \W_{\ell,p}$.
By definition of~$T_b$, since~$g \in \Orb(g)$, there is some~$h \in T_b$ be such that $S^b(g) = h$. In particular, $[h] \subseteq \U_b$.
 Since~$g \in \overline{\Orb(f)}$, then there is some located word~$p_{\ell+1} \in \FIN_A(b, \infty)$ such that $S^b_{p_{\ell+1}}(f) = h$. Moreover, since $f \neq g$, the located word~$p_{\ell+1}$ can be taken to be non-empty.
In particular, $[S^b_{p_{\ell+1}}(f)] \subseteq \U_b \subseteq \bigcap_{p \in \FIN_A(0, m_\ell)} \W_{\ell,p}$.
Thus for each~$\ell \in \NN$, $S^\ell_p(f) \in T_\ell$ for all sufficiently large~$p \in [p_0, p_1, \dots]_A$.

By \Cref{lem:vw-fs2-limit-exists}, there is an infinite weak block sequence~$X \subseteq [p_0, p_1, \dots]_A$ such that $h =  \fulim_X(f)$ exists. Then $h \in \bigcap_\ell \U_\ell = \overline{\Orb(Y)}$.
\end{proof}

We are now ready to prove the Auslander-Ellis theorem for~$C^{\FIN_A}$ from the iterated Finite Union theorem.

\begin{proof}[Proof of~\Cref{thm:aet-for-words} over $\RCA_0 + \IFUT^{\leq 2}$]
Let~$f : \FIN_A \to C$ be a coloring. By \Cref{lem:words-subshift-minimal-subshift}, let~$g \in \overline{\Orb(f)}$ be such that $\overline{\Orb(g)}$ is minimal. By \Cref{lem:vw-fs2-ur-limit-exists},
there is a coloring~$h \in \overline{\Orb(g)}$ and an infinite weak sequence~$X \subseteq \FIN_A$ such that $h = \fulim_X(f)$. By \Cref{lem:vw-flim-strong-prox}, $h$ is weakly proximal to~$f$. Moreover, $\overline{\Orb(h)} = \overline{\Orb(h)}$ by minimality, hence~$h$ is uniformly recurrent by \Cref{lem:words-minimal-subshift-ur}.
\end{proof}

This gives us in particular a proof of Carlson's theorem from the iterated Finite Union theorem for unions of at most 2 elements.

\begin{proof}[Proof of~\Cref{thm:carlson-theorem} over $\RCA_0 + \IFUT^{\leq 2}$]
Let~$f : \FIN_A \to C$ be a coloring. By \Cref{thm:aet-for-words}, there is a coloring $g$ which is uniformly recurrent, and weakly proximal to~$f$. By \Cref{lem:vw-ur-wp-strongly-proximal}, $g$ is strongly proximal to~$f$. By \Cref{lem:vw-strong-prox-hom}, there is an infinite block sequence~$X \subseteq \FIN_{A \varp}$ such that $[X]_A$ is $f$-homogeneous.
\end{proof}

\begin{corollary}\label[corollary]{cor:bounded-fut-implies-fut}
$\RCA_0 \vdash \IFUT^{\leq 2} \rightarrow \FUT$
\end{corollary}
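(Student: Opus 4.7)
The plan is to derive $\FUT$ as the one-letter specialization of Carlson's theorem for located words, which we have just established over $\RCA_0 + \IFUT^{\leq 2}$ in the proof of \Cref{thm:carlson-theorem}. Fix a singleton alphabet $A = \{a\}$. Since $|A| = 1$, for any $p \in \FIN_{A \varp}$ the located word $p[a]$ is the unique total function $\dom p \to \{a\}$, so the map $p \mapsto \dom p$ gives a natural bijection between $\FIN_A$ and $\P_f(\NN)$. Under this bijection the partial order $<$ on $\FIN_A$ corresponds to the block ordering on $\P_f(\NN)$, and the $\cup$ operation on located words corresponds to union of sets.

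Given an arbitrary coloring $f : \P_f(\NN) \to C$, I would lift it to $g : \FIN_A \to C$ by setting $g(p) = f(\dom p)$. Applying the proof of \Cref{thm:carlson-theorem} (formalized in $\RCA_0 + \IFUT^{\leq 2}$) to $g$ produces an infinite block sequence $X \subseteq \FIN_{A \varp}$ such that $[X]_A$ is $g$-homogeneous. Taking $Y = \{\dom p : p \in X\}$, the observations of the preceding paragraph show that $Y$ is an infinite block sequence in $\P_f(\NN)$ and that $\FU(Y) = \{\dom q : q \in [X]_A\}$, so $\FU(Y)$ is $f$-homogeneous. This yields $\FUT$.

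There is no real obstacle: the entire combinatorial content was already packaged into the proof of \Cref{thm:carlson-theorem} over $\RCA_0 + \IFUT^{\leq 2}$, and this corollary amounts to recognizing $\FUT$ as the $|A|=1$ instance of Carlson's theorem for located words. The only minor point to verify is that the specialization reduction goes through in $\RCA_0$, which is immediate since the bijection $p \mapsto \dom p$ and the coloring $g$ are both computable from the original data.
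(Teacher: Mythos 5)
Your proof is correct and follows essentially the same route as the paper: both derive $\FUT$ by invoking Carlson's theorem for located words, which was just established over $\RCA_0 + \IFUT^{\leq 2}$. The only difference is that you spell out the reduction (via the one-letter alphabet and the bijection $p \mapsto \dom p$) that the paper treats as immediate, which is a harmless and correct elaboration.
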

\begin{proof}
Immediate, since $\RCA_0 + \IFUT^{\leq 2}$ proves Carlson's theorem for located words (\Cref{thm:carlson-theorem}), which itself implies the Finite Union theorem.
\end{proof}

It is a major open question is combinatorics whether Hindman's theorem for bounded sums implies Hindman's theorem (see Hindman, Leader and Strauss~\cite[Question 12]{hindman2003open}).
\Cref{cor:bounded-fut-implies-fut} can be seen as a partial negative answer.

\section{Open questions}\label[section]{sect:open-questions}

There are many remaining open questions in the reverse mathematics of Ramsey's theory. We mention a few of them. The first one, maybe the most important, is the following.

\begin{question}
Does (iterated) Hindman's theorem hold in~$\ACA_0$?
\end{question}

We have seen that the iterated Finite Union theorem implies Carlson's theorem for located words.
However, the proof does not seem to be adaptable to replace $\IFUT$ with its non-iterated version.

\begin{question}
Does Hindman's theorem imply Carlson's theorem for (located) words?
\end{question}

Last, as mentioned in the introduction, Carlson proved a stronger statement about variable words, which has no known elementary proof.

\begin{question}
What are the reverse mathematics of Carlson's theorem for variable (located) words?	
\end{question}

\section*{Acknowledgement}

Patey is partially supported by grant ANR ``ACTC'' \#ANR-19-CE48-0012-01.

\bibliographystyle{plain}
\bibliography{bibliography}

\end{document}